\newtheorem{thm}{Theorem}
\newtheorem{lem}[thm]{Lemma}
\newtheorem{cor}[thm]{Corollary}
\newtheorem{prop}[thm]{Proposition}
\theoremstyle{definition}
\newtheorem{rem}[thm]{Remark}
\newcommand{\ind}{\hspace{4ex}}
\begin{document}

\title[Majority Bootstrap Percolation on $G(n,p)$]{Majority Bootstrap Percolation on $G(n,p)$}

\date{\today}

\date{\today}

\author[C.~Holmgren]{Cecilia Holmgren$^\natural$}
\address{$^\natural$ Department of Mathematics, Uppsala University, SE-75310 Uppsala, Sweden and Department of Pure Mathematics and Mathematical Statistics, University of Cambridge, Cambridge CB3 0WB, UK.. 
 Supported by the Swedish Research Council.}
\email{cecilia.holmgren@math.uu.se}

\author[T.~Ju\v{s}kevi\v{c}ius]{Tomas Ju\v{s}kevi\v{c}ius$^\dag$}
\address{$^\dag$Department of Mathematical Sciences, University of Memphis, Memphis, TN 38152 USA. 
}
\email{tomas.juskevicius@gmail.com}

\author[N.~Kettle]{Nathan Kettle$^\ddag$}
\address{$^\ddag$Department of Pure Mathematics and Mathematical Statistics, University of Cambridge, Cambridge CB3 0WB, UK. 
}
\email{nathan.kettle@cantab.net}

\begin{abstract}
Majority bootstrap percolation on a graph $G$ is an epidemic process defined in the following manner. Firstly, an initially infected set of vertices is selected. Then step by step the vertices that have more infected than non-infected neighbours are infected. We say that percolation occurs if eventually all vertices in $G$ become infected.\\

In this paper we study majority bootstrap percolation on the Erd\H{o}s-R\'{e}nyi random graph $G(n,p)$ above the connectivity threshold. Perhaps surprisingly, the results obtained for small $p$  are comparable to the results for the hypercube obtained by  Balogh, Bollob\'as and Morris \cite{balogh2009majority}.
\end{abstract}

\subjclass[2010]{60C05; 05C80; 60K35}

\keywords{bootstrap percolation, Erd\H{o}s-R\'{e}nyi random graph, threshold}

\maketitle

\section{Introduction}

\ind The classical bootstrap percolation, called $r$-neighbour bootstrap percolation, concerns a deterministic process on a graph. Firstly, a subset of the vertices of a graph $G$ is initially infected. Then at each time step the infection spreads to any vertex with at least $r$ infected neighbours. This process is a cellular automaton, of the type first introduced by von Neumann in \cite{von1966theory}. This particular model was introduced by Chalupa, Leith and Reich in \cite{chalupa1979bootstrap}, where $G$ was taken to be the Bethe lattice.

\ind A standard way of choosing the initially infected vertices is to independently infect each vertex with probability $p$. The probability that the entire graph eventually becomes infected is increasing with $p$. It is therefore sensible to study the quantity $p_{c}=\inf\{p:\mathbb{P}_p(G\mbox{ infected})\geq c\},$ in particular the critical probability $p_{1/2}$ and the size of the critical window $p_{1-\epsilon}-p_{\epsilon}$.

\ind A natural setting for this problem is the finite grid $[n]^d$. Many of the results on bootstrap percolation concern this problem. The first to study this graph were Aizenman and Lebowitz in \cite{aizenman1988metastability}, who showed that in 2-neighbour bootstrap percolation when $d$ is fixed we have $p_{1/2}=\Theta ((\log n)^{1-d})$. 

\ind The $r$-neighbour bootstrap percolation process has also been studied on the random regular graph by Balogh in \cite{balogh2007bootstrap} and on the Erd\H{o}s-R\'{e}nyi random graph $G(n,p)$ by Janson, {\L}uczak, Turova and Vallier in \cite{janson2012bootstrap}.

\ind In majority bootstrap percolation a vertex becomes infected if a majority of its neighbours are. In \cite{balogh2009majority} Balogh, Bollob\'as and Morris studied this process on the hypercube and showed that if the vertices of the $n$-dimensional hypercube are independently infected with probability 
$$q=\frac{1}{2}-\frac{1}{2}\sqrt{\frac{\log n}{n}}+\frac{\lambda\log\log n}{\sqrt{n\log n}},$$
then, with high probability, percolation occurs (i.e., all vertices eventually become infected) if $\lambda>\frac{1}{2}$ and does not occur if $\lambda\leq-2$. 

\ind In this paper we shall study majority bootstrap percolation on the Erd\H{o}s-R\'{e}nyi random graph $G(n,p)$ above the connectivity threshold.
We will see that for small $ p $ our results are in fact comparable to the results for the hypercube in \cite{balogh2009majority}, noting that the degree for each vertex in the $ n $-dimensional hypercube (with $ 2^{n} $ vertices) is equal to $ n $.

\section{Main Results} 

\ind In this section we shall state our main results and discuss two different ways of selecting the initially infected set. 
The proofs of these theorems (in Section \ref{upper} and Section \ref{lower}) use inequalities that are described separately in Section \ref{ineqineq}. 

\ind For  a graph $G$ with some subset $I_0\subset V(G)$ of initially infected vertices, the majority bootstrap process on $G$ is defined by setting $I_{t+1}=I_t\cup\{v\in V(G):|I_t \cap \Gamma(v)|\geq\frac{|\Gamma(v)|}{2}\},$ where $ \Gamma(v) $ is the neighbourhood of $ v $. For a finite graph $G,$ this process will terminate with $I_{T+1}=I_T.$ Denote by $I=I_T$ the set of eventually infected vertices.  

\ind We shall look at the case of $G=G(n,p),$ the graph on $n$ vertices, where each edge is included independently with probability $p$. Often $ p:=p(n)\rightarrow \infty$ as $ n\rightarrow \infty $, but we use the standard notation to just write $ p $ also for functions depending on $ n $.
 Our initial setup is slightly different than for the hypercube mentioned above, instead of infecting each vertex independently with some probability $q$, we shall infect a random set of vertices of size $m:=m(n)$.

\ind In the normal setup for the majority bootstrap process on $G(n,p)$, we would first choose the edges of $G(n,p)$, and then choose an initially infected set $I_0$ uniformly from $[n]^{(m)}.$ As these two choices are independent we shall equivalently set $I_0=[m]$, and then choose the edges of $G(n,p)$. This is the $MB(n,p\ ;m)$ process.

\ind We now introduce some notation that shall be used. We use the standard asymptotic little-$o$ notation and this is always taken as $n$ or $N$ tends to infinity, i.e., if $ (b_n) $ is a sequence of numbers, we say that $ b_n=o(a_n) $ if $ b_n/a_n\rightarrow 0 $, as $ n\rightarrow \infty $.
   We set $d=\frac{np}{1-p}$, thus $d$ is roughly the average degree in $G(n,p)$ for $p=o(1)$.  We denote the binomial distribution with parameters $n$ and $p$ by $B(n,p)$. We shall sometimes abuse the notation and denote by $B(n,p)$ a random variable that has a binomial distribution. We reserve $m$ for the size of $I_0$ and shall always assume that   $$m=\frac{n}{2}-\frac{n}{2}\sqrt{\frac{\log d}{d}}+\lambda n\frac{\log \log \log d}{\sqrt{d\log d}}+o\left(n\frac{\log \log \log d}{\sqrt{d\log d}}\right),$$ for some constant $\lambda$.  We also use the standard notation that an event $ E_n $ holds \emph{with high probability}, i.e., for the event $ E_n $ it holds that $\mathbb{P}(E_n)\rightarrow 1$, as $ n\rightarrow \infty $.
Let $\omega(n)$ denote some arbitrary positive function that is increasing and unbounded, as $ n $ tends to infinity.
  
  The inequalities below are only claimed to be true for $n$ large enough. For the $MB(n,p\ ;m)$ process, define 
$$\mathcal{P}_{m}\left(G(n,p)\right)=\mathbb{P}\left(I=[n]\right).$$

\ind We shall now state the main result of this paper.

\begin{thm}
\label{MajorBoot:mainthm}
Fix some number $\epsilon>0$. Assume that for $ n $ large enough,
\begin{equation*}
(1+\epsilon)\log n\leq p(1-p)n. 
\end{equation*}
If the initially infected set $I_{0}$ has size 
\begin{equation*} 
m=\frac{n}{2}-\frac{n}{2}\sqrt{\frac{\log d}{d}}+\lambda n\frac{\log \log \log d}{\sqrt{d\log d}}+o\left(n\frac{\log \log \log d}{\sqrt{d\log d}}\right),
\end{equation*}
then
\begin{equation*}
\qquad \mathcal{P}_m\left(G(n,p)\right) \stackrel{n\rightarrow \infty}{\longrightarrow}
\begin{cases}
1, \,\,\, \text{if}\,\,\, \lambda > \frac{1}{2}, \\
0, \,\,\,  \text{if}\,\,\, \lambda <0.
\end{cases}
\end{equation*}

\end{thm}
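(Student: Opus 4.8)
The plan is to follow the increasing chain $I_0\subseteq I_1\subseteq\cdots$ and to show that, with high probability, the number of vertices it picks up at each step obeys a simple deterministic recursion which can then be analysed directly; the sign of $\lambda$ will decide whether this recursion decays or explodes.

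\textbf{Reduction and state variable.} Since $I_0=[m]$ is fixed, to determine which $v\notin[m]$ join $I_1$ one only needs, for each such $v$, its edges to $[m]$ and its edges into $B:=[n]\setminus[m]$; once all edges meeting $B$ have been exposed, $I_2,I_3,\dots$ are determined and the edges inside $[m]$ are irrelevant. Thus the process is a function of the independent random bipartite graph between $[m]$ and $B$ and of $G(n,p)$ restricted to $B$. For $v\notin I_t$ let $\delta_t(v)=\lceil|\Gamma(v)|/2\rceil-|\Gamma(v)\cap I_t|$ be the \emph{deficit} of $v$; then $\delta_t(v)>0$, the vertex $v$ is infected at the first step $t+1$ with $\delta_t(v)\le 0$, and $\delta_t(v)-\delta_{t+1}(v)=|\Gamma(v)\cap(I_{t+1}\setminus I_t)|$. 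Up to rounding, $\delta_0(v)$ is one half of $B(n-1-m,p)-B(m,p)$ with the two binomials independent, hence concentrated about $\mu:=\tfrac12(n-2m)p$ with standard deviation $\sigma:=\tfrac12\sqrt{(n-1)p(1-p)}$, and a short computation using the formula for $m$ gives $(\mu/\sigma)^2=\log d-4\lambda\log\log\log d+o(1)$. Writing $\phi,\Phi$ for the standard normal density and distribution function, the local-CLT type estimates of Section \ref{ineqineq} show that the number $N_k$ of $v\in B$ with $\delta_0(v)=k$ equals, uniformly for $1\le k=o(\sigma)$, the constant $\rho:=(n-m)\sigma^{-1}\phi(\mu/\sigma)\,(1+o(1))$, while $s_1:=|I_1\setminus I_0|=(n-m)\Phi(-\mu/\sigma)\,(1+o(1))$.

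\textbf{The recursion.} Put $s_t:=|I_t\setminus I_{t-1}|$ and $g_t:=p\,s_t$, the typical number of freshly-infected neighbours a still-uninfected vertex gains at step $t$. A vertex joins $I_{t+1}$ exactly when its deficit falls in the window $(\,g_1+\cdots+g_{t-1},\ g_1+\cdots+g_t\,]$, which has length $g_t$; as long as the cumulative gain $g_1+\cdots+g_t$ stays $o(\sigma)$ the deficit density on that window is $\approx\rho$, and one is led to
\begin{equation*}
s_{t+1}\ \approx\ \rho\,g_t\ =\ \rho p\,s_t\ =\ \beta\,s_t,\qquad \beta\ :=\ \frac{(\log\log d)^{2\lambda}}{\sqrt{2\pi}}\,\bigl(1+o(1)\bigr),
\end{equation*}
where $\rho p=\tfrac{(\log\log d)^{2\lambda}}{\sqrt{2\pi}}(1+o(1))$ follows from the formula for $\rho$, the estimate for $(\mu/\sigma)^2$, and the identity $(1-p)d=np$. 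For the non-percolation direction ($\lambda<0$) one has $\beta\to0$, and the plan is to turn the heuristic into a genuine upper bound $s_{t+1}\le\beta' s_t$ with a fixed $\beta'<1$ — using $\sum_{j\ge1}\mathbb P(B(s_t,p)\ge j)=g_t$ to bound the expected number of new infections, concentration to make the bound hold with high probability, and a union bound over the (few) relevant steps. Then $\sum_t s_t=O(s_1)=o(n)$, so the cumulative gain never reaches the scale $\sigma$ at which the recursion would change character, the process halts, and the final uninfected set has size $(n-m)-\sum_t s_t=(\tfrac12+o(1))n>0$; hence $\mathcal P_m(G(n,p))\to0$. For the percolation direction ($\lambda>\tfrac12$) one runs the mirror argument: first $s_1\to\infty$, then $s_{t+1}\ge\beta'' s_t$ for some fixed $\beta''>1$ — and this is exactly where $\lambda>\tfrac12$ is spent, the room over the presumably optimal $\lambda>0$ absorbing the multiplicative errors in the recursion and the union bound over the polynomially many steps in play. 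Geometric growth pushes the cumulative gain past $\sigma$; beyond that point the deficit density at the advancing frontier climbs towards its peak of order $n/((1-p)\sqrt d)$, the effective multiplier grows to order $\sqrt d\gg 1$, and $|[n]\setminus I_t|$ collapses, first to a sublinear set and then (using that $G(n,p)$ above the connectivity threshold is near-regular and that, with high probability, no small vertex set contains half the neighbourhood of any vertex) to the empty set, so $\mathcal P_m(G(n,p))\to1$.

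\textbf{Main obstacle.} The difficulty hidden by the tidy recursion is the dependency structure: the vertices uninfected at time $t$ are conditioned on having survived every earlier step, and — because determining $I_1,\dots,I_t$ already exposes the edges between $B$ and $I_t\setminus I_{t-1}$ — this conditioning is correlated with precisely the edges that drive the next step, so "a survivor gains $B(s_t,p)$ new infected neighbours" is neither exactly true nor independent across vertices and steps. Making this rigorous requires either a lazy, step-by-step exposure that keeps the decisive edges fresh (or a sprinkling split $p=p_1+p_2-p_1p_2$ with a reserved second layer), or direct concentration control of the evolving empirical deficit distribution via martingale inequalities, in either case leaning on the estimates collected in Section \ref{ineqineq}. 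Since the whole phenomenon sits at the scale of the $(\log\log\log d)$-correction to $m$, every approximation must be carried to that precision, which is the reason the percolation side is established only for $\lambda>\tfrac12$ rather than down to $\lambda>0$.
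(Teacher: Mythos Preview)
Your proposal takes a genuinely different route from the paper. The paper never tracks the dynamics $s_t=|I_t\setminus I_{t-1}|$ beyond the first step. For $\lambda>\tfrac12$ it computes only $s_1$ (your Lemma~\ref{MajorBoot:upbstep1} calculation, via Chebyshev), and then switches to a \emph{static} argument: if percolation fails, $I_0$ lies in a closed set, and the paper rules out closed sets of every size by (i) first-moment on medium-sized closed sets containing $I_0$ (Lemma~\ref{MajorBoot:nastyprob} and Proposition~\ref{MajorBoot:nastyexp}, using a test set $T$ whose edges are conditioned on so that the ``not infected by $S$'' events become independent), and (ii) deterministic edge-counting for large sets (Lemma~\ref{MajorBoot:nolargenasty}). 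For $\lambda<0$ the paper again avoids step-by-step conditioning: it fixes a target size $t=\lfloor n(\log\log d)^{\lambda}/\sqrt{d\log d}\rfloor$, over-counts by letting every vertex of a candidate set $T$ treat all of $T$ as already infected, conditions once on the edge set inside $T$, and applies a first-moment bound over all $\binom{n-m}{t}$ choices of $T$ (Lemma~\ref{lowerlem}). Both halves use log-concavity of $g(x)=\mathbb P(B(m,p)+x\ge B(\cdot,p))$ to collapse the sum over internal edge configurations to a closed form. The upshot is that the paper buys independence by paying with a union bound over sets, never over time steps.

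Your dynamical recursion is the natural picture (and is essentially the method of Stef\'ansson--Vallier mentioned in the paper), and your computation $\rho p=(\log\log d)^{2\lambda}/\sqrt{2\pi}\,(1+o(1))$ is correct. But the obstacle you flag is the whole proof: once $I_1,\dots,I_t$ are revealed, the edges from surviving vertices into $I_t$ are no longer fresh, and neither sprinkling nor a martingale argument is obviously strong enough at the $(\log\log\log d)$ scale --- every $o(1)$ in your recursion must be $o(1/\log\log\log d)$ uniformly over the steps, and you have not shown this. The paper's insight is precisely to replace ``iterate and control dependencies'' by ``take one step, then count closed sets'' (upper bound) and ``union-bound over candidate infected sets with edges inside the set revealed all at once'' (lower bound). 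If you want to salvage your approach, the lower bound is the easier half: your geometric decay $s_{t+1}\le\beta's_t$ can be made rigorous by exactly the paper's trick of giving each $v\in T$ credit for all of $T$ as infected, which dominates the true process and decouples the steps; but for the upper bound you would still need a replacement for the closed-set machinery to finish once $|I_t|$ passes $n/2$.
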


\ind Our second result concerns a more natural setup, where each vertex is initially independently infected with probability $q$, we have that, with high probability, $||I_0|-qn|\leq \omega(n)\sqrt{q(1-q)n}$. When $\sqrt{n}\ll \frac{n\log\log\log d}{\sqrt{d\log d}},$ i.e, when $p\ll \frac{(\log\log\log n)^2}{\log n}$, our result above shall still hold in this setting for $q=m/n$.

\ind More formally define the $MB^\prime(n,p\ ;q)$ to be the process in which the graph $G(n,p)$ is chosen, and each vertex is initially infected independently with probability $q$. Then the infection spreads by the majority bootstrap percolation process. For the process $MB^\prime(n,p\ ;q)$ define
$$\mathcal{P}^\prime_q(G(n,p))=\mathbb{P}(I=[n]).$$

\begin{cor}
\label{MajorBoot:maincor}
Fix some number $\epsilon>0$. Assume that  for $ n $ large enough,
$$(1+\epsilon)\log n\leq p(1-p)n.$$

 If  $p\ll \frac{(\log\log\log n)^2}{\log n}$, then with   $q=\frac{1}{2}-\frac{1}{2}\sqrt{\frac{\log d}{d}}+\lambda\frac{\log\log\log d}{\sqrt{d\log d}}$, we have
\begin{equation*}
\qquad \mathcal{P}^\prime_q\left(G(n,p)\right)  \stackrel{n\rightarrow \infty}{\longrightarrow}
\begin{cases}
1, \,\,\, \text{if}\,\,\, \lambda > \frac{1}{2}, \\
0, \,\,\,  \text{if}\,\,\, \lambda <0.
\end{cases}
\end{equation*}
If  $p\gg \frac{(\log\log\log n)^2}{\log n}$, then with $q=\frac{1}{2}-\frac{1}{2}\sqrt{\frac{\log d}{d}}+\theta\frac{1}{\sqrt{n}}$, we have
$$\mathcal{P}^\prime_q\left(G(n,p)\right)\rightarrow\Phi(2\theta),$$
where $\Phi(x)$ denotes the distribution function of the standard Normal random variable.

\end{cor}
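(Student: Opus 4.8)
The plan is to reduce Corollary~\ref{MajorBoot:maincor} to Theorem~\ref{MajorBoot:mainthm} by a coupling of the two processes. In $MB^\prime(n,p\ ;q)$, conditionally on $|I_0|=k$ the set $I_0$ is uniformly distributed on $[n]^{(k)}$, and this conditional law is independent of which edges appear in $G(n,p)$; hence, writing $M\sim B(n,q)$,
\begin{equation*}
\mathcal{P}^\prime_q(G(n,p))=\mathbb{E}\big[\mathcal{P}_M(G(n,p))\big].
\end{equation*}
Next, $m\mapsto\mathcal{P}_m(G(n,p))$ is non-decreasing: running the majority bootstrap process on the same graph from two nested initial sets keeps the infected sets nested at every step. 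These two observations let me sandwich $\mathcal{P}^\prime_q(G(n,p))$ between values of $\mathcal{P}_m(G(n,p))$ at deterministic levels governed by the fluctuations of $M$, which have size of order $\sqrt{q(1-q)n}=(\tfrac12+o(1))\sqrt n$ since $q\to\tfrac12$.

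In the regime $p\ll\frac{(\log\log\log n)^2}{\log n}$, which as noted before the corollary is exactly the statement $\sqrt n\ll n\frac{\log\log\log d}{\sqrt{d\log d}}$, the stated choice gives $qn=\frac n2-\frac n2\sqrt{\tfrac{\log d}{d}}+\lambda n\frac{\log\log\log d}{\sqrt{d\log d}}$. Since $R_n:=\big(n\frac{\log\log\log d}{\sqrt{d\log d}}\big)/\sqrt n\to\infty$, I would pick $\psi(n)=\sqrt{R_n}\to\infty$, so that $\psi(n)\sqrt{q(1-q)n}=o\big(n\frac{\log\log\log d}{\sqrt{d\log d}}\big)$ while a Chernoff bound gives $|M-qn|\le\psi(n)\sqrt{q(1-q)n}$ with high probability. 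Put $m_-=\lfloor qn-\psi(n)\sqrt{q(1-q)n}\rfloor$ and $m_+=\lceil qn+\psi(n)\sqrt{q(1-q)n}\rceil$; both then have the form required by Theorem~\ref{MajorBoot:mainthm} with the same constant $\lambda$. Monotonicity together with $\mathbb{P}(m_-\le M\le m_+)\to1$ yields
\begin{equation*}
(1-o(1))\,\mathcal{P}_{m_-}(G(n,p))\le\mathcal{P}^\prime_q(G(n,p))\le\mathcal{P}_{m_+}(G(n,p))+o(1),
\end{equation*}
and Theorem~\ref{MajorBoot:mainthm} sends both outer quantities to the common limit, $1$ if $\lambda>\tfrac12$ and $0$ if $\lambda<0$.

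In the regime $p\gg\frac{(\log\log\log n)^2}{\log n}$, equivalently $\sqrt n\gg n\frac{\log\log\log d}{\sqrt{d\log d}}$, I would write $k^*=\frac n2-\frac n2\sqrt{\tfrac{\log d}{d}}$, so the stated choice gives $qn=k^*+\theta\sqrt n$. Fix $s>0$. Because $s\sqrt n\gg n\frac{\log\log\log d}{\sqrt{d\log d}}$, for all large $n$ the integer $\lceil k^*+s\sqrt n\rceil$ exceeds the level at which Theorem~\ref{MajorBoot:mainthm} with $\lambda=1$ guarantees percolation with high probability, so monotonicity gives $\mathcal{P}_{\lceil k^*+s\sqrt n\rceil}(G(n,p))\to1$; symmetrically $\mathcal{P}_{\lfloor k^*-s\sqrt n\rfloor}(G(n,p))\to0$ using $\lambda=-1$. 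Combining with the coupling identity and monotonicity,
\begin{equation*}
(1-o(1))\,\mathbb{P}\big(M\ge\lceil k^*+s\sqrt n\rceil\big)\le\mathcal{P}^\prime_q(G(n,p))\le\mathbb{P}\big(M\ge\lfloor k^*-s\sqrt n\rfloor\big)+o(1).
\end{equation*}
Since $q(1-q)n\to\infty$, the de Moivre--Laplace theorem applies to $M\sim B(n,q)$, and using $(k^*\pm s\sqrt n-qn)/\sqrt{q(1-q)n}\to2(\pm s-\theta)$ gives $\mathbb{P}(M\ge\lceil k^*+s\sqrt n\rceil)\to\Phi(2\theta-2s)$ and $\mathbb{P}(M\ge\lfloor k^*-s\sqrt n\rfloor)\to\Phi(2\theta+2s)$. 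Hence $\Phi(2\theta-2s)\le\liminf\mathcal{P}^\prime_q(G(n,p))\le\limsup\mathcal{P}^\prime_q(G(n,p))\le\Phi(2\theta+2s)$ for every $s>0$; letting $s\downarrow0$ and using continuity of $\Phi$ gives $\mathcal{P}^\prime_q(G(n,p))\to\Phi(2\theta)$.

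The step I expect to require the most care is purely bookkeeping: keeping straight the three relevant scales, namely $\sqrt n$ (the binomial fluctuation), $n\frac{\log\log\log d}{\sqrt{d\log d}}$ (the width of the critical window in Theorem~\ref{MajorBoot:mainthm}), and $n\sqrt{\tfrac{\log d}{d}}$ (the location of the threshold), and verifying that the hypothesis on $p$ is exactly the comparison of the first two. Granted that, no strengthening of Theorem~\ref{MajorBoot:mainthm} is needed, because in the regime $p\gg\frac{(\log\log\log n)^2}{\log n}$ the percolates/does-not-percolate dichotomy is obtained simply by applying the theorem with a fixed $\lambda$ (for instance $\pm1$) and then invoking monotonicity.
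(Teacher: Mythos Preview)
Your proposal is correct and follows essentially the same approach as the paper: both arguments use the identity $\mathcal{P}^\prime_q(G(n,p))=\sum_m\mathbb{P}(B(n,q)=m)\mathcal{P}_m(G(n,p))$, monotonicity of $m\mapsto\mathcal{P}_m(G(n,p))$, concentration of $B(n,q)$ in the first regime, and the Central Limit Theorem combined with Theorem~\ref{MajorBoot:mainthm} at shifted levels in the second regime. Your write-up is in fact more explicit than the paper's about the monotonicity step and the coupling identity, but the substance is the same.
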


\begin{proof}
As each vertex is infected independently, $|I_0|$ has distribution $B(n,q)$. Thus, with high probability, it holds that $||I_0|-qn|\leq\omega(n)\sqrt{q(1-q)n}.$ If $p\ll\frac{(\log\log\log n)^2}{\log n}$, then $n\frac{\log\log\log d}{\sqrt{d\log d}}\gg \sqrt{n}$ and the result follows from Theorem~\ref{MajorBoot:mainthm}. 

\ind If $p\gg\frac{(\log\log\log n)^2}{n}$, then for each fixed $\delta>0$ by the Central Limit Theorem we obtain
\begin{align*}
\mathcal{P}^\prime_q\left(G(n,p)\right)
&=\sum_{m=0}^n\mathbb{P}\left(B(n,q)=m\right)\mathcal{P}_m\left(G(n,p)\right)\\
&\geq\mathbb{P}\left(B(n,q)\geq qn+(\delta-\theta)\sqrt{n}\right)\mathcal{P}_{\lfloor qn+(\delta-\theta){\sqrt{n}}\rfloor}\left(G(n,p)\right)\\
&=\mathbb{P}\left(B(n,q)/\sqrt{q(1-q)n}\geq (qn+(\delta-\theta)\sqrt{n})/\sqrt{q(1-q)n}\right)(1+o(1))\\
&\rightarrow\Phi(2(\theta-\delta)),
\end{align*} 
where the fourth line follows as $\mathcal{P}_{\lfloor qn+(\delta-\theta){\sqrt{n}}\rfloor}\left(G(n,p)\right)\rightarrow 1$ for $p\gg\frac{(\log\log\log n)^2}{\log n}$ by Theorem \ref{MajorBoot:mainthm}. A similar argument shows that $$1-\mathcal{P}^\prime_q(G(n,p))\geq\Phi(-2(\theta+\epsilon))(1+o(1)),$$ 
and so 
$$\mathcal{P}^\prime_q(G(n,p))\rightarrow\Phi(2\theta).$$ 
\end{proof}

\ind When $p$ is smaller than the connectivity threshold, $G(n,p)$ contains isolated vertices. Due to the way we define the $MB(n,p\ ;m)$ process, any uninfected isolated vertex becomes infected in the first time step, so this is not an obstruction to complete percolation. However, once $p$ drops to below $\frac{\log n}{2n},$ then, with high probability, $G(n,p)$ contains isolated edges and neither endpoint of an isolated edge becomes infected if both endpoints are initially uninfected. This means that $\mathcal{P}_m(G(n,p))\rightarrow 0$ unless $m=n-o(n)$.   


\begin{rem} 
Preliminary versions of this paper (including the same results) were  included in the Phd thesis by Kettle \cite{kettle2014thesis} and in the PhD thesis by Ju\v{s}kevi\v{c}ius \cite{juskevicius2015thesis}. There is also a recent study by Stef\'ansson and Vallier \cite{stefanssonvallier} on this subject using completely different methods than those that are used in this paper (but using similar methods as was used by Janson, {\L}uczak, Turova and Vallier in \cite{janson2012bootstrap}), where they show the first asymptotics of the thresholds $ m\sim\frac{n}{2} $ in Theorem   \ref{MajorBoot:mainthm} above,
and similarly thus the first asymptotics of the threshold $ q\sim \frac{1}{2} $ in Corollary \ref{MajorBoot:maincor} above.
\end{rem}
 
\section{Upper Bound}\label{upper}

\ind As $G$ is finite the $MB(n,p\ ;m)$ process will eventually terminate with some set $I\subset [n]$ of infected vertices. If we do not infect the whole graph, or, equivalently, we have that $I\neq [n]$, then we can say something about the structure of $I$. We shall call a proper subset $S$ of $[n]$ \emph{closed} if for all $v\in [n]\setminus S$ we have $|\Gamma(v)\cap S|<\frac{\left|\Gamma(v)\right|}{2}.$
Recall that $ I_0 $ is the set of initial infected vertices and that a vertex $ v\in I_{t+1} $, if either $ v\in I_t $ or if at least half of its neighbours lies in $ I_t $. In particular $ I_t\subseteq I_{t+1} $. 
If the majority bootstrap process does not percolate, let $ T $ be such that the process has stabilized, i.e., $I=I_{T}=I_{T+1}\neq [n]$. Then $ I $ is a closed set, and thus we must have that the initially infected vertices $I_0$ is a subset of a closed set. We shall show that, if $ \lambda>\frac{1}{2} $, then, with high probability, $I_0$ is contained in no closed sets in three stages. Using Lemma~\ref{MajorBoot:nolargenasty} will allow us that, with high probability, the graph $G(n,p)$ has no "large" closed sets. After that we shall bound the expected number of medium sized closed sets that $I_0$ is contained in, hence by the Markov inequality it
will follow that, with high probability, there are no medium sized closed sets containing $ I_0 $. But before we proceed with proving these two facts, we shall show that, with high probability, the number of infected vertices after one time step, $|I_1|,$ is large, and so $I_0$ can rarely be contained in a small closed set. Recall that
$$m=\frac{n}{2}-\frac{n}{2}\sqrt{\frac{\log d}{d}}+\lambda n\frac{\log \log \log d}{\sqrt{d\log d}}+o\left(n\frac{\log \log \log d}{\sqrt{d\log d}}\right).$$
We assume that for some fixed $ \epsilon>0 $ it holds that for $ n $ large enough $(1+\epsilon)\log n\leq p(1-p)n$. However, for some of the results below it is enough to assume that $ p(1-p)n=\omega(n) $.

\begin{lem}
\label{MajorBoot:upbstep1}
In the $MB(n,p\ ;m)$ process,
$$|I_1\setminus I_0|\geq \frac{n(\log\log d)^{2\lambda}}{e^8\sqrt{d \log d}},$$ 
with high probability.
\end{lem}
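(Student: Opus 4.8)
The plan is to show that a typical vertex $v \notin I_0$ has enough neighbours in $I_0 = [m]$ to be infected in the first step, and then to count such vertices. Fix a vertex $v \in [n] \setminus [m]$. The number of neighbours of $v$ inside $I_0$ is distributed as $B(m, p)$, and its total degree is $B(n-1,p)$. The event $v \in I_1$ is the event that $v$ has at least half its neighbours in $I_0$; since $m = \frac{n}{2} - \frac{n}{2}\sqrt{\frac{\log d}{d}} + \cdots$ is just slightly below $n/2$, this is a moderate large-deviation event for a binomial, and the relevant probability is of order $\frac{(\log\log d)^{2\lambda}}{\sqrt{d\log d}}$ (up to the constant $e^{-8}$ which gives slack). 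First I would compute $\mathbb{P}(v \in I_1)$ precisely enough: writing $X \sim B(m,p)$ for the infected-neighbour count and $Y \sim B(n-1-m,p)$ for the uninfected-neighbour count (independent), we need $\mathbb{P}(X \geq Y)$, equivalently $\mathbb{P}(X - Y \geq 0)$. The mean of $X-Y$ is $(2m-n+1)p \approx -np\sqrt{\frac{\log d}{d}} = -\sqrt{d(1-p)}\sqrt{\log d} \approx -\sqrt{d\log d}$, and the variance is $\approx (n-1)p(1-p) = d(1-p)^2 \approx d$ (for $p = o(1)$; the general case is handled by carrying the $(1-p)$ factors, which is why $d = np/(1-p)$ is the right normalization). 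So we are asking for the probability that a sum of $\Theta(n)$ independent bounded variables exceeds its mean by about $\sqrt{\log d}$ standard deviations; this is roughly $e^{-(\log d)/2} \cdot (\text{polylog corrections}) = d^{-1/2}(\log d)^{O(1)}(\log\log d)^{2\lambda}$, matching the target. I would extract this either from a sharp local/tail estimate for the binomial (Stirling, or a Chernoff bound with the second-order term kept) or, more cleanly, cite whichever inequality from Section~\ref{ineqineq} is designed for exactly this purpose; the $\lambda$-dependence enters through the $\lambda n \frac{\log\log\log d}{\sqrt{d\log d}}$ term in $m$, which shifts the mean by $\approx \lambda\sqrt{\frac{\log d}{\log\log d}} \cdot \log\log\log d$ — wait, one must be careful here: this shift, divided by $\sqrt{d}$, contributes a factor $\exp\bigl(\lambda \sqrt{\log d}\,\frac{\log\log\log d}{\sqrt{\log d}}\cdot(\ldots)\bigr)$; I would track the exponent carefully so that it collapses to $(\log\log d)^{2\lambda}$, since $\exp(2\lambda \log\log\log d) = (\log\log d)^{2\lambda}$.

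Having established $\mathbb{P}(v \in I_1) \geq c\,\frac{(\log\log d)^{2\lambda}}{\sqrt{d\log d}}$ for each $v \notin [m]$ with a generous constant $c$, the expected size of $I_1 \setminus I_0$ is at least $(n-m)\cdot c\,\frac{(\log\log d)^{2\lambda}}{\sqrt{d\log d}} \geq \frac{n}{2}\cdot c\,\frac{(\log\log d)^{2\lambda}}{\sqrt{d\log d}}$, which comfortably exceeds $\frac{n(\log\log d)^{2\lambda}}{e^8\sqrt{d\log d}}$ (this is where the large constant $e^8$ in the statement buys room: we only need the expectation to beat the target by a constant factor, then concentrate). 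Next I would prove concentration of $|I_1 \setminus I_0|$ around its mean. The indicators $\mathbf{1}[v \in I_1]$ for distinct $v \notin [m]$ are not independent, but they depend on disjoint edge sets except for edges among $[n]\setminus[m]$; the cleanest route is a second-moment / Chebyshev argument, showing $\mathrm{Cov}(\mathbf{1}[u\in I_1],\mathbf{1}[v\in I_1])$ is small — each pair shares only the single potential edge $uv$, so the covariance is $O(p)$ times a product of marginal-type quantities, and summing over pairs gives a variance that is $o(\mathbb{E}|I_1\setminus I_0|^2)$ provided $\mathbb{E}|I_1 \setminus I_0| \to \infty$, which holds since $n/\sqrt{d\log d} \to \infty$ under $p(1-p)n = \omega(n)$ — hmm, actually $\mathbb{E}|I_1\setminus I_0| \asymp n(\log\log d)^{2\lambda}/\sqrt{d\log d}$, and since $d \leq n$ this is $\gtrsim \sqrt{n}/\sqrt{\log n} \to \infty$. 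Alternatively one can use a bounded-differences (Azuma–Hoeffding / Talagrand) inequality on the edge-exposure martingale: changing one edge changes $|I_1 \setminus I_0|$ by at most $2$, but with $\binom{n}{2}$ edges the crude bound $\sqrt{\binom{n}{2}}$ is too weak, so one needs the vertex-exposure version or the typical-Lipschitz refinement — the second moment method is the safer bet.

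I expect the main obstacle to be the first step: pinning down $\mathbb{P}(v \in I_1)$ with enough precision to get the exact exponent $2\lambda$ on $\log\log d$ while only demanding a lower bound (so constant factors, including the $e^{-8}$, are free). The subtlety is that the deviation $\sqrt{\log d}$ standard deviations sits in the regime where the Gaussian approximation to the binomial tail needs its polynomial correction factor $\sim 1/(\text{deviation})$, i.e.\ a factor $\sim 1/\sqrt{\log d}$, which is exactly the $\sqrt{\log d}$ in the denominator of the target — so the Gaussian/Stirling bookkeeping must be done, not waved away. I would isolate this as a clean binomial tail lemma (presumably one of the inequalities in Section~\ref{ineqineq}, applied with $N = n-1$, success probability $p$, and threshold $\lceil (\deg v)/2 \rceil$, then averaged over the value of $\deg v$ which concentrates at $np$), and feed it in. Everything downstream — linearity of expectation, the factor-$2$ loss from $n - m \geq n/2$, and Chebyshev — is routine.
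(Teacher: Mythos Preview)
Your approach is essentially the paper's: compute $r=\mathbb{P}(v\in I_1)=\mathbb{P}(B(m,p)\ge B(n-m-1,p))$ via a sharp two-binomial tail bound (the paper uses Proposition~\ref{MajorBoot:twobinomlower}, which gives exactly the $(\log\log d)^{2\lambda}/\sqrt{d\log d}$ asymptotic you describe), then Chebyshev on $X=\sum_{v\notin I_0}\mathbf{1}[v\in I_1]$. One point to tighten: the condition $\mathbb{E}X\to\infty$ only controls the diagonal part of the variance; for the off-diagonal you actually need $r'/r\to 0$ where $r'=\mathbb{P}(A_j\mid A_i)-r$, and ``covariance $=O(p)\times$ marginal-type quantities'' is not enough unless the ``marginal-type quantity'' is a \emph{point} probability rather than a tail --- the paper conditions on the shared edge to get $r'\le p\,\mathbb{P}\bigl(B(m,p)=B(n-m-2,p)\bigr)$ and then bounds this point probability via Proposition~\ref{MajorBoot:twobinomdiffupper}, yielding $r'/r=O(\sqrt{d}\log d/n)=o(1)$.
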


\begin{proof}

For $i\in \left[n\right]\setminus I_0$, denote by $A_i$ the event that vertex $i$ is infected at time one, that is the event that $i$ has fewer neighbours in $[n]\setminus I_0$ than it does in $I_0$. The events $A_i$ are identical and very weakly correlated but not independent. Let $X$ be the number of vertices infected at the first step of the process. Then $X=|I_1\setminus I_0|=\sum\mathbf{1}(A_i)$. 
We shall use Chebyshev's inequality to bound the probability that $X$ is small. 

\ind As the events $A_i$ are identical we shall set $r=\mathbb{P}(A_i)$, so $\mathbb{E}(X)=(n-m)r$. Let $B(m,p)$ and $B(n-m-1,(1-p))$ be independent random variables with means $\mu_1$ and $\mu_2$, respectively. We have that 
\begin{align*}
r&=\mathbb{P}\left(\left|\Gamma(i)\cap I_0\right|\geq\Gamma(i)\cap ([n]\setminus I_0)\right)\\
&=\mathbb{P}\left(B(m,p)\geq B(n-m-1,p)\right)\nonumber\\
&=\mathbb{P}\left(B(m,p)+B(n{-}m{-}1,(1{-}p))\geq \mu_1+\mu_2+p(n{-}2m{-}1)\right).
\end{align*}
 
\ind For $ p\gg \frac{1}{n} $, we have $p(n-2m-1)=\omega(n)\sqrt{p(1-p)n}$ and $p(n-2m-1)^2=o(n\sqrt{p(1-p)n})$. Applying the bound from Proposition~\ref{MajorBoot:twobinomlower} to the last equality with $N=\frac{n-1}{2}$, $S=\frac{n-1-2m}{2}$ and $h=p(n-2m-1)$, we obtain
\begin{align}
\label{MajorBoot:rbound}
r&>\frac{\sqrt{p(1-p)(n-1)}}{2\pi p(n-2m-1)}\exp\left(-\frac{p(n-2m-1)^2}{2(1-p)(n-1)}-4-o(1)\right)\nonumber \\
&>\frac{1}{2\pi\sqrt{\log d}}\exp\left(-\frac{\log d}{2}+2\lambda\log\log\log d-4+o(1)\right)\nonumber \\
&>\left(\frac{(\log\log d)^{2\lambda}}{2\pi e^4\sqrt{d\log d}}\right)(1+o(1)),
\end{align}
where in the second line we have used the asymptotic relation 
$$d(n-2m-1)^2=n^2\log d-4\lambda n^2\log\log\log d+o(n^2).$$

\ind Let us calculate the variance of $X$. Let   $$r^\prime=\mathbb{P}(A_j|A_i)-r,$$ this being the same for any $i\neq j$. We have
\begin{align}
\label{MajorBoot:varequal}
\mbox{Var}(X)&=\sum_{i,j\in [n]\setminus[m]}(\mathbb{P}(A_j | A_i)-\mathbb{P}(A_j))\mathbb{P}(A_i)\nonumber \\
                              &=(1-r)r(n-m)+r^\prime r(n-m)(n-m-1),
\end{align}
where  the first term in (\ref{MajorBoot:varequal})is the sum over $i=j$ and the second term is the sum over $i\neq j$. Let $B_{ij}$ and $\overline{B}_{ij}$ be the events that $ij$ is, or is not, an edge in $G$ respectively. 
Note that \begin{align}\label{condB}
\mathbb{P}(A_j|B_{ij})=\mathbb{P}(B(m,p)\geq B(n-m-2,p)+1)
\end{align}
 and
\begin{align}\label{condnotB}
\mathbb{P}\left(A_j|\overline{B}_{ij}\right)=\mathbb{P}(B(m,p)\geq B(n-m-2,p)),
\end{align}
where  $B(m,p) $ and $ B(n-m-2,p) $ are independent random variables. Note that $\mathbb{P}(A_j|B_{ij})\leq \mathbb{P}\left(A_j|\overline{B}_{ij}\right)  $, hence
we may bound $r^\prime$ by
\begin{align*}
r^\prime=\mathbb{P}(A_j|A_i){-}\mathbb{P}(A_j)& = \mathbb{P}(A_j|B_{ij})\mathbb{P}(B_{ij}|A_i){+}\mathbb{P}\left(A_j|\overline{B}_{ij}\right)\mathbb{P}\left(\overline{B}_{ij}|A_i\right){-}\mathbb{P}\left(A_j\right) \\
                                   & \leq  \mathbb{P}\left(A_j|\overline{B}_{ij}\right)-\mathbb{P}\left(A_j\right) \\
                                   & =  \mathbb{P}\left(A_j|\overline{B}_{ij}\right)(1-(1-p))\\
																	 & =  p\left(\mathbb{P}\left(A_j|\overline{B}_{ij}\right)\right)\\
                                   & =  p\mathbb{P}\left(B(m,p) = B(n-m-2,p)\right),
\end{align*}
where the last equality follows from (\ref{condB}) and (\ref{condnotB}). 

\ind As $p(\frac{n}{2}-m-1)=\omega(n)\sqrt{p(1-p)n}$ (which is true for $ p\gg \frac{1}{n} $) we get from Proposition~\ref{MajorBoot:twobinomdiffupper} applied with $(N,S,T)=(\frac{n}{2}-1,\frac{n}{2}-m-1,0)$, that $r^\prime$ is at most
\begin{align*}
&\lefteqn{\frac{p(\frac{n}{2}-m-1)}{2\pi(1-p)(\frac{n}{2}-1)}\exp\left(-\frac{p(\frac{n}{2}-m-1)^2}{(1-p)(\frac{n}{2}-1)}+o(1)\right)} \\
&+\frac{3}{\pi(\frac{n}{2}-m-1)}\exp\left(-\frac{9p(\frac{n}{2}-m-1)^2}{8(1-p)(\frac{n}{2}-1)}\right)\\
\lefteqn{<\frac{p\sqrt{\log d}}{2\pi(1-p)\sqrt{d}}\exp\left(-\frac{\log d}{2}+2\lambda\log\log\log d+o(1)\right)}\\
&+\frac{6\sqrt{d}}{\pi n\sqrt{\log d}}\exp\left(-\frac{9\log d}{16}+\frac{9\lambda\log\log\log d}{4}+o(1)\right).
\end{align*}

\ind The second term is much smaller than the first term, and so (for $ n $ large enough)
\begin{align}
\label{MajorBoot:rprimebound}
r^\prime<\left(\frac{\sqrt{\log d}(\log\log d)^{2\lambda}}{\pi n}\right).
\end{align}

\ind We are now able to bound the probability that $X$ is small. From (\ref{MajorBoot:varequal}) and Chebyshev's inequality we get
\begin{align*}
\mathbb{P}\left(X\leq \frac{(n-m)r}{2}\right) & \leq  \mathbb{P}\left(|X-(n-m)r|\geq \frac{(n-m)r}{2}\right) \\
                                                    & \leq  \frac{4\mbox{Var}(X)}{((n-m)r)^2} \\
                                                    & = \frac{4((1-r)+(n-m-1)r^\prime)}{(n-m)r}  \\
                                                    &< \frac{4r^\prime}{r}+o(1).
\end{align*}

\ind From (\ref{MajorBoot:rprimebound}) and (\ref{MajorBoot:rbound}) this is at most
\begin{align*}
\left(\frac{2e^4\log d\sqrt{d}}{n}\right)(1+o(1))+o(1)=o(1),
\end{align*}
and so we have, with high probability, that $|I_1\setminus I_0|$ is at least $\frac{(n-m)r}{2}$. By using (\ref{MajorBoot:rbound}) we get that for large $n$, 
$$\frac{(n-m)r}{2}\geq \frac{n(\log\log d)^{2\lambda}}{e^8\sqrt{d\log d}},$$
which completes the proof.
\end{proof}

\ind We now show that $G(n,p)$ contains no large closed sets by a simple edge set comparison.

\begin{lem}
\label{MajorBoot:nolargenasty}
Suppose that for some fixed $\epsilon>0$ we have   $$p(1-p)n\geq (1+\epsilon)\log n,$$ for $n$  large enough. Then, with high probability, $G(n,p)$ contains no closed set of size greater than $\frac{n}{2}+\frac{7n}{2\sqrt{d}}.$
\end{lem}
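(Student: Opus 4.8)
The plan is to carry out the ``edge set comparison'' as follows: first extract what being closed gives us, then reduce to a comparison of two independent binomial edge counts, and finally union bound over all vertex subsets of the relevant size. If $S$ is a closed set, set $\bar S=[n]\setminus S$ and $t=|\bar S|$; by definition each $v\in\bar S$ has $|\Gamma(v)\cap S|<|\Gamma(v)\cap\bar S|$, and summing this over all $v\in\bar S$ gives $e(S,\bar S)<2e(\bar S)$, where $e(S,\bar S)$ counts the edges of $G$ between $S$ and $\bar S$ and $e(\bar S)$ counts the edges inside $\bar S$. Hence it suffices to show that, with high probability, no $S$ with $|S|>\frac n2+\frac{7n}{2\sqrt d}$ satisfies $e(S,\bar S)<2e(\bar S)$ (for $|\bar S|\le 1$ this is automatic, so I may assume $t\ge 2$).

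Next, fix $S$ with $|S|=s=\frac n2+a$ where $a\ge\frac{7n}{2\sqrt d}$, so $t=\frac n2-a$. In $G(n,p)$ the counts $e(S,\bar S)\sim B(st,p)$ and $e(\bar S)\sim B(\binom{t}{2},p)$ are independent, being determined by disjoint sets of potential edges. Let $Z:=2e(\bar S)-e(S,\bar S)$, a sum of $\binom{t}{2}+st$ independent variables each of absolute value at most $2$; then $\mathbb E Z=tp(t-1-s)=-tp(2a+1)$ and $\mathrm{Var}(Z)=p(1-p)\bigl(2t(t-1)+st\bigr)$. I would bound $\mathbb P\bigl(e(S,\bar S)<2e(\bar S)\bigr)=\mathbb P(Z\ge 0)=\mathbb P\bigl(Z-\mathbb E Z\ge tp(2a+1)\bigr)$ by Bernstein's inequality (equivalently, a Chernoff bound for binomials). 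Using $t=\tfrac n2-a$, the resulting estimate is, up to a $(1+o(1))$ factor in the exponent, $\exp\!\bigl(-\tfrac{a^2np}{\frac32(1-p)n+\frac43 a}\bigr)$. The hypothesis $a\ge\frac{7n}{2\sqrt d}$ says precisely that $a^2p\ge\tfrac{49}{4}\,n(1-p)$ (recall $d=\tfrac{np}{1-p}$), and since $p(1-p)n\ge(1+\epsilon)\log n\to\infty$ the term $\tfrac43 a$ is negligible against $\tfrac32(1-p)n$; thus the factor $1-p$ in the variance cancels the one coming from $a^2p$, and one gets $\mathbb P(Z\ge 0)\le\exp(-cn)$ with $c$ arbitrarily close to $\tfrac{49}{6}$, uniformly over all admissible $s$ (increasing $a$ only improves the exponent).

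Finally I would union bound: there are at most $n$ admissible values of $s$ and $\binom{n}{s}\le 2^n$ sets of each size, so the expected number of closed sets of size exceeding $\frac n2+\frac{7n}{2\sqrt d}$ is at most $n\,2^n\exp(-cn)$, which tends to $0$ because $\tfrac{49}{6}>\log 2$. Markov's inequality then completes the proof.

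The step I expect to be the main obstacle is getting the dependence on $p$ right in the tail estimate. A naive Hoeffding bound, which only uses that the summands of $Z$ have range $O(1)$, yields an exponent of order $a^2p^2$, i.e.\ of order $n\,p^2(1-p)$, which is only $n^{-\Theta(1)}$ at the critical width and hopelessly weak against the $2^n$ sets in the union bound. One must instead use an inequality sensitive to the true variance $\mathrm{Var}(Z)$, which is of order $p(1-p)n^2$, so that the $1-p$ factors cancel and the bound remains $e^{-\Omega(n)}$ uniformly in $p$ right up to the connectivity threshold --- in particular in the dense regime $1-p\to 0$, where $G(n,p)$ is almost complete but the critical width $\frac{7n}{2\sqrt d}$ shrinks accordingly. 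A secondary point is to verify that the tail bound is genuinely uniform over the whole range $s\ge\frac n2+\frac{7n}{2\sqrt d}$ and to dispose of the trivial small-$t$ cases.
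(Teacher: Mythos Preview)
Your reduction to the inequality $e(S,\bar S)<2e(\bar S)$ and the Bernstein bound on $Z=2e(\bar S)-e(S,\bar S)$ match the paper's argument for the range $\tfrac n2+\tfrac{7n}{2\sqrt d}<s<\tfrac{4n}5$, and the exponent $\approx-\tfrac{49n}{6}$ you compute there is correct and does beat the $2^n$ union bound.

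The gap is the assertion that ``increasing $a$ only improves the exponent.'' The Bernstein exponent is
\[
-\frac{p\,t\,(2a+1)^2}{2(1-p)(2t+s-2)+\tfrac43(2a+1)},
\]
and the factor $t=\tfrac n2-a$ in the numerator tends to $0$ as $a\to\tfrac n2$. Your displayed formula $-\frac{a^2np}{\frac32(1-p)n+\frac43 a}$ has silently replaced $t$ by $\tfrac n2$. For $t=o(n)$ and small $p$ the exponent is only $\approx -\tfrac{3}{10}\,ptn$, not $-cn$; even replacing the crude $2^n$ by $\binom nt\le n^t$, the union bound then needs $pn>\tfrac{10}{3}\log n$, which the hypothesis $p(1-p)n\ge(1+\epsilon)\log n$ does \emph{not} give for small $\epsilon$. (The side claim that $\tfrac43 a$ is negligible next to $\tfrac32(1-p)n$ also fails once $a$ is of order $n$ and $p$ is close to~$1$.)

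This is not one of the ``trivial small-$t$ cases'': near the connectivity threshold a single vertex has only about $(1+\epsilon)\log n$ expected neighbours, so an edge-counting first-moment argument alone cannot beat $\binom nt$ when $t$ is bounded. The paper accordingly splits off $s\ge\tfrac{4n}{5}$, and when $(1+\epsilon)\log n\le p(1-p)n\le 5\log n$ it handles very small $t=n-s$ by a structural argument instead: every $t$-set with $t\le n^{27/28}$ has at most $2t$ internal edges (hence contains a vertex of internal degree $\le 4$), while w.h.p.\ the minimum degree of $G(n,p)$ is at least~$9$, so that vertex has a majority of its neighbours in $S$ and $S$ cannot be closed. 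You will need some argument of this kind to close the gap.
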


\begin{proof}
Let us write $s$ for the size of the set $S$ i.e., $ s=\mid S \mid $. In order for the set $S$ to be closed, each vertex $v\in [n]\setminus S$ has to have the majority of its neighbours outside $S$. In other words, we must have $|\Gamma(v)\cap([n]\setminus S)|>|\Gamma(v)\cap S|$. Summing over the vertices in $[n]\setminus S$, we have that the number of edges from $S$ to $[n]\setminus S$ must be fewer than twice the number of edges in $[n]\setminus S$. 

If $\frac{n}{2}+\frac{7n}{2\sqrt{d}}<s<\frac{4n}{5}$, then $p(2s-n)\geq7\sqrt{p(1-p)n}$, and so
$$ps(n-s)-3(n-s)\sqrt{p(1-p)s}>2p\binom{n-s}{2}+4(n-s)\sqrt{p(1-p)(n-s)}.$$

\ind By Proposition~\ref{MajorBoot:edgesinlarge} every set of size $n-s$ has at most   $$p\binom{n-s}{2}+2(n-s)\sqrt{p(1-p)(n-s)}$$ edges with probability at least $1-\frac{1}{4^{n-s}}$, and by Proposition~\ref{MajorBoot:edgesbetweenlarge} every set $S$ of size $s$ has at least   $$ps(n-s)-3(n-s)\sqrt{p(1-p)s}$$ edges between it and its complement with probability at least $1-\frac{1}{4^{n-s}}$. Therefore, with high probability, every set $S$ of size   $$\frac{n}{2}+\frac{7n}{2\sqrt{d}}<s<\frac{4n}{5}$$ is not closed. 

\ind If $s\geq\frac{4n}{5}$ and $p(1-p)n\geq 4\log n$, then we know from Proposition~\ref{MajorBoot:edgesbetweensmall} that with probability at least $1-n^{-\frac{n-s}{120}}$ there does not exist a closed set of size $s$ in $G(n,p)$. The result follows as $\sum_{i\geq 1}n^{-\frac{i}{120}}=o(1)$.

\ind If $n-n^{\frac{27}{28}}\geq s\geq\frac{4n}{5}$ and $5\log n \geq p(1-p)n\geq (1+\epsilon)\log n$, then we know from Corollary~\ref{MajorBoot:smallpmediumset} that with probability at least $1-n^{-\frac{n-s}{120}}$ there does not exist a closed set of size $s$ in $G(n,p)$.

\ind If $s\geq n-n^{\frac{27}{28}}$ and $5\log n \geq p(1-p)n \geq (1+\epsilon)\log n$, then we know from  Proposition~\ref{MajorBoot:edgesinsmall} that with probability at least $1-n^{-\frac{n-s}{120}}$ every set $S^{C}:=[n]\setminus S$ of size $n-s$ has at most $2(n-s)$ edges, and so has a vertex $v_{S^{C}}$ of degree at most $4$. By  Proposition~\ref{MajorBoot:mindegree} we have that, with high probability, the minimum degree of $G(n,p)$ is at least $9$, and so $v_{S^{C}}$ will become infected if all of $S$ is infected, and so $S$ is not closed.



\end{proof}

\ind Lastly, we turn to bounding the expected number of medium sized closed sets $I_0$ is contained in. We shall therefore want a bound on the probability that a set $S$ of size at least $s$ in a particular range of $s$ is closed. To do this we shall pick a test set $T$ of a suitable size and bound the probability that none of the vertices in $T$ are infected by $S$.

\begin{lem}
\label{MajorBoot:nastyprob}
Fix $\epsilon>0$ and define  
$$s=\left\lfloor \frac{n}{2}-\frac{n\sqrt{\log d}}{2\sqrt{d}}+\frac{n(\log\log d)^{1+\epsilon}}{\sqrt{d\log d}}\right\rfloor.$$ 
\ind Take any set of vertices $S$ in $G(n,p)$ of size $s\leq|S|<\frac{2n}{3}$. Then for $ n $ large enough,
$$\mathbb{P}(S\mbox{ is closed})\leq\exp\left(-\frac{n(\log d)^{(\log\log d)^{\epsilon}-2}}{e^7\sqrt{d}}\right).$$
\end{lem}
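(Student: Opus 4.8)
The plan is to bound the probability that $S$ is closed by exhibiting a small ``test set'' $T$ of uninfected vertices outside $S$ and showing that, conditioned on everything else, it is very unlikely that every vertex of $T$ fails to be infected by $S$. Fix $S$ with $s\le |S|=:\sigma<\tfrac{2n}{3}$, and pick $T\subseteq[n]\setminus S$ of size $t$, with $t$ chosen polynomially large — say $t=\lceil\sqrt{d\log d}\rceil$ or so, small enough that $|S|$ is essentially unchanged by removing $T$ but large enough to make the final product tiny. For a single vertex $v\in T$, the event that $v$ is \emph{not} infected by $S$ is the event $|\Gamma(v)\cap S|<\tfrac12|\Gamma(v)|$, i.e.\ $B(\sigma,p)<B(n-\sigma-t,p)+(\text{edges to }T\setminus\{v\})$; ignoring the edges inside $T$ only helps us (it can only push $v$ toward being infected), so
$$
\mathbb{P}(v\text{ not infected by }S)\le\mathbb{P}\bigl(B(\sigma,p)<B(n-\sigma-t,p)\bigr)=:1-\rho,
$$
and since $\sigma\ge s$ we have $2\sigma-n\ge 2\sqrt{p(1-p)n}\,(\log\log d)^{1+\epsilon}/\sqrt{\log d}\cdot(\text{const})$, so by the two-binomial lower bound (Proposition~\ref{MajorBoot:twobinomlower}), exactly as in the computation of $r$ in Lemma~\ref{MajorBoot:upbstep1} but now with the shift $h=p(2\sigma-n)$ of order $\sqrt{p(1-p)n}\,(\log\log d)^{1+\epsilon}/\sqrt{\log d}$, one gets
$$
\rho\ \ge\ \frac{(\log d)^{(\log\log d)^{\epsilon}-1}}{e^{6}\sqrt{d\log d}}\,(1+o(1)),
$$
the key point being that $\exp(-h^2/2(1-p)n)$ produces the factor $(\log d)^{-(\log\log d)^{\epsilon}}$ from the $(\log\log d)^{2(1+\epsilon)}/\log d$ in the exponent. (This is why the definition of $s$ uses $(\log\log d)^{1+\epsilon}$ rather than $\log\log\log d$: it forces $\rho$ to decay only sub-polynomially, so that $t\rho\to\infty$.)

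Next I would deal with the dependence between the events $\{v\text{ not infected by }S\}$ for different $v\in T$. The only source of correlation is the shared randomness of the edges from $v,v'$ to the common vertex sets $S$ and $[n]\setminus(S\cup\{v,v'\})$ — but there is no shared randomness at all: the edges incident to $v$ are disjoint from the edges incident to $v'$ except possibly the single edge $vv'$, and we have already discarded all edges inside $T$ from consideration. Hence, conditioned on the (empty) information used so far, the events $\{v\text{ not infected by }S\}$, $v\in T$, are \emph{mutually independent}, and
$$
\mathbb{P}(S\text{ is closed})\le\prod_{v\in T}\mathbb{P}(v\text{ not infected by }S)\le(1-\rho)^{t}\le\exp(-\rho t).
$$
With $t$ of order $\sqrt{d\log d}$ and $\rho$ of order $(\log d)^{(\log\log d)^{\epsilon}-1}/(e^{6}\sqrt{d\log d})$ we obtain $\rho t\ge (\log d)^{(\log\log d)^{\epsilon}-1}/e^{6}$, which is off by a factor of roughly $n/\sqrt d$ from the claimed bound; so in fact one should take $t$ of order $n/\sqrt d$ (still $\le(n-\sigma)/2$ comfortably since $n-\sigma\ge n/3$), which is legitimate because shrinking $S$ by $t=O(n/\sqrt d)$ vertices changes $2\sigma-n$ by $O(n/\sqrt d)=o\bigl(\sqrt{p(1-p)n}\,(\log\log d)^{1+\epsilon}/\sqrt{\log d}\bigr)$ and so does not affect the leading asymptotics of $\rho$. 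Then $\rho t\ge n(\log d)^{(\log\log d)^{\epsilon}-2}/(e^{7}\sqrt d)$ for $n$ large, giving the stated inequality.

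The main obstacle is the bookkeeping in the bound for $\rho$: one must verify that across the whole range $s\le\sigma<\tfrac{2n}{3}$ the shift $h=p(2\sigma-n)$ stays in the regime where Proposition~\ref{MajorBoot:twobinomlower} applies (it does, since $h$ ranges from $\asymp\sqrt{p(1-p)n}\,(\log\log d)^{1+\epsilon}/\sqrt{\log d}$ up to $\asymp p n$, and monotonicity in $\sigma$ means the worst case is $\sigma=s$), and that the $h^2/2(1-p)n$ term is genuinely $\le(\log\log d)^{\epsilon}\log\log d-O(\log\log\log d)$ rather than something larger — this is where the precise constant $e^{7}$ and the ``$-2$'' in the exponent get absorbed. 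Everything else (choice of $T$, independence, the final product bound) is routine; the independence step in particular is immediate once one has correctly thrown away the edges internal to $T$.
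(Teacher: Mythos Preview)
There is a genuine error in the monotonicity step. You correctly observe that the edges from $v$ to $T\setminus\{v\}$ are uninfected neighbours, so throwing them away ``pushes $v$ toward being infected''. But that means the simplified event $\{B(\sigma,p)<B(n-\sigma-t,p)\}$ is \emph{contained in} the true event $\{B(\sigma,p)<B(n-\sigma-t,p)+d_T(v)\}$, and hence
\[
\mathbb{P}\bigl(v\text{ not infected by }S\bigr)\ \ge\ \mathbb{P}\bigl(B(\sigma,p)<B(n-\sigma-t,p)\bigr),
\]
not $\le$ as you wrote. Discarding the internal edges of $T$ therefore gives a \emph{lower} bound on each factor $\mathbb{P}(v\text{ not infected})$, and so a lower bound on $\mathbb{P}(S\text{ closed})$, which is useless here. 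Intuitively: edges inside $T$ help $S$ stay closed, so ignoring them can only make closedness look less likely, and you need the opposite direction.

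This is precisely the obstacle the paper's proof is built to overcome. Instead of discarding the edges inside $T$, the paper conditions on the edge set $E=E(T)$, so that the events $F_v=\{v\text{ not infected by }S\}$ become genuinely independent with $\mathbb{P}(F_v\mid E)=f(d_E(v))$ for $f(x)=\mathbb{P}(B(s,p)<B(n-s-t,p)+x)$. The product $\prod_v f(d_E(v))$ is then controlled via the log-concavity of $f$ (Corollary~\ref{MajorBoot:logconcor}): writing $y=\lceil pt\rceil$ one has $f(x)\le f(y)(f(y+1)/f(y))^{x-y}$, so $\prod_v f(d_E(v))\le f(y)^t(1+a)^{2|E|-ty}$ with $a=f(y+1)/f(y)-1$. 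Summing over $E$ collapses to a binomial identity and leaves a correction factor $\exp(\tfrac32 p a^2 t^2)$ multiplying $f(y)^t$; the rest of the proof shows $a$ is small enough (of order $\sqrt{\log d}\,(\log d)^{2(\log\log d)^\epsilon}/d$) that this correction is negligible. Your approach sidesteps all of this by an inequality that goes the wrong way; to repair it you would need exactly this log-concavity plus averaging-over-$E$ machinery. Note also that the paper takes $t=\lfloor n/(\log d)^2\rfloor$, not $t\asymp n/\sqrt{d}$; with the correct per-vertex bound $f(y)\le\exp(-(\log d)^{(\log\log d)^\epsilon}/(e^6\sqrt d))$ this $t$ is what produces the exponent $n(\log d)^{(\log\log d)^\epsilon-2}/(e^7\sqrt d)$.
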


\begin{proof}
Let $ S $ be a set of vertices such that $ s \leq \mid S \mid \leq \frac{2n}{3}$. Consider a set $ T\subset V(G)\setminus S $  of size   $t=\left\lfloor\frac{n}{(\log d)^2}\right\rfloor$. We shall condition on the edge set of $T$ as once we have done so the events $F_v$, that $v$ is not infected by $S$ for each vertex $v\in T$, are independent. 

\ind Denote by $E=E(T)$ the edge set of $T$, and set $d_E(v)$ to be the degree of vertex $v\in T$, when $T$ has edge set $E$. We have that
$$\mathbb{P}(F_v|E)=\mathbb{P}(|\Gamma(v)\cap S|<d_E(v)+|\Gamma(v)\cap([n]\setminus(S\cup T))|).$$

\ind Therefore, 
 \begin{align*}
\mathbb{P}(S \mbox{ is closed})
&\leq\sum_{E}\mathbb{P}(E)\prod_{v\in T}\mathbb{P}(F_v|E)\\
&=\sum_{E}\mathbb{P}(E)\prod_{v\in T}\mathbb{P}(B(|S|,p)<B(n-|S|-t,p)+d_E(v)),
\end{align*}
where $\mathbb{P}(E)$ is the probability of a particular edge set $E\subset \{0,1\}^{\binom{t}{2}}$  and is equal to $p^{|E|}(1-p)^{\binom{t}{2}-|E|}$.

\ind The function $f_{|S|}(x)=\mathbb{P}(B(|S|,p)<B(n-|S|-t,p)+x)$ (for independent binomial random variables $B(|S|,p)$ and $ B(n-|S|-t,p) $) is decreasing in $|S|$, so we have $f_s(x)\geq f_{|S|}(x)$. Let us supress the dependency on $s$ by writing $f(x)$ instead of $f_s(x)$. We have
\begin{align}
\label{MajorBoot:testsetedgesum}
\mathbb{P}(S \mbox{ is closed})\leq\sum_{E}\mathbb{P}(E)\prod_{v\in T}f(d_E(v)).
\end{align}

\ind The rest of the proof shall be spent bounding (\ref{MajorBoot:testsetedgesum}). The degree of vertices in $T$ is heavily concentrated around $pt$, and we shall expand $f$ around $pt$ to show that (\ref{MajorBoot:testsetedgesum}) is not much larger than $f(pt)^t$.

\ind We have by Corollary~\ref{MajorBoot:logconcor} that $f$ is log-concave, and so for any $x$ and $y$ with $f(y)\neq 0$,
$$f(x)\leq f(y)\left(\frac{f(y+1)}{f(y)}\right)^{x-y}.$$

\ind Setting $y=\lceil pt\rceil\in\mathbb{N}$ we get

\begin{align*}
\mathbb{P}(S \mbox{ is closed})&\leq\sum_{E}\mathbb{P}(E)\prod_{v\in T}f(y)\left(\frac{f(y+1)}{f(y)}\right)^{d_E(v)-y} \\
       &=\sum_{E}\mathbb{P}(E)f(y)^t\left(\frac{f(y+1)}{f(y)}\right)^{2|E|-ty}.
\end{align*}

\ind There is no dependence on $E$ other than its size, and so
\begin{align}
\label{MajorBoot:nosumtrick}
\mathbb{P}(S\mbox{ is closed})&\leq \sum_{i=0}^{\binom{t}{2}}\binom{\binom{t}{2}}{i}p^i(1-p)^{\binom{t}{2}-i}f(y)^t\left(\frac{f(y+1)}{f(y)}\right)^{2i-ty}\nonumber \\
                          &=\left(1-p+p\left(\frac{f(y+1)}{f(y)}\right)^2\right)^{\binom{t}{2}}\left(\frac{f(y)}{f(y+1)}\right)^{ty}f(y)^t.
\end{align}

\ind Setting $\frac{f(y+1)}{f(y)}=1+a,$ we bound (\ref{MajorBoot:nosumtrick}) using the inequalities $1+w\leq e^w$ and $(1+x)^{-1}\leq 1-x+x^2$ for $x\geq 0$ to get
\begin{align}
\label{MajorBoot:probbounda}
\mathbb{P}(S\mbox{ is closed})&\leq\left(1+2ap+a^2p\right)^{\frac{t^2}{2}}\left(\frac{1}{1+a}\right)^{pt^2}f(y)^t \nonumber \\
&\leq\exp\left((2ap+a^2p)\frac{t^2}{2}+(a^2-a)pt^2\right)f(y)^t\nonumber\\
                          &=\exp\left(\frac{3pa^2t^2}{2}\right)f(y)^t.      
\end{align}

\ind We have that 
$$f(y+1)=f(y)+\mathbb{P}\left(B(s,p)=B(n-s-t,p)+y\right).$$ 

\ind Let us write $z=\mathbb{P}\left(B(s,p)=B(n-s-t,p)+y\right)$ to ease up the notation. Thus, $f(y+1)=f(y)+z$. By Proposition~\ref{MajorBoot:twobinomdiffupper} applied with $N=\frac{n-t+T}{2}$, $S=\frac{n-2s-t+T}{2}$ and $T=\frac{\lceil pt\rceil}{p}$ and noting that $0\leq T-t<p^{-1}$, we have
\begin{align}
\label{MajorBoot:zbound}
z&<\frac{n-2s+\frac{1}{p}}{2\pi(1-p)n}\exp\left(-\frac{2p(\frac{n}{2}-s)^2}{(1-p)(n-t)}+o(1)\right)\nonumber\\
&{}+\frac{6}{\pi(n-2s)}\exp\left(-\frac{9p(n-2s)^2}{16(1-p)(n+\frac{1}{p})}\right)\nonumber\\
&=\frac{\sqrt{\log d}}{2\pi(1-p)\sqrt{d}}\exp\left((-\frac{\log d}{2}+2(\log\log d)^{1+\epsilon})(1+\frac{t}{n})+o(1)\right)\nonumber\\
&{}+\frac{6\sqrt{d}}{\pi n\sqrt{\log d}}\exp\left(-\frac{9\log d}{16}+\frac{9(\log\log d)^{1+\epsilon}}{4}+o(1)\right).
\end{align}

\ind The second term in (\ref{MajorBoot:zbound}) is much smaller than the first so as $6<2\pi$ and $t\log d=o(n)$ we get (for $ n $ large enough)
$$z<\frac{\sqrt{\log d}(\log d)^{2(\log\log d)^{\epsilon}}}{6(1-p)d}.$$ 

\ind We can rewrite $f(y)$ as
$$f(y)=1-\mathbb{P}\left(B(s,p)+B(n-s-t,(1-p))\geq n-s-t+y\right).$$

\ind We have for $ p\gg \frac{1}{n} $ the asymptotic relation   $$(p(n-2s)+1)(t+2s-n)=o(n\sqrt{np(1-p)}),$$ and so using Proposition~\ref{MajorBoot:twobinomlower} with $(N,S,h)=(\frac{n-t}{2},\frac{n-2s-t}{2},p(n-2s)+y-pt)$ we obtain (for $ n $ large enough) that
\begin{align}
\label{MajorBoot:f(y)upperbound}
f(y)&<1-\frac{\sqrt{p(1-p)(n-t)}}{2\pi(p(n-2s)+1)}\exp\left(-\frac{(p(n-2s)+1)^2}{2p(1-p)(n-t)}-4-o(1)\right)\nonumber\\
&<1-\frac{(\log d)^{2(\log\log)^{\epsilon}}}{e^6\sqrt{d\log d}}\nonumber\\
&<\exp\left(-\frac{(\log d)^{(\log\log d)^{\epsilon}}}{e^6\sqrt{d}}\right),
\end{align}
the second inequality follows from the same reasoning used in (\ref{MajorBoot:zbound}) and that $e^6>2\pi e^4$.

\ind We can also apply Proposition~\ref{MajorBoot:twobinomupper} to get a lower bound on $f(y)$ (for $ n $ large enough) of
$$f(y)>1-\frac{\sqrt{p(1-p)(n-t)}}{p(n-2s)}\exp\left(-\frac{p(n-2s)^2}{2(1-p)(n-t)}+3+o(1)\right)>\frac{1}{2},$$
here the bound on $1-f(y)$ is actually $o(1)$, being within a constant factor of the bound in (\ref{MajorBoot:f(y)upperbound}).   

\ind We are now able to get a good upper bound on $a$,
$$a=\frac{z}{f(y)}<\frac{\sqrt{\log d}(\log d)^{2(\log\log d)^{\epsilon}}}{3(1-p)d}.$$

\ind Substituting these bounds into (\ref{MajorBoot:probbounda}) we get
(for $ n $ large enough)
\begin{align*}
\mathbb{P}(S\mbox{ is closed})<\exp\left(\frac{p(\log d)^{4(\log\log d)^{\epsilon}}n}{6(1-p)^2d^2\log d}-\frac{(\log d)^{(\log\log d)^{\epsilon}}}{e^6\sqrt{d}}\right)^t.
\end{align*}

\ind The second term in the exponential is much larger than the first term, and so (for $ n $ large enough)
\begin{align*}
\mathbb{P}(S\mbox{ is closed})&<\exp\left(-\frac{(\log d)^{(\log\log d)^{\epsilon}}}{2e^6\sqrt{d}}\right)^t\nonumber\\
&<\exp\left(-\frac{n(\log d)^{(\log\log d)^{\epsilon}-2}}{e^7\sqrt{d}}\right),
\end{align*}
as $t>\frac{2n}{e(\log d)^2}$.
\end{proof}

\ind We shall now bound the expected number of closed sets in this medium sized range that contain $I_0$, this is also a bound on the probability that $I_0$ is contained in such a medium sized closed set.  

\begin{prop}
\label{MajorBoot:nastyexp}
\ind Assume that 
$$m=\frac{n}{2}-\frac{n\sqrt{\log d}}{2\sqrt{d}}+\frac{n\lambda \log\log\log d}{\sqrt{d\log d}}+o\left(n\frac{\log \log \log d}{\sqrt{d\log d}}\right),$$
and choose some $ \epsilon>0 $.
Then the expected number of closed sets in $G(n,p)$ of size between
$$\frac{n}{2}-\frac{n\sqrt{\log d}}{2\sqrt{d}}+\frac{n(\log\log d)^{1+\epsilon}}{\sqrt{d\log d}}\quad \text{and}\quad\frac{n}{2}+\frac{4n}{\sqrt{d}}$$
 that contain $ I_0 $ is $o(1)$.
\end{prop}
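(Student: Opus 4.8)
The plan is a first moment (union bound) argument, with Lemma~\ref{MajorBoot:nastyprob} supplying the per-set estimate. Abbreviate the two endpoints by $s_{-}=\frac{n}{2}-\frac{n\sqrt{\log d}}{2\sqrt d}+\frac{n(\log\log d)^{1+\epsilon}}{\sqrt{d\log d}}$ and $s_{+}=\frac{n}{2}+\frac{4n}{\sqrt d}$. Since $I_0=[m]$, for each integer $k$ there are exactly $\binom{n-m}{k-m}$ sets of size $k$ containing $I_0$, so by linearity of expectation the quantity in question equals $\sum_{k}\binom{n-m}{k-m}\,\mathbb{P}(S_k\text{ is closed})$, where $k$ ranges over the integers in $[s_{-},s_{+}]$ and $S_k$ is any fixed set of size $k$ with $S_k\supseteq[m]$.

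First I would check that the whole range of $k$ is admissible for Lemma~\ref{MajorBoot:nastyprob}. Using the given asymptotics for $m$, we have $s_{-}-m=\frac{n}{\sqrt{d\log d}}\bigl((\log\log d)^{1+\epsilon}-\lambda\log\log\log d+o(\log\log\log d)\bigr)$, which is positive for $n$ large because $(\log\log d)^{1+\epsilon}\gg\log\log\log d$; hence every $k$ in the range has $k\ge m+1$ (so $\binom{n-m}{k-m}\ge 1$ is well defined). Also $s_{+}=\frac{n}{2}(1+o(1))<\frac{2n}{3}$ for $n$ large. Therefore Lemma~\ref{MajorBoot:nastyprob} applies to each $S_k$ and gives, uniformly over the range, $\mathbb{P}(S_k\text{ is closed})\le\exp\bigl(-n(\log d)^{(\log\log d)^{\epsilon}-2}/(e^{7}\sqrt d)\bigr)$.

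Next I would bound the entropy coming from the choice of set. Again by the asymptotics for $m$, $n-m=\frac{n}{2}(1+o(1))$ and, for every $k$ in the range, $k-m\le s_{+}-m=\frac{n\sqrt{\log d}}{2\sqrt d}(1+o(1))=o(n-m)$; hence $k\mapsto\binom{n-m}{k-m}$ is increasing over the range, and the largest term is controlled by
$$\binom{n-m}{\lfloor s_{+}\rfloor-m}\ \le\ \Big(\frac{e(n-m)}{\lfloor s_{+}\rfloor-m}\Big)^{\lfloor s_{+}\rfloor-m}\ =\ \exp\Big(\frac{n(\log d)^{3/2}}{4\sqrt d}(1+o(1))\Big),$$
since $\frac{e(n-m)}{\lfloor s_{+}\rfloor-m}=\frac{e\sqrt d}{\sqrt{\log d}}(1+o(1))$. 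As there are at most $n$ integers in $[s_{-},s_{+}]$, the expected number in question is at most $n\exp\bigl(\frac{n(\log d)^{3/2}}{4\sqrt d}(1+o(1))-\frac{n(\log d)^{(\log\log d)^{\epsilon}-2}}{e^{7}\sqrt d}\bigr)$.

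To conclude I would note that the two competing exponents carry the \emph{same} prefactor $n/\sqrt d$ and differ only in the power of $\log d$, namely $(\log d)^{3/2}$ against $(\log d)^{(\log\log d)^{\epsilon}-2}$; since $(\log\log d)^{\epsilon}\to\infty$, the second power dominates for $n$ large, so the exponent is at most $-\tfrac12\,n(\log d)^{(\log\log d)^{\epsilon}-2}/(e^{7}\sqrt d)$, and a short computation — using that $\frac{n}{\sqrt d}\log d\gg(\log n)^{3/2}$ and $\frac{n}{\sqrt d}\ge\sqrt{(1+\epsilon)\log n}$, both consequences of the hypothesis $(1+\epsilon)\log n\le p(1-p)n$ — shows this beats the leading factor $n$, so the whole bound is $o(1)$. \textbf{The main obstacle} is precisely this last balancing: the estimate of Lemma~\ref{MajorBoot:nastyprob} is weak (it decays only like $\exp(-n(\log d)^{(\log\log d)^{\epsilon}}/(e^{7}\sqrt d(\log d)^{2}))$, which is far too small to overcome the crude count $2^{n}$ of all subsets), so one \emph{must} use the sharp count $\binom{n-m}{\le s_{+}-m}$ of candidate sets, making the entropy term inherit exactly the matching prefactor $n/\sqrt d$ together with only the slowly growing power $(\log d)^{3/2}$; tracking these nested logarithmic factors accurately is where all the care lies, the union bound itself being routine.
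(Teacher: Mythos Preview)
Your proposal is correct and follows essentially the same route as the paper: a first-moment (union) bound, counting the sets of each size containing $I_0$ by $\binom{n-m}{k-m}$, bounding this by $\exp\bigl(n(\log d)^{3/2}/\sqrt d\cdot(1+o(1))\bigr)$, applying Lemma~\ref{MajorBoot:nastyprob} uniformly over the range, and observing that $(\log d)^{(\log\log d)^{\epsilon}-2}$ eventually dominates $(\log d)^{3/2}$ so the exponential decay wins. The paper's version is terser---it bounds the number of admissible sizes by $\lfloor n\sqrt{\log d}/\sqrt d\rfloor$ rather than $n$, and it does not spell out the admissibility checks or the final ``beats the factor $n$'' step---but the substance is identical; your extra verification that $s_{-}>m$ and $s_{+}<\tfrac{2n}{3}$ is a welcome bit of care.
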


\begin{proof}
Let $S$ be a set of size $s$ in our range, $s$ can have at most $\left\lfloor\frac{n\sqrt{\log d}}{\sqrt{d}}\right \rfloor$ different values. For each possible value of $s$ and $ n $ large enough (using Stirling's formula) there are at most 
$$\binom{n-m}{s-m}<\binom{n}{\lfloor{\frac{n\sqrt{\log d}}{\sqrt{d}}\rfloor}}<\left(\frac{e\sqrt{d}}{\sqrt{\log d}}\right)^{\frac{n\sqrt{\log d}}{\sqrt{d}}}<\exp\left(\frac{n(\log d)^{\frac{3}{2}}}{\sqrt{d}}\right)$$
possible closed sets that can contain $ I_0 $. By Lemma~\ref{MajorBoot:nastyprob} the expected number of closed sets is (for $ n $ large enough) less than
$$\frac{n\sqrt{\log d}}{\sqrt{d}}\exp\left(\frac{n(\log d)^{\frac{3}{2}}}{\sqrt{d}}-\frac{n(\log d)^{(\log\log d)^{\epsilon}-2}}{e^7\sqrt{d}}\right),$$
and this is $o(1)$ as $(\log\log d)^\epsilon$ is unbounded.
\end{proof}

\begin{cor}\label{uppercor} 
Fix some number $\epsilon>0$. Assume that for $ n $ large enough,
\begin{equation*}
(1+\epsilon)\log n\leq p(1-p)n. 
\end{equation*}
If the initially infected set $I_{0}$ has size 
\begin{equation*}
m=\frac{n}{2}-\frac{n}{2}\sqrt{\frac{\log d}{d}}+\lambda n\frac{\log \log \log d}{\sqrt{d\log d}}+o\left(n\frac{\log \log \log d}{\sqrt{d\log d}}\right),
\end{equation*}
then
for $\lambda>\frac{1}{2}$, with high probability, the $MB(n,p\ ;m)$ process percolates.
\end{cor}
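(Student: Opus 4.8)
The plan is to assemble the three results already proved — Lemma~\ref{MajorBoot:upbstep1}, Lemma~\ref{MajorBoot:nolargenasty} and Proposition~\ref{MajorBoot:nastyexp} — together with a union bound over at most three events. Recall from the discussion preceding Lemma~\ref{MajorBoot:upbstep1} that if the $MB(n,p\ ;m)$ process fails to percolate then the eventually infected set $I=I_T$ is a proper closed subset of $[n]$ with $I_0\subseteq I_1\subseteq I$. Hence it suffices to show that, with high probability, $I_0$ is contained in no closed set of $G(n,p)$, and for this I would split according to the size $s=|S|$ of a hypothetical closed set $S\supseteq I_0$.

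First I would fix a parameter $\epsilon'\in(0,\,2\lambda-1)$, which is a nonempty interval precisely because $\lambda>\tfrac12$. Consider the size ranges
$$\text{(i) } s>\tfrac{n}{2}+\tfrac{7n}{2\sqrt d},\qquad \text{(ii) } \tfrac{n}{2}-\tfrac{n\sqrt{\log d}}{2\sqrt d}+\tfrac{n(\log\log d)^{1+\epsilon'}}{\sqrt{d\log d}}\le s\le \tfrac{n}{2}+\tfrac{4n}{\sqrt d},$$
and (iii) $s$ strictly below the lower endpoint of (ii). Since $\tfrac72<4$, ranges (i) and (ii) together cover every $s$ that is at least the lower endpoint of (ii), so (i), (ii), (iii) exhaust all possible sizes. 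By Lemma~\ref{MajorBoot:nolargenasty}, with high probability $G(n,p)$ has no closed set with size in range (i). By Proposition~\ref{MajorBoot:nastyexp} applied with $\epsilon'$ in place of $\epsilon$, the expected number of closed sets of size in range (ii) that contain $I_0$ is $o(1)$, so by Markov's inequality, with high probability there are none.

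It remains to handle range (iii), and this is where Lemma~\ref{MajorBoot:upbstep1} enters. We always have $|I|\ge|I_1|=m+|I_1\setminus I_0|$, and by Lemma~\ref{MajorBoot:upbstep1}, with high probability $|I_1\setminus I_0|\ge \tfrac{n(\log\log d)^{2\lambda}}{e^8\sqrt{d\log d}}$. Substituting the assumed form of $m$, and noting that $d\to\infty$ (which follows from $p(1-p)n\to\infty$ since $d\ge p(1-p)n$), one gets
$$m+\tfrac{n(\log\log d)^{2\lambda}}{e^8\sqrt{d\log d}}=\tfrac{n}{2}-\tfrac{n\sqrt{\log d}}{2\sqrt d}+\tfrac{n}{\sqrt{d\log d}}\Bigl(e^{-8}(\log\log d)^{2\lambda}+\lambda\log\log\log d+o(\log\log\log d)\Bigr),$$
and because $2\lambda>1+\epsilon'$ the bracketed factor exceeds $(\log\log d)^{1+\epsilon'}$ for $n$ large. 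Thus, with high probability, any closed set containing $I_0$ — which, if it existed, would be $I$ and would satisfy $|I|\ge|I_1|$ — has size strictly above the lower endpoint of (ii); in particular it cannot lie in range (iii). Combining with the previous paragraph and taking a union bound over the (at most three) bad events, we conclude that with high probability $I_0$ is contained in no closed set, so $I=[n]$ and the process percolates.

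The main obstacle is the size comparison in the last paragraph: one must verify that $\epsilon'$ can be chosen small enough that the growth $(\log\log d)^{2\lambda}$ coming out of Lemma~\ref{MajorBoot:upbstep1} beats the threshold exponent $1+\epsilon'$ placed in Proposition~\ref{MajorBoot:nastyexp}, while keeping $\epsilon'>0$ so that Proposition~\ref{MajorBoot:nastyexp} still applies. This is exactly the point where the hypothesis $\lambda>\tfrac12$ is consumed, and it is why the medium-range threshold in Proposition~\ref{MajorBoot:nastyexp} was deliberately set at $(\log\log d)^{1+\epsilon}$ rather than at something of order $\log\log\log d$. Beyond this bookkeeping, the argument is purely a matter of quoting the preceding three results; no new estimate is required.
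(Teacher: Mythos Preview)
Your proposal is correct and follows essentially the same three-range decomposition as the paper, invoking Lemma~\ref{MajorBoot:upbstep1}, Proposition~\ref{MajorBoot:nastyexp} (the paper takes the specific choice $\epsilon'=\lambda-\tfrac12$, which lies in your interval $(0,2\lambda-1)$), and Lemma~\ref{MajorBoot:nolargenasty} in the same way. One small wording fix: in your range-(iii) argument, an arbitrary closed set $S\supseteq I_0$ need not \emph{be} $I$; rather $S\supseteq I\supseteq I_1$, which is what gives $|S|\ge |I_1|$ and is all you need.
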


\begin{proof}
We have from Lemma~\ref{MajorBoot:upbstep1} that, with high probability, $I_0=[m]$ is contained in no closed set of size less than
$$\frac{n}{2}-\frac{n\sqrt{\log d}}{2\sqrt{d}}+\frac{n(\log\log d)^{2\lambda}}{e^8\sqrt{d\log d}}.$$

Using the Markov inequality it follows from Proposition~\ref{MajorBoot:nastyexp} applied to $\epsilon=\lambda-\frac{1}{2}$ that, with high probability, $I_0$ is contained in no closed set of size between
$$\frac{n}{2}-\frac{n\sqrt{\log d}}{2\sqrt{d}}+\frac{n(\log\log d)^{\lambda+\frac{1}{2}}}{\sqrt{d\log d}}\quad \text{and} \quad\frac{n}{2}+\frac{4n}{\sqrt{d}}.$$

\ind We have from Lemma~\ref{MajorBoot:nolargenasty} that, with high probability, $I_0$ is contained in no closed set of size greater than
$$\frac{n}{2}+\frac{7n}{2\sqrt{d}},$$
and so, for $\lambda>\frac{1}{2}$, with high probability, $I_0$ is not contained in any closed set in $G(n,p)$ and hence percolates.
\end{proof}

\section{Lower Bound}\label{lower}

\ind In this section we shall show the lower bound of 
Theorem \ref{MajorBoot:mainthm}. We show the following result.
\begin{lem}\label{lowerlem}
Fix some number $\epsilon>0$. Assume that for $ n $ large enough,
\begin{equation*}
(1+\epsilon)\log n\leq p(1-p)n. 
\end{equation*}
If the initially infected set $I_{0}$ has size 
\begin{equation*}
m=\frac{n}{2}-\frac{n}{2}\sqrt{\frac{\log d}{d}}+\lambda n\frac{\log \log \log d}{\sqrt{d\log d}}+o\left(n\frac{\log \log \log d}{\sqrt{d\log d}}\right),
\end{equation*}
then, for $\lambda<0$, with high probability, the $MB(n,p\ ;m)$ process does not percolate. 
\end{lem}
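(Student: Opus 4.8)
\emph{Proof plan.} Write $J=I\setminus I_0$ for the set of eventually infected vertices other than the initially infected ones. Since $n-m=(\tfrac12+o(1))n$, it suffices to prove $\mathbb{E}\,|J|=o(n)$: Markov's inequality then gives $\mathcal P_m(G(n,p))=\mathbb{P}(|J|=n-m)\le \mathbb{E}|J|/(n-m)\to 0$. For $v\notin I_0$ let $\sigma(v)=|\Gamma(v)\setminus(I_0\cup\{v\})|-|\Gamma(v)\cap I_0|$ be the excess of initially healthy over initially infected neighbours of $v$; exactly as in the computation leading to \eqref{MajorBoot:rbound}, $\sigma(v)$ is a difference of independent binomials with mean $p(n-2m-1)=(1-o(1))\sqrt{d\log d}$ and standard deviation $(1+o(1))\sqrt d$, and a still-healthy vertex $v$ enters $I_{t+1}$ exactly when it has acquired at least $\tfrac12\sigma(v)$ neighbours in $I_t\setminus I_0$.

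\emph{The first wave is small.} Repeating the second-moment argument of Lemma~\ref{MajorBoot:upbstep1}, but using the \emph{upper} bound of Proposition~\ref{MajorBoot:twobinomupper} for $r:=\mathbb{P}(\sigma(v)\le 0)$ instead of the lower bound, gives $r\le C_1(\log\log d)^{2\lambda}/\sqrt{d\log d}$; combined with the variance estimate \eqref{MajorBoot:varequal}--\eqref{MajorBoot:rprimebound} and Chebyshev this shows that with high probability $|I_1\setminus I_0|\le k_1:=C_2 n(\log\log d)^{2\lambda}/\sqrt{d\log d}$, which for $\lambda<0$ is $o(n/\sqrt{d\log d})$.

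\emph{The process does not cascade.} The key estimate is that $\mathbb{P}(v\in I)=O(r)$ uniformly over $v\notin I_0$; granting this, $\mathbb{E}|J|=\sum_{v\notin I_0}\mathbb{P}(v\in I)=O((n-m)r)=O\big(n(\log\log d)^{2\lambda}/\sqrt{d\log d}\big)=o(n)$ and we are done. To bound $\mathbb{P}(v\in I)$, first reveal all edges not incident to $v$; this determines the set $J_{-v}$ infected by the majority process on $G-v$ started from $I_0$, independently of the edges at $v$. Since the two processes agree for as long as $v$ is healthy, $v$ can be infected in $G$ only if $|\Gamma(v)\cap J_{-v}|\ge\tfrac12\sigma(v)$; conditioning on $J_{-v}$ and revealing the fresh edges at $v$ (so that $|\Gamma(v)\cap I_0|$, $|\Gamma(v)\cap J_{-v}|$ and $|\Gamma(v)\cap(([n]\setminus I_0)\setminus(J_{-v}\cup\{v\}))|$ become independent binomials) one obtains
$$\mathbb{P}\big(v\in I\mid J_{-v}\big)\le \mathbb{P}\big(B(m+|J_{-v}|,p)\ge B(n-m-1-|J_{-v}|,p)\big),$$
which by Proposition~\ref{MajorBoot:twobinomupper} is $O(r)$ provided $|J_{-v}|=O(n/\sqrt{d\log d})$ — shifting $m$ by $O(n/\sqrt{d\log d})$ perturbs the exponent $-p(n-2m-1)^2/2(1-p)(n-1)$ by only $O(1)$. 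Thus everything reduces to showing that, uniformly in $v$, the process on $G-v$ infects at most $O(n/\sqrt{d\log d})$ vertices with probability $1-o(1/n)$. This is where the hypothesis $\lambda<0$ enters: the successive waves $|I_{t+1}\setminus I_t|$ shrink faster than geometrically, since a still-healthy vertex, having $\approx\sqrt{d\log d}$ more healthy than infected initial neighbours, can only be infected by gaining $\approx\tfrac12\sqrt{d\log d}$ neighbours inside the current wave, which has size $o(n/\sqrt{d\log d})$ so that the expected number of such neighbours is $o(\sqrt{d\log d})$; summing this large-deviation probability over all $v$ yields a quantity smaller than the previous wave by a factor $(\log\log d)^{2\lambda}+o(1)<1$, and feeding in the first-wave bound gives $\sum_t|I_{t+1}\setminus I_t|=O(k_1)=O(n/\sqrt{d\log d})$ with the required probability.

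The main obstacle is this last step. The set $J_{-v}$ is an intricate functional of $G-v$, so one must (i) control $\mathbb{P}(v\in I\mid J_{-v})$ with a constant comparable to $r$, not merely $O(1)$, even as it depends on the random size of $J_{-v}$, and (ii) obtain a tail bound on $|J_{-v}|$ strong enough — noticeably stronger than what Chebyshev gives for a single wave — to survive the union bound over all $n$ vertices; this forces one to prove the wave-by-wave geometric decay uniformly across all of the (unboundedly many) rounds of the process, rather than just for the first few steps. The remaining ingredients (Propositions~\ref{MajorBoot:twobinomlower}, \ref{MajorBoot:twobinomupper}, \ref{MajorBoot:twobinomdiffupper} and the edge-counting estimates already used in Section~\ref{upper}) are routine here.
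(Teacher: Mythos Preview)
Your route is genuinely different from the paper's, and the step you yourself flag as ``the main obstacle'' is never carried out. Everything hinges on a tail estimate of the form $\mathbb{P}\bigl(|J_{-v}|>Cn/\sqrt{d\log d}\bigr)=O(r)$ uniformly in $v$, but the only concentration you have in hand is Chebyshev on the first wave, which gives failure probability $o(1)$, nowhere near $O(r)$; and the proposed wave-by-wave iteration runs into real dependence issues, since after step one the edges between $I_1$ and the surviving healthy vertices have been exposed, so the surpluses $\sigma(u)$ of the survivors are no longer independent of the process. Your last paragraph candidly lists what would need to be proved, but does not prove it; and note the near-circularity: bounding $|J_{-v}|$ for the process on $G-v$ is essentially the original problem again, so any argument here must be direct rather than feeding back through $\mathbb{P}(v\in I)=O(r)$.

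The paper sidesteps all of this with a single first-moment argument on \emph{sets} rather than on vertices. Fix $t=\lfloor n(\log\log d)^{\lambda}/\sqrt{d\log d}\rfloor$ and call $T\subset[n]\setminus I_0$ with $|T|=t$ \emph{percolating} if every $v\in T$ has at least half its $G$-neighbours inside $I_0\cup T$; if the process ever infects $t$ new vertices, the first $t$ of them form such a $T$. After conditioning on the internal edge set of $T$ the $t$ events become independent, and the same log-concavity trick used in Lemma~\ref{MajorBoot:nastyprob} collapses the sum over edge configurations to yield
\[
\mathbb{P}(T\text{ percolates})\;\le\;\Bigl(\exp\bigl(\tfrac{3}{2}pa^{2}t\bigr)\,g(\lceil pt\rceil)\Bigr)^{t}\;<\;\Bigl(\frac{e^{5}(\log\log d)^{2\lambda}}{\sqrt{d\log d}}\Bigr)^{t},
\]
where $g(y)=\mathbb{P}\bigl(B(m,p)+y\ge B(n-m-t,p)\bigr)$. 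Multiplying by $\binom{n-m}{t}\le(en/t)^{t}$ and inserting the chosen $t$ gives an expected number of percolating $T$'s at most $(e^{6}(\log\log d)^{\lambda})^{t}=o(1)$ for $\lambda<0$. Hence with high probability no such $T$ exists, so $|I\setminus I_0|<t=o(n)$ and percolation fails. No wave tracking, no $J_{-v}$, and the only union bound needed is the crude one over $\binom{n}{t}$ sets.
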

\begin{rem}
Note that Lemma \ref{lowerlem} and Corollary \ref{uppercor} prove Theorem \ref{MajorBoot:mainthm}.
\end{rem}

In fact to prove Lemma \ref{lowerlem}, as might be expected, we shall show that, with high probability, the $MB(n,p\ ;m)$ process terminates with $I$ (the set of eventual infected vertices) only slightly larger than $\mid I_0 \mid=m$. We shall do this by bounding the expected number of sets of some size that could be the first vertices to be infected. 

\begin{proof}
 We say that a set of vertices $T$ percolates if all of its vertices will be infected eventually. For $T\subset I\setminus I_0$ we can order the vertices of $I_0\cup T$ by the time they get infected. That is, take any order of $T$ such that a vertex from $I_{j}$ is infected before any vertex from $I_{j'}$ if $j<j'$. Notice that for each $v\in T$ the majority of its neighbours (in the whole graph) are in the set of its predecessors in this order. Our strategy will be to show that if $\lambda < 0$ then, with high probability, there is no percolating set $T$ of a particular size and thus the $MB(n,p\ ;m)$ process does not percolate.

\ind Set $t=\left|T\right|$, and denote by $E=E(T)$ the edge set of $T$. Write $d_E(i)$ for the degree within $T$ of a vertex $i\in T$. 
 We want to bound the probability that $T$ percolates. To do so, we modify the infection rule within $T$ so that the vertices inside $T$ consider their neighbours in $T$ to be already infected, regardless of their real state at any particular time step. The latter assumption only increases the probability and, more importantly, makes the events for vertices in $T$ to be infected independent. This is because these events now only depend on how many edges each vertex has to $I_{0}$ and $V(G)/\left(I_{0}\cup T\right)$. Conditioning on $ E $, and then taking the expectation give
\begin{equation}
\label{MajorBoot:lb}
\mathbb{P}(T\mbox{ percolates})\leq\sum_{E}\mathbb{P}(E)\prod_{i=1}^t\mathbb{P}(B(m,p)+d_E(i)\geq B(n-m-t,p))
\end{equation} 
(for independent random variables $ B(m,p) $ and $ B(n-m-t,p) $).
\ind Denote $g(x)= \mathbb{P}\left(B(m,p)+x\geq B(n-m-t,p)\right)$. Due to the $\log$-concavity of $g$ (Corollary~\ref{MajorBoot:logconcor}) we have for integers $x,y,$ that
$$g(x)\leq g(y)\left(\frac{g(y+1)}{g(y)}\right)^{x-y}.$$
Using the latter inequality with $x=d_E(i)$ and $y=\lceil pt\rceil$, we can bound (\ref{MajorBoot:lb}) by
\begin{align*}
&\sum_{E}\mathbb{P}(E)\prod_{i=1}^{t}g(y)\left(\frac{g(y+1)}{g(y)}\right)^{d_{E}(i)-y}\\
&=\sum_{E}\mathbb{P}(E)g(y)^t\left(\frac{g(y+1)}{g(y)}\right)^{2|E|-ty}\\
&=\sum_{j=0}^{\binom{t}{2}}\binom{\binom{t}{2}}{j}p^{j}(1-p)^{\binom{t}{2}-j}g(y)^t\left(\frac{g(y+1)}{g(y)}\right)^{2j-ty}\\
&=\left(1-p+p\left(\frac{g(y+1)}{g(y)}\right)^{2}\right)^{\binom{t}{2}}\left(\frac{g(y)}{g(y+1)}\right)^{pt^2}g(y)^t.
\end{align*}

\ind Substituting $\frac{g(y+1)}{g(y)}=1+a$ and the elementary inequality $1/(1+a)\leq 1-a+a^2$, we bound the latter expression by 
\begin{align}
\label{MajorBoot:lbnicea}
&\left(1-p+p(1+a)^{2}\right)^{\binom{t}{2}}\left(1-a+a^2\right)^{ty}g(y)^t\nonumber\\
&\leq \exp \left((2ap+a^{2}p)\frac{t^2}{2}+(a^2-a)pt^2\right)g(y)^t\nonumber\\
&= \left(\exp \left(\frac{3pa^{2}t}{2}\right)g(y)\right)^t.
\end{align}

\ind We have by definition that $g(y)$ is equal to

$$g(y)=\mathbb{P}\left(X_1+X_2\geq \mu_1+\mu_2+pn-2pm-pt-\lceil pt\rceil\right),$$
where $X_1=B(m,p)$ with mean $\mu_1$ and $X_2=B(n-m-t,(1-p))$ with mean $\mu_2$. Setting $t=\left\lfloor n(\log \log d)^{\lambda}/\sqrt{d\log d}\right\rfloor$ and using Proposition~\ref{MajorBoot:twobinomupper} with $N=\frac{n-t}{2}$, $S=\frac{n-2m-t}{2}$ and $h=p(n-2m-t)-y$  to bound $g(y)$, we obtain (for $ n $ large enough)
\begin{align*}
g(y)&<\frac{\sqrt{p(1-p)(n-t)}}{pn-2pm-2pt-1}\exp\left(-\frac{(pn-2pm-2pt-1)^2}{2p(1-p)(n-t)}+3+o(1)\right)\\
&<\frac{e^3}{\sqrt{\log d}}\exp\left(-\frac{\log d}{2}+2\lambda\log\log\log d+O((\log\log d)^{\lambda})\right)\\
&<\left(\frac{e^4(\log\log d)^{2\lambda}}{\sqrt{d\log d}}\right),
\end{align*}
when $\lambda<0$.\\
\ind We can also bound $g(y)$ from below by Proposition~\ref{MajorBoot:twobinomlower}
\begin{align}
\label{MajorBoot:lbg(y)}
g(y)&>\frac{\sqrt{p(1-p)(n-t)}}{2\pi(pn-2pm-2pt)}\exp\left(-\frac{(pn-2pm-2pt)^2}{2p(1-p)(n-t)}-4-o(1)\right)\nonumber\\
&>\frac{1}{2\pi e^4\sqrt{\log d}}\exp\left(-\frac{\log d}{2}+2\lambda\log\log\log d+O((\log\log d)^{\lambda})\right)\nonumber\\
&>\left(\frac{(\log\log d)^{2\lambda}}{e^6\sqrt{d\log d}}\right),
\end{align} 
when $\lambda<0$ (and $ n $ is large enough).\\ 
\ind By definition of $g$ we have that   $$g(y+1)=g(y)+\mathbb{P}(B(m,p)+y+1=B(n-m-t,p)).$$ Let us write $z=\mathbb{P}(B(m,p)+y+1=B(n-m-t,p))$ for convenience. We shall now obtain an upper bound for $z$. Using Proposition~\ref{MajorBoot:twobinomdiffupper} with $T=-\frac{y+1}{p}$, $N=\frac{n-t+T}{2}$ and $S=N-m$, we obtain
\begin{align*}
z &<\frac{\frac{n}{2}-m-2t}{2\pi(1-p)(\frac{n}{2}-2t-\frac{2}{p})}\exp\left(-\frac{2p(\frac{n}{2}-m-2t-\frac{2}{p})^2}{(1-p)(n-t)}+o(1)\right)\\
&{}+\frac{3}{\pi p(\frac{n}{2}-m-2t-\frac{2}{p})}\exp\left(-\frac{9p(\frac{n}{2}-m-2t-\frac{2}{p})^2}{8(1-p)(\frac{n}{2}-2t-\frac{1}{p})}\right)\\
&<\frac{\sqrt{\log d}}{2\pi(1-p)\sqrt{d}}\exp\left(-\frac{\log d}{2}+2\lambda\log\log\log d+o(1)\right)\\
&{}+\frac{6\sqrt{d}}{\pi pn\sqrt{\log d}}\exp\left(-\frac{9\log d}{16}+\frac{9\lambda\log\log\log d}{4}+o(1)\right).
\end{align*}

\ind The first term is much larger than the second, and so we obtain (for $ n $ large enough) the inequality
\begin{equation}
\label{MajorBoot:lbz}
z<\frac{\sqrt{\log d}(\log\log d)^{2\lambda}}{\pi(1-p)d}.
\end{equation}

\ind We have that $a=\frac{z}{g(y)},$ and so from (\ref{MajorBoot:lbg(y)}) and (\ref{MajorBoot:lbz}) (for $ n $ large enough)
\begin{align*}
a<\frac{e^6\log d}{\pi(1-p)\sqrt{d}}<\frac{e^5\log d}{(1-p)\sqrt{d}}.
\end{align*}

\ind We can now bound the expression in (\ref{MajorBoot:lbnicea}) (for $ n $ large enough) by
\begin{align*}
\mathbb{P}(T\mbox{ percolates})&<\left(\exp\left(\frac{3pe^{10}(\log d)^2n(\log\log d)^{\lambda}}{2(1-p)^2d\sqrt{d\log d}}\right)\frac{e^4(\log\log d)^{2\lambda}}{\sqrt{d\log d}}\right)^t\\
&<\left(\frac{e^5(\log\log d)^{2\lambda}}{\sqrt{d\log d}}\right)^t
\end{align*}
(where the second inequality follows since the exponent in the exponential is $ o(1) $).

\ind The expected number of sets of size $t$ that percolates is (for $ n $ large enough)
\begin{align*}
\binom{n-m}{t}\mathbb{P}(T\mbox{ percolates})&<\binom{n}{t}\left(\frac{e^5(\log\log d)^{2\lambda}}{\sqrt{d\log d}}\right)^t\\
&< \left(\frac{e^6n(\log\log d)^{2\lambda}}{t\sqrt{d\log d}}\right)^t,
\end{align*}
because $\binom{n}{t}\leq\left(\frac{en}{t}\right)^t$. We chose $t=\left\lfloor\frac{n(\log\log d)^\lambda}{\sqrt{d\log d}}\right\rfloor,$ and so the expected number of sets of size $t$ that percolates is bounded above by
\begin{equation*}
(e^6(\log\log d)^{\lambda})^t=o(1).
\end{equation*}



Therefore, with high probability, percolation does not occur for $\lambda<0$.
\end{proof}

\section{Inequalities}\label{ineqineq}

\ind We begin this section with some remarks on the log-concavity of the distribution function of the Binomial distribution. These results are standard, see for example \cite{keilson1971some}, but we prove them for completeness. 

\begin{prop}
\label{MajorBoot:logcon}
The sum of independent Bernoulli random variables is log-concave, that is if $X_i$ are independent Bernoulli random variables with means $p_i,$ then for any $k$ we have,
$$\mathbb{P}(\sum_{i=1}^n X_i=k-1)\mathbb{P}(\sum_{i=1}^n X_i=k+1)\leq (\mathbb{P}(\sum_{i=1}^n X_i=k))^2.$$
\end{prop}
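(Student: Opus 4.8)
The plan is to prove the statement by induction on $n$, the only real ingredient being that convolving a log-concave sequence with a two-point (Bernoulli) distribution preserves log-concavity. I would first dispose of the degenerate coordinates: if some $p_i\in\{0,1\}$, the law of $\sum_{i=1}^n X_i$ is merely a shift of the law of the sum of the remaining variables, so without loss of generality $p_i\in(0,1)$ for every $i$. Writing $a^{(n)}_k=\mathbb{P}\bigl(\sum_{i=1}^n X_i=k\bigr)$, this ensures $a^{(n)}_k>0$ for $0\le k\le n$ and $a^{(n)}_k=0$ otherwise; in particular the support is an interval of integers. The base case $n=1$ is immediate, since $a^{(1)}$ is supported on $\{0,1\}$ and the inequality reads $0\le (1-p_1)^2$ or $0\le p_1^2$ at every $k$.

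For the inductive step, write $S_n=S_{n-1}+X_n$, so that $a^{(n)}_k=p_n a^{(n-1)}_{k-1}+(1-p_n)a^{(n-1)}_k$. It therefore suffices to prove the following lemma: if $(a_k)$ is a nonnegative log-concave sequence whose support is an interval of integers and $q\in[0,1]$, then $c_k:=q a_{k-1}+(1-q)a_k$ is again log-concave. Expanding directly,
\[
c_k^2-c_{k-1}c_{k+1}=q^2\bigl(a_{k-1}^2-a_{k-2}a_k\bigr)+(1-q)^2\bigl(a_k^2-a_{k-1}a_{k+1}\bigr)+q(1-q)\bigl(a_{k-1}a_k-a_{k-2}a_{k+1}\bigr).
\]
The first two parenthesised terms are nonnegative directly from the log-concavity of $(a_k)$, so the whole problem reduces to the inequality $a_{k-1}a_k\ge a_{k-2}a_{k+1}$. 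When $a_{k-1}a_k>0$ this follows by multiplying the two log-concavity inequalities $a_{k-1}^2\ge a_{k-2}a_k$ and $a_k^2\ge a_{k-1}a_{k+1}$ and cancelling the common positive factor $a_{k-1}a_k$; when $a_{k-1}a_k=0$ one checks, using that the support of $(a_k)$ is an interval, that at least one of $a_{k-2},a_{k+1}$ must vanish, so the inequality is trivially $0\ge 0$. Applying the lemma with $a_k=a^{(n-1)}_k$ and $q=p_n$ completes the induction.

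The only obstacle here is a minor bookkeeping one: making the zero cases in the lemma airtight, i.e.\ verifying that a nonnegative log-concave sequence with interval support admits no configuration with $a_{k-2},a_{k+1}\ne 0$ but $a_{k-1}a_k=0$ (this is immediate from a short case analysis on which of $a_{k-1},a_k$ is zero). The initial reduction to $p_i\in(0,1)$ keeps this painless, since then $a^{(n-1)}$ is strictly positive on its support and the zeros only ever occur at the genuine boundary; alternatively one can simply verify the interior inequality for $1\le k\le n-1$ and observe that for all other $k$ one side of the claimed inequality vanishes. I would also remark that this lemma is the special case, for $b=(1-q,q)$, of the classical fact that the convolution of two nonnegative log-concave sequences is log-concave, which is what is meant by the reference to \cite{keilson1971some}.
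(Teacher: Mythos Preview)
Your proof is correct and follows essentially the same route as the paper: induction on $n$, the convolution recurrence $a^{(n)}_k=p_n a^{(n-1)}_{k-1}+(1-p_n)a^{(n-1)}_k$, and the reduction of the inductive step to the inequality $a_{k-2}a_{k+1}\le a_{k-1}a_k$, which follows from the log-concavity hypothesis. The paper's write-up is terser (it asserts the final inequality in one line and omits the zero/boundary bookkeeping you spell out), but the argument is the same.
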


\begin{proof}
We proceed by induction on $n,$ with the base case $n=1$ being trivial as one of the terms on the left hand side of the inequality is zero. Otherwise conditioning on $X_{n+1},$ and writing $f_{n,k}=\mathbb{P}(\sum_{i=1}^n X_i=k)$ we get, 

\begin{align*}
f_{n{+}1,k{-}1}f_{n{+}1,k{+}1} & =  (p_{n{+}1}f_{n,k{-}2}{+}(1{-}p_{n{+}1})f_{n,k{-}1})(p_{n{+}1}f_{n,k}{+}(1{-}p_{n{+}1})f_{n,k{+}1})  \\
                 & \leq  (p_{n{+}1}f_{n,k{-}1}{+}(1{-}p_{n{+}1})f_{n,k})^2  \\ & = (f_{n+1,k})^2
\end{align*} 

\ind The inequality follows as $f_{n,k{-}2}f_{n,k{+}1}\leq f_{n,k-1}f_{n,k}$ is implied by the induction hypothesis.
\end{proof}

\begin{prop}
\label{MajorBoot:logconcum}
The cumulative distribution of a discrete non-negative log-concave random variable $X$ is log-concave, that is for all $k,$
$$\mathbb{P}(X\leq k-1)\mathbb{P}(X\leq k+1)\leq (\mathbb{P}(X\leq k))^2.$$
\end{prop}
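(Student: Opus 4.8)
The plan is to deduce the log-concavity of the cumulative distribution function $F(k)=\mathbb{P}(X\le k)$ from the log-concavity of the point-mass function $f(k)=\mathbb{P}(X=k)$, using the standard fact that log-concavity is preserved under partial summation (equivalently, that a convolution of a log-concave sequence with the all-ones indicator of an initial segment is log-concave). Concretely, I want to show
$$F(k-1)F(k+1)\le F(k)^2,$$
which, after writing $F(k+1)=F(k)+f(k+1)$ and $F(k-1)=F(k)-f(k)$, is equivalent to
$$f(k+1)F(k-1)\le f(k)F(k).$$
So the whole statement reduces to proving this last inequality for every $k$ (for which $F(k)\ne 0$; when $F(k)=0$ the original inequality is trivial since the left side vanishes).

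The key step is then a term-by-term comparison. Writing out $f(k+1)F(k-1)=\sum_{j\le k-1}f(k+1)f(j)$ and $f(k)F(k)=\sum_{j\le k}f(k)f(j)$, I would like to match up the $j$-th term on the left with the $(j+1)$-th term on the right, i.e. compare $f(k+1)f(j)$ with $f(k)f(j+1)$. Log-concavity of $f$ gives $f(i-1)f(i+1)\le f(i)^2$, which iterates to the ``increasing ratios'' statement: for $j\le k$, $f(k+1)f(j)\le f(k)f(j+1)$ (the ratio $f(i+1)/f(i)$ is non-increasing, so $f(k+1)/f(k)\le f(j+1)/f(j)$, after handling zeros). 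Summing $f(k+1)f(j)\le f(k)f(j+1)$ over $j=-\infty,\dots,k-1$ (only finitely many nonzero terms since $X$ is non-negative and $F(k)<\infty$) yields $f(k+1)F(k-1)\le f(k)F(k)$, i.e. $\sum_{j\le k-1}f(k+1)f(j)\le\sum_{j\le k-1}f(k)f(j+1)=\sum_{1\le i\le k}f(k)f(i)\le\sum_{0\le i\le k}f(k)f(i)=f(k)F(k)$, as desired.

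The main obstacle — really the only subtlety — is the careful handling of zeros in the point-mass function, since $f$ having internal zeros would break the naive ``divide by $f(k)$'' argument; this is where the standard convention is that a log-concave sequence has no internal zeros (its support is an interval), or alternatively one argues the chained inequality $f(k+1)f(j)\le f(k)f(j+1)$ directly by induction on $k-j$ using $f(k+1)f(k-1)\le f(k)^2$ without ever dividing. I would phrase the argument via this chained inequality to sidestep the division entirely: fix $j$ and induct on $k\ge j$, the base case $k=j$ being $f(j+1)f(j)\le f(j)f(j+1)$ trivially, and the inductive step multiplying the hypothesis $f(k)f(j)\le f(k-1)f(j+1)$ by $f(k+1)$ and applying $f(k+1)f(k-1)\le f(k)^2$. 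Everything else is the bookkeeping of the two displayed reductions above, which is routine.

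\begin{proof}
Write $f(k)=\mathbb{P}(X=k)$ and $F(k)=\mathbb{P}(X\le k)=\sum_{j\le k}f(j)$, with $f(k)=F(k)=0$ for $k<0$. We must show $F(k-1)F(k+1)\le F(k)^2$ for all $k$. If $F(k)=0$ then $F(k-1)=0$ as well and the inequality is trivial, so assume $F(k)>0$. Since $F(k+1)=F(k)+f(k+1)$ and $F(k-1)=F(k)-f(k)$, the claimed inequality is equivalent to
$$(F(k)-f(k))(F(k)+f(k+1))\le F(k)^2,$$
that is, to
$$f(k+1)F(k-1)\le f(k)F(k).$$

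We first prove that for all integers $j$ and all $k\ge j$,
\begin{equation}
\label{MajorBoot:chained}
f(k+1)f(j)\le f(k)f(j+1).
\end{equation}
Fix $j$ and induct on $k\ge j$. For $k=j$ the inequality reads $f(j+1)f(j)\le f(j)f(j+1)$, which holds with equality. Assume \eqref{MajorBoot:chained} holds for some $k\ge j$, i.e. $f(k+1)f(j)\le f(k)f(j+1)$. Multiplying by $f(k+2)$ gives
$$f(k+2)f(k+1)f(j)\le f(k+2)f(k)f(j+1)\le f(k+1)^2 f(j+1),$$
where the second step uses the log-concavity of $f$ (Proposition~\ref{MajorBoot:logcon}) in the form $f(k)f(k+2)\le f(k+1)^2$. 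If $f(k+1)>0$ we may cancel it to obtain $f(k+2)f(j)\le f(k+1)f(j+1)$, completing the induction. If $f(k+1)=0$, then $f(k+1)f(j)=0\le f(k)f(j+1)$ and also, since $f$ is log-concave and hence has no internal zeros, $f(i)=0$ for all $i\ge k+1$; in particular $f(k+2)f(j)=0\le f(k+1)f(j+1)$, again completing the step.

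Now sum \eqref{MajorBoot:chained} over all $j\le k-1$ (only finitely many terms are nonzero, as $X\ge 0$):
$$f(k+1)F(k-1)=\sum_{j\le k-1}f(k+1)f(j)\le\sum_{j\le k-1}f(k)f(j+1)=\sum_{i\le k}f(k)f(i)=f(k)F(k),$$
where we reindexed $i=j+1$ and used $\sum_{1\le i\le k}\le\sum_{0\le i\le k}$ with $f\ge 0$. This is exactly the inequality $f(k+1)F(k-1)\le f(k)F(k)$ established above to be equivalent to the claim, so $F(k-1)F(k+1)\le F(k)^2$.
\end{proof}
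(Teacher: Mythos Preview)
Your proof is correct and follows essentially the same approach as the paper: both reduce $F(k-1)F(k+1)\le F(k)^2$ to the inequality $f(k+1)F(k-1)\le f(k)F(k)$, and both obtain the latter by summing the term-by-term comparison $f(k+1)f(j)\le f(k)f(j+1)$ over $j\le k-1$. The paper compresses all of this into two displayed lines and invokes log-concavity without spelling out the chained inequality or the treatment of zeros, whereas you make those steps explicit; but the underlying argument is the same.
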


\begin{proof}
Setting $r_i=\mathbb{P}(X=i)$ we get by Proposition~\ref{MajorBoot:logcon},
$$(r_0+\ldots+r_{k-1})r_{k+1} \leq (r_1+\ldots+r_k)r_k +r_kr_0,$$
and so,
$$(r_0+\ldots+r_{k-1})(r_0+\ldots+r_{k+1})\leq (r_0+\ldots+r_k)^2.$$ 
\end{proof}

\ind When $X$ is the sum of $n$ independent Bernoulli random variables, we can rewrite $X=n-Y,$ where $Y$ is also the sum of $n$ independent Bernoulli random variables, and so Proposition~\ref{MajorBoot:logconcum} is still true if we replace $\leq$, with $<$, $>$ or $\geq$.

\begin{cor}
\label{MajorBoot:logconcor}
The cumulative distribution of the sum or difference of independent binomial random variables is log-concave. 
\end{cor}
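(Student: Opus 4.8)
The goal is Corollary~\ref{MajorBoot:logconcor}: the cumulative distribution function of $X+Y$ (or $X-Y$) is log-concave, where $X\sim B(n_1,p_1)$ and $Y\sim B(n_2,p_2)$ are independent binomial random variables. The plan is to reduce both assertions to Proposition~\ref{MajorBoot:logconcum} (log-concavity of the CDF of a discrete non-negative log-concave variable) together with Proposition~\ref{MajorBoot:logcon} (log-concavity of the pmf of a sum of independent Bernoulli variables).

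First I would treat the sum $X+Y$. Since $X$ is a sum of $n_1$ independent Bernoulli($p_1$) variables and $Y$ is a sum of $n_2$ independent Bernoulli($p_2$) variables, $X+Y$ is itself a sum of $n_1+n_2$ independent Bernoulli random variables, hence by Proposition~\ref{MajorBoot:logcon} its pmf is log-concave. It is non-negative and integer-valued, so Proposition~\ref{MajorBoot:logconcum} applies directly and gives that its CDF is log-concave. Next, for the difference $X-Y$: write $Y=n_2-Y'$ where $Y'$ is again a sum of $n_2$ independent Bernoulli($1-p_2$) variables. Then $X-Y = (X+Y') - n_2$, and $X+Y'$ is a sum of $n_1+n_2$ independent Bernoulli variables, so by the same argument its CDF is log-concave; shifting by the constant $n_2$ does not affect log-concavity of the CDF (the inequality $\mathbb{P}(Z\le k-1)\mathbb{P}(Z\le k+1)\le \mathbb{P}(Z\le k)^2$ is translation-invariant in $k$). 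Hence the CDF of $X-Y$ is log-concave as well.

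I would also remark, as the paragraph preceding the corollary does, that because these variables are finite sums of independent Bernoullis one may rewrite $Z=N-W$ with $W$ of the same type, so the conclusion holds with $\le$ replaced by $<$, $>$, or $\ge$ throughout — which is the form actually used in Corollary~\ref{MajorBoot:logconcor}'s applications (e.g.\ in the proofs of Lemma~\ref{MajorBoot:nastyprob} and Lemma~\ref{lowerlem}). There is essentially no obstacle here: the only mild point to be careful about is the bookkeeping that turns a difference into a sum via the $n_2-Y'$ substitution, and the observation that constant shifts in the index $k$ preserve the defining inequality; both are immediate. The proof is a two- or three-line reduction.

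\begin{proof}
Let $X$ and $Y$ be independent binomial random variables. Since $X$ and $Y$ are each sums of independent Bernoulli random variables, $X+Y$ is also a sum of independent Bernoulli random variables, so by Proposition~\ref{MajorBoot:logcon} its probability mass function is log-concave; as $X+Y$ is non-negative and integer-valued, Proposition~\ref{MajorBoot:logconcum} shows that its cumulative distribution function is log-concave. For the difference, write $Y=n_2-Y'$ where $n_2$ is the number of trials for $Y$ and $Y'$ is again a sum of independent Bernoulli random variables. Then $X-Y=(X+Y')-n_2$, and $X+Y'$ is a sum of independent Bernoulli random variables, so by the case already treated its cumulative distribution function is log-concave; since the inequality $\mathbb{P}(Z\leq k-1)\mathbb{P}(Z\leq k+1)\leq(\mathbb{P}(Z\leq k))^2$ is unchanged under replacing $k$ by $k+n_2$, the cumulative distribution function of $X-Y$ is log-concave as well. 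Finally, writing $X+Y$ (respectively $X-Y$) in the form $N-W$ with $W$ a sum of independent Bernoulli random variables, the remark following Proposition~\ref{MajorBoot:logconcum} shows that the inequality also holds with $\leq$ replaced by $<$, $>$ or $\geq$.
\end{proof}
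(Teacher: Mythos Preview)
Your proof is correct and follows essentially the same approach as the paper: both observe that sums and differences of independent binomials can be written as a sum of independent Bernoulli random variables shifted by a constant, and then invoke Propositions~\ref{MajorBoot:logcon} and~\ref{MajorBoot:logconcum}. Your version is slightly more explicit about the $Y=n_2-Y'$ substitution and the translation-invariance of the log-concavity inequality, but the argument is the same.
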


\begin{proof}
Sums and differences of independent binomial random variables are also sums of independent Bernoulli random variables plus a constant, and so are log-concave. 
\end{proof} 

\ind A substantial part of this section is now taken up with providing tight bounds, up to a constant factor, on binomial probabilities and their sums.

\begin{prop}
\label{MajorBoot:binomlower}
Suppose $pn\geq 1$ and $k=pn+h<n$, where $h>0$. Set
$$\beta = \frac{1}{12k}+\frac{1}{12(n-k)},$$
then $\mathbb{P}(B(n,p)=k)$ is at least
$$\frac{1}{\sqrt{2\pi p(1{-}p)n}}\exp{\left({-}\frac{h^2}{2p(1{-}p)n}-\frac{h^3}{2(1{-}p)^2n^2}-\frac{h^4}{3p^3n^3}-\frac{h}{2pn}-\beta\right)}.$$
\end{prop}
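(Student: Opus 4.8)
The plan is to estimate $\mathbb{P}(B(n,p)=k) = \binom{n}{k}p^k(1-p)^{n-k}$ directly via Stirling's formula, keeping careful track of the error terms so that everything lands on the correct side of the inequality. Write $k = pn+h$, so $n-k = (1-p)n-h$. First I would invoke Stirling in the two-sided form $\sqrt{2\pi N}\,N^N e^{-N} e^{1/(12N+1)} \le N! \le \sqrt{2\pi N}\,N^N e^{-N} e^{1/(12N)}$; applying the lower bound to $n!$ and the upper bounds to $k!$ and $(n-k)!$ produces
\begin{equation*}
\binom{n}{k} \ge \frac{1}{\sqrt{2\pi}}\sqrt{\frac{n}{k(n-k)}}\left(\frac{n}{k}\right)^{k}\left(\frac{n}{n-k}\right)^{n-k} e^{-\beta'},
\end{equation*}
where $\beta' = \frac{1}{12k}+\frac{1}{12(n-k)} - \frac{1}{12n+1}$, which is at most the $\beta$ in the statement. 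Multiplying by $p^k(1-p)^{n-k}$ gives $\mathbb{P}(B(n,p)=k) \ge \frac{1}{\sqrt{2\pi}}\sqrt{\frac{n}{k(n-k)}}\,\exp(-nH)\,e^{-\beta}$, where $nH = k\log\frac{k}{pn} + (n-k)\log\frac{n-k}{(1-p)n}$ is the relative-entropy exponent.

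The core of the argument is then a one-variable estimate of $nH$ as a function of $h$. Substituting $k/(pn) = 1 + h/(pn)$ and $(n-k)/((1-p)n) = 1 - h/((1-p)n)$, I would expand both logarithms. For the first term use $\log(1+u) \le u - \frac{u^2}{2} + \frac{u^3}{3}$ with $u = h/(pn) > 0$; for the second term use $\log(1-v) \le -v - \frac{v^2}{2} - \frac{v^3}{3} - \ldots \le -v - \frac{v^2}{2}$ with $v = h/((1-p)n)$ — actually I want an upper bound on $nH$, hence a lower bound on the contribution $-(n-k)v - \ldots$, so I must be careful about signs: $(n-k)\log(1-v)$ is negative and I need it not too negative, i.e. I need $\log(1-v) \ge -v - \frac{v^2}{2(1-v)}$ or a comparable bound that I can then control crudely. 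After the linear terms cancel against $k\cdot u$ (the standard $\sum (\text{obs})\log(\text{obs}/\text{exp})$ telescoping), what survives is $nH \le \frac{h^2}{2p(1-p)n} + (\text{cubic and higher corrections})$, and the job is to show the correction is at most $\frac{h^3}{2(1-p)^2n^2} + \frac{h^4}{3p^3 n^3} + \frac{h}{2pn}$. The extra $\frac{h}{2pn}$ term in the statement is the slack needed to absorb the $\sqrt{n/(k(n-k))}$ prefactor: since $k(n-k) \le p(1-p)n^2 \cdot (\text{something} \ge 1)$ is false in general, one instead writes $\sqrt{\frac{n}{k(n-k)}} = \frac{1}{\sqrt{p(1-p)n}}\cdot\sqrt{\frac{p(1-p)n^2}{k(n-k)}}$ and bounds $\sqrt{\frac{p(1-p)n^2}{k(n-k)}} = \left(1+\frac{h}{pn}\right)^{-1/2}\left(1-\frac{h}{(1-p)n}\right)^{-1/2} \ge \left(1+\frac{h}{pn}\right)^{-1/2} \ge \exp(-\frac{h}{2pn})$ using $(1-v)^{-1/2}\ge 1$ and $(1+u)^{-1/2} \ge e^{-u/2}$.

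I would organize the write-up as: (i) state Stirling and derive the $\binom{n}{k}$ lower bound with the $\beta$ error; (ii) convert to $\mathbb{P}(B(n,p)=k) \ge (2\pi p(1-p)n)^{-1/2}\exp(-nH - \frac{h}{2pn} - \beta)$; (iii) the entropy expansion bounding $nH$ by $\frac{h^2}{2p(1-p)n} + \frac{h^3}{2(1-p)^2n^2} + \frac{h^4}{3p^3n^3}$. The main obstacle is step (iii): getting the Taylor remainders of the two logarithms to combine into exactly those three explicitly-shaped terms without any hidden constants, particularly handling $\log(1-v)$ where the natural series has all-negative terms and one must bound a negative quantity from below — the cleanest route is probably to fix $p,n$ and treat $\phi(h) := nH(h) - \frac{h^2}{2p(1-p)n}$, show $\phi(0)=\phi'(0)=0$, and bound $\phi''$ or $\phi'''$ termwise, though the asymmetric roles of the $p$ and $1-p$ denominators in the target suggest the authors instead just expand each logarithm separately with a convenient explicit remainder bound and add. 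I expect no conceptual difficulty beyond this bookkeeping; it is a matter of choosing the Taylor truncations so the signs and the denominators $p^3n^3$, $(1-p)^2n^2$ match.
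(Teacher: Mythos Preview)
Your approach is sound in outline --- Stirling plus a careful Taylor expansion of the relative-entropy exponent is exactly the standard route to bounds of this shape --- but you should know that the paper does not actually prove this proposition: its entire ``proof'' is the sentence ``This is Theorem~1.5 in \cite{bollobas2001random}, p.~12.'' So there is nothing to compare against except the cited reference, and your sketch is essentially the method Bollob\'as uses there.

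One small caution on your step~(iii): the alternating-series bound $\log(1+u)\le u - u^2/2 + u^3/3$ you invoke for $u=h/(pn)$ only holds as written for $0<u\le 1$, whereas the proposition's hypotheses allow $u$ to be arbitrarily large (only $pn\ge 1$ and $k<n$ are assumed). The target bound is vacuous once $h$ is large enough that the $h^4/(3p^3n^3)$ term dominates, so this is not a genuine obstruction, but if you write the proof out you will need either to handle the large-$u$ regime separately with a cruder argument or to use a remainder form of Taylor that is valid for all $u>0$. The same remark applies to the $\log(1-v)$ expansion.
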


\begin{proof}
This is Theorem 1.5 in \cite{bollobas2001random}, p. 12.
\end{proof}

\begin{cor}
\label{MajorBoot:binomlowercor}
Suppose $p(1-p)n=\omega(n)$ and $k=pn+h,$ where $0<h=o\left((p(1-p)n)^{\frac{2}{3}}\right),$ then
$$\mathbb{P}(B(n,p)=k)>\frac{1}{\sqrt{2\pi p(1-p)n}}\exp\left(-\frac{h^2}{2p(1-p)n}-o(1)\right).$$
\end{cor}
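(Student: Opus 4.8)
The plan is to derive Corollary~\ref{MajorBoot:binomlowercor} directly from Proposition~\ref{MajorBoot:binomlower} by showing that, under the weaker hypotheses $p(1-p)n=\omega(n)$ and $0<h=o\big((p(1-p)n)^{2/3}\big)$, every one of the correction terms in the exponent of Proposition~\ref{MajorBoot:binomlower} other than the main Gaussian term $-\frac{h^2}{2p(1-p)n}$ is $o(1)$, and that the prefactor $\frac{1}{\sqrt{2\pi p(1-p)n}}$ is already exactly the one claimed. So essentially all the work is bounding five error terms.

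First I would record the consequences of the hypotheses: writing $N:=p(1-p)n\to\infty$, we have $h=o(N^{2/3})$, hence in particular $h=o(n^{2/3})$ (since $N\le n$) and $h^2=o(N^{4/3})=o(N)\cdot o(N^{1/3})$, so already $\frac{h^2}{2p(1-p)n}=o(N^{1/3})$ — though we keep this term as the stated main term rather than discarding it. We also note $h<k=pn+h<n$ is assumed, and we may take $n$ large. Then I would handle the terms one at a time. For $\frac{h}{2pn}$: this is at most $\frac{h}{2N}$ up to the factor $1-p$, more precisely $\frac{h}{2pn}=\frac{(1-p)h}{2N}=o(N^{-1/3})=o(1)$. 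For $\beta=\frac{1}{12k}+\frac{1}{12(n-k)}$: since $k=pn+h\ge pn\ge \tfrac12 N$ eventually (as $h>0$) and $n-k=(1-p)n-h\ge \tfrac12(1-p)n\ge \tfrac12 N$ eventually, we get $\beta=O(1/N)=o(1)$.

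The three remaining terms $\frac{h^3}{2(1-p)^2n^2}$, $\frac{h^4}{3p^3n^3}$, $\frac{h}{2pn}$ (the last already done) require using the cubic-root scaling of $h$. Here I would be slightly careful because the denominators involve $p^3n^3$ and $(1-p)^2n^2$ rather than powers of $N=p(1-p)n$, so the bound must use $h=o(N^{2/3})$ together with $p,1-p\le 1$. For $\frac{h^3}{2(1-p)^2n^2}$: bound $(1-p)^2n^2=\frac{N^2}{p^2}\ge N^2$, so this term is at most $\frac{h^3}{2N^2}=o\big(N^{2}\big)/(2N^2)=o(1)$, using $h^3=o(N^2)$. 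For $\frac{h^4}{3p^3n^3}$ one must be more delicate since $p$ can be small; but note $h<k<n$ forces nothing useful directly, so instead I would use $h=o(N^{2/3})=o((pn)^{2/3})$ (valid since $1-p\le 1$ gives $N\le pn$), whence $h^4=o((pn)^{8/3})$ and $\frac{h^4}{p^3n^3}=o\big((pn)^{-1/3}\big)=o(1)$ provided $pn\to\infty$; and $pn\ge N\to\infty$, so this is fine. Summing, all four correction terms plus $\beta$ are $o(1)$, so Proposition~\ref{MajorBoot:binomlower} gives
\[
\mathbb{P}(B(n,p)=k)\ge \frac{1}{\sqrt{2\pi p(1-p)n}}\exp\left(-\frac{h^2}{2p(1-p)n}-o(1)\right),
\]
which is the claim; the strict inequality is inherited from Proposition~\ref{MajorBoot:binomlower}.

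I do not expect a genuine obstacle here — the statement is a routine asymptotic simplification — but the one place to be careful is the term $\frac{h^4}{3p^3n^3}$, where the denominator is $p^3n^3$ rather than $N^3$: the argument must exploit that the hypothesis is stated with $p(1-p)n$, and that $p(1-p)n\le pn$, to convert $h=o((p(1-p)n)^{2/3})$ into $h=o((pn)^{2/3})$ before estimating. A secondary minor point is justifying the eventual lower bounds $k\ge \tfrac12 p(1-p)n$ and $n-k\ge\tfrac12 p(1-p)n$ used to control $\beta$; these follow from $h>0$ and $h=o(n)$ together with $\min(p,1-p)\le 1$, but should be stated explicitly since $p$ and $1-p$ are not assumed bounded away from $0$ and $1$.
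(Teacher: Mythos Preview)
Your proposal is correct and takes essentially the same approach as the paper: apply Proposition~\ref{MajorBoot:binomlower} and verify that each of the four correction terms $\frac{h^3}{2(1-p)^2n^2}$, $\frac{h^4}{3p^3n^3}$, $\frac{h}{2pn}$, and $\beta$ is $o(1)$ under the stated hypotheses. The paper's proof is a two-line sketch that asserts these facts without justification; your version supplies the details, including the one genuinely non-obvious point you flagged, namely that $\frac{h^4}{3p^3n^3}$ must be handled via $p(1-p)n\le pn$ rather than via $N^3$ directly.
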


\begin{proof}
For $h$ in this range we have 
$$\frac{h^3}{2(1-p)^2n^2}+\frac{h^4}{3p^3n^3}+\frac{h}{2pn}=o(1).$$

\ind We also have that $k=\omega(n)$ and $n-k=\omega(n)$, and so the inequality follows from Proposition~\ref{MajorBoot:binomlower}.
\end{proof}

\begin{prop}
\label{MajorBoot:binomupper}
Suppose $pn\geq 1$ and $k\geq pn+h$, where $h(1-p)n\geq 3$. Then
$$\mathbb{P}(B(n,p)=k)<\frac{1}{\sqrt{2\pi p(1{-}p)n}}\exp\left({-}\frac{h^2}{2p(1{-}p)n}{+}\frac{h^3}{p^2n^2}{+}\frac{h}{(1{-}p)n}\right).$$
\end{prop}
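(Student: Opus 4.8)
The plan is to mirror the proof of Proposition \ref{MajorBoot:binomlower} (Theorem 1.5 in \cite{bollobas2001random}): estimate a single binomial term exactly via Stirling's formula, then reduce the tail statement $k \ge pn+h$ to the single-term statement by a ratio/monotonicity argument. First I would treat the case $k = pn+h$ exactly. Writing $\mathbb{P}(B(n,p)=k) = \binom{n}{k}p^k(1-p)^{n-k}$ and applying the two-sided Stirling bound $\sqrt{2\pi\ell}\,(\ell/e)^\ell \le \ell! \le \sqrt{2\pi\ell}\,(\ell/e)^\ell e^{1/(12\ell)}$ to the three factorials, one gets
$$
\mathbb{P}(B(n,p)=k) \le \frac{1}{\sqrt{2\pi p(1-p)n}}\cdot\sqrt{\frac{pn(1-p)n}{k(n-k)}}\cdot\left(\frac{pn}{k}\right)^{k}\left(\frac{(1-p)n}{n-k}\right)^{n-k}\cdot e^{1/(12k)}.
$$
The square-root prefactor is $\le 1$ since $k(n-k) \ge p(1-p)n^2$ is \emph{not} true in general — rather, I would just bound it crudely by noting $k = pn+h \ge pn$ and $n-k \le (1-p)n$, which already gives $\sqrt{pn/k}\le 1$ and leaves $\sqrt{(1-p)n/(n-k)}$; since $h>0$ this last factor exceeds $1$, so instead I absorb it into the exponential using $\tfrac12\log\frac{(1-p)n}{n-k} = -\tfrac12\log(1-\tfrac{h}{(1-p)n}) \le \tfrac{h}{(1-p)n}$ for the relevant range, which is where a piece of the claimed $\frac{h}{(1-p)n}$ term comes from.

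The heart is the exponent. Set $x = h/(pn) \in (0, (1-p)/p)$ so $k/(pn) = 1+x$ and $(n-k)/((1-p)n) = 1 - \frac{px}{1-p}$. Then
$$
k\log\frac{pn}{k} + (n-k)\log\frac{(1-p)n}{n-k} = -pn(1+x)\log(1+x) + (1-p)n\left(1-\tfrac{px}{1-p}\right)\log\left(1-\tfrac{px}{1-p}\right).
$$
Using $\log(1+x) \ge x - x^2/2$ (so that $-(1+x)\log(1+x) \le -x - x^2/2 + x^3/2$ after expanding $(1+x)(x-x^2/2)$, valid for $x>0$... more precisely one keeps enough terms) and $\log(1-y) \le -y - y^2/2$ with $y = px/(1-p)$, one collects: the linear terms in $h$ cancel (as always — $\mathbb{E}$-centering), the quadratic terms combine to $-\frac{h^2}{2p(1-p)n}$, and the cubic-and-higher contributions are bounded by $+\frac{h^3}{p^2n^2}$. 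Finally, $\beta$-type corrections $\frac{1}{12k} \le \frac{1}{12pn} \le \frac{h}{(1-p)n}$ when $h(1-p)n \ge 3$ — this hypothesis is exactly what lets me fold the Stirling error term into the stated $\frac{h}{(1-p)n}$. Combining all three pieces yields the claimed bound for $k = pn+h$.

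To pass from $k=pn+h$ to all $k \ge pn+h$, I would show the binomial probabilities are decreasing for $k\ge pn$: the ratio $\mathbb{P}(B(n,p)=k{+}1)/\mathbb{P}(B(n,p)=k) = \frac{(n-k)p}{(k+1)(1-p)} \le 1$ once $k \ge pn$, so $\mathbb{P}(B(n,p)=k) \le \mathbb{P}(B(n,p)=\lceil pn+h\rceil) \le \mathbb{P}(B(n,p) = pn+h')$ for the appropriate $h' \ge h$, and the exponent is monotone in $h$ in the regime considered — or, more simply, apply the single-term bound directly at the integer $k$ in question with its own $h_k = k - pn \ge h$, then check $-\frac{h_k^2}{2p(1-p)n} + \frac{h_k^3}{p^2n^2} + \frac{h_k}{(1-p)n}$ is maximized appropriately; care is needed since this is not globally decreasing in $h_k$, so the monotone-ratio argument is the clean route.

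\emph{Main obstacle.} The delicate point is the cubic bookkeeping in the exponent: one must expand both $-(1+x)\log(1+x)$ and $(1-y)\log(1-y)$ to third order with the \emph{correct sign} on the error and verify that the $p$-dependent denominators assemble into exactly $\frac{h^3}{p^2n^2}$ rather than, say, $\frac{h^3}{p(1-p)^2n^2}$ — the asymmetry between the $+h$ term being divided by $(1-p)n$ and the cube by $p^2n^2$ is a genuine feature of the bound and must be tracked honestly, unlike in the lower bound where more negative slack is available. I expect roughly one page of elementary but unforgiving inequality manipulation here.
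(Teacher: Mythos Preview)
The paper does not prove this proposition at all: its entire proof reads ``This is Theorem 1.2 of \cite{bollobas2001random}, p.~10.'' So there is nothing to compare your argument against within the paper itself. Your Stirling-plus-Taylor approach is the standard way such bounds are established (and is presumably close to what Bollob\'as does in the cited reference), so in spirit you are reconstructing the source rather than diverging from the paper.

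A couple of points where your sketch would need tightening before it constitutes a proof. First, the statement bounds $\mathbb{P}(B(n,p)=k)$ for a \emph{single} $k$, not a tail sum; the hypothesis ``$k\ge pn+h$'' just means $h$ is any value with $0<h\le k-pn$ and $h(1-p)n\ge 3$. Your paragraph about ``passing from $k=pn+h$ to all $k\ge pn+h$'' via monotonicity of the ratio is therefore aiming at the wrong target: what you actually need is that the bound, as a function of $h$ with $k$ fixed, is weakest at $h=k-pn$, or else simply prove it for $h=k-pn$ and note that this is how it is used downstream. Second, your handling of the prefactor $\sqrt{(1-p)n/(n-k)}$ and the Stirling remainder $e^{1/(12n)}$ (for $n!$ in the numerator, not $1/(12k)$) is a bit tangled; in the clean version one keeps $e^{1/(12n)}$ from the numerator factorial only and bounds it directly by something absorbed into $h/((1-p)n)$ using $h(1-p)n\ge 3$. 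These are bookkeeping issues rather than conceptual gaps, and your identification of the cubic-term assembly as the delicate step is accurate.
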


\begin{proof}
This is Theorem 1.2 of \cite{bollobas2001random}, p. 10.
\end{proof}

\begin{cor}
\label{MajorBoot:binomuppercor}
Suppose $p(1-p)n=\omega(n)$ and $k\geq pn+h,$ where   $$1<h=o((p(1-p)n)^{\frac{2}{3}}),$$ then
$$\mathbb{P}(B(n,p)=k)<\frac{1}{\sqrt{2\pi p(1-p)n}}\exp\left(-\frac{h^2}{2p(1-p)n}+o(1)\right).$$
\end{cor}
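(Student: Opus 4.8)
The plan is to apply Proposition~\ref{MajorBoot:binomupper} directly and then check that both of its correction terms are $o(1)$ in the stated range of $h$; this is the exact mirror of the proof of Corollary~\ref{MajorBoot:binomlowercor}.

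First I would record two elementary consequences of the hypothesis $p(1-p)n=\omega(n)$: since $0\le p\le 1$ we have $pn\ge p(1-p)n$ and $(1-p)n\ge p(1-p)n$, so each of $pn$ and $(1-p)n$ tends to infinity. In particular, because $h>1$ we get $h(1-p)n\ge (1-p)n\ge 3$ for $n$ large, and $pn\ge 1$, so the hypotheses of Proposition~\ref{MajorBoot:binomupper} hold (the statement is in any case only asserted for large $n$). Applying that proposition with $k\ge pn+h$ gives
$$\mathbb{P}(B(n,p)=k)<\frac{1}{\sqrt{2\pi p(1-p)n}}\exp\left(-\frac{h^2}{2p(1-p)n}+\frac{h^3}{p^2n^2}+\frac{h}{(1-p)n}\right),$$
so it remains only to verify $\frac{h^3}{p^2n^2}=o(1)$ and $\frac{h}{(1-p)n}=o(1)$.

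For the first term, from $h=o\big((p(1-p)n)^{2/3}\big)$ we get $h^3=o\big((p(1-p)n)^2\big)=o\big(p^2(1-p)^2n^2\big)$, hence $\frac{h^3}{p^2n^2}=o\big((1-p)^2\big)=o(1)$, using $(1-p)^2\le 1$. For the second term, $h=o\big((p(1-p)n)^{2/3}\big)=o\big(((1-p)n)^{2/3}\big)$ since $p^{2/3}\le 1$, so $\frac{h}{(1-p)n}=o\big(((1-p)n)^{-1/3}\big)=o(1)$ because $(1-p)n\to\infty$. Substituting these into the displayed bound gives the claimed inequality. There is no real obstacle here; the only point needing a moment's care is the first correction term, whose denominator is $p^2n^2$ rather than $(p(1-p)n)^2$, so one must exploit that $h$ is small relative to $(p(1-p)n)^{2/3}$ (not merely relative to $(pn)^{2/3}$) together with $(1-p)^2\le 1$. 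Everything else is routine.
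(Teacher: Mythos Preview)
Your proof is correct and follows exactly the same approach as the paper: apply Proposition~\ref{MajorBoot:binomupper} and verify that the two correction terms $\frac{h^3}{p^2n^2}$ and $\frac{h}{(1-p)n}$ are each $o(1)$. You have simply filled in more of the routine details (e.g.\ checking $h(1-p)n\ge 3$ and handling the mismatch between $p^2n^2$ and $(p(1-p)n)^2$) that the paper leaves implicit.
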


\begin{proof}
For $h$ in this range we have 
$$\frac{h^3}{p^2n^2}+\frac{h}{(1-p)n}=o(1),$$
and so the inequality follows from Proposition~\ref{MajorBoot:binomupper}, which can be applied as $h(1-p)n=\omega(n)$.
\end{proof}

\begin{prop}
\label{MajorBoot:binomcumupper}
Suppose $p(1-p)n=\omega(n)$ and $0<h=o((p(1-p)n)^{\frac{2}{3}})$, then
$$\mathbb{P}(B(n,p)\geq pn+h)<\frac{\sqrt{p(1-p)n}}{\sqrt{2\pi}h}\exp\left(-\frac{h^2}{2p(1-p)n}+o(1)\right).$$
\end{prop}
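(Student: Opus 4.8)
The plan is to derive the tail bound by summing the pointwise Gaussian bound of Corollary~\ref{MajorBoot:binomuppercor} over the upper tail, exploiting the fact that past the mean the binomial weights decay at least geometrically, so that the whole tail sum exceeds its leading term by a factor of order only $p(1-p)n/h$. Throughout write $\sigma^2=p(1-p)n$; the hypothesis $p(1-p)n=\omega(n)$ forces $\sigma^2\to\infty$, and since $\sigma^2\le pn$ and $\sigma^2\le(1-p)n$ we also have $pn\to\infty$ and $(1-p)n\to\infty$.

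First I would dispatch the case $h\le 1$ trivially: there $\mathbb{P}(B(n,p)\ge pn+h)\le 1$, whereas the right-hand side of the claimed inequality is at least $\frac{\sigma}{\sqrt{2\pi}}e^{-1/(2\sigma^2)}\to\infty$, so the inequality holds for all large $n$. Hence assume $h>1$. Put $k_0=\lceil pn+h\rceil$ and write $k_0=pn+h'$, so that $h\le h'<h+1$; in particular $h'>1$ and $h'=o(\sigma^{2/3})$, so Corollary~\ref{MajorBoot:binomuppercor} applies with $h'$ in place of $h$ and gives
$$\mathbb{P}(B(n,p)=k_0)<\frac{1}{\sqrt{2\pi}\,\sigma}\exp\!\left(-\frac{h'^2}{2\sigma^2}+o(1)\right)\le\frac{1}{\sqrt{2\pi}\,\sigma}\exp\!\left(-\frac{h^2}{2\sigma^2}+o(1)\right),$$
the last step using $h'\ge h$.

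The main step is a geometric comparison. The ratio $k\mapsto \mathbb{P}(B(n,p)=k+1)/\mathbb{P}(B(n,p)=k)=\frac{(n-k)p}{(k+1)(1-p)}$ is strictly decreasing in $k$, so for every $k\ge k_0$ it is at most $\rho:=\frac{(n-k_0)p}{(k_0+1)(1-p)}$, and $\rho<1$ since $k_0>pn$. Hence $\mathbb{P}(B(n,p)=k_0+j)\le\rho^{\,j}\mathbb{P}(B(n,p)=k_0)$ for all $j\ge 0$, and summing the geometric series,
$$\mathbb{P}(B(n,p)\ge pn+h)=\sum_{j\ge 0}\mathbb{P}(B(n,p)=k_0+j)\le\frac{\mathbb{P}(B(n,p)=k_0)}{1-\rho}.$$
A one-line computation gives $1-\rho=\dfrac{h'+(1-p)}{(pn+h'+1)(1-p)}$; since $pn(1-p)=\sigma^2$ and $(h'+1)(1-p)=o(\sigma^2)$, the denominator equals $\sigma^2(1+o(1))$, while the numerator satisfies $h'+(1-p)\ge h'\ge h$, so $\frac{1}{1-\rho}\le\frac{\sigma^2(1+o(1))}{h}$. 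Multiplying the two displayed bounds and absorbing $1+o(1)$ into the exponential yields exactly $\mathbb{P}(B(n,p)\ge pn+h)<\frac{\sigma}{\sqrt{2\pi}\,h}\exp\!\left(-\frac{h^2}{2\sigma^2}+o(1)\right)$, as required.

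The only delicate point is the bookkeeping of error terms. One must check that the rounding correction $h'-h\in[0,1)$ is harmless — it perturbs the exponent by only $\frac{h'^2-h^2}{2\sigma^2}\le\frac{2h+1}{2\sigma^2}=o(1)$ thanks to $h=o(\sigma^{2/3})$ — and that the $o(1)$ supplied by Corollary~\ref{MajorBoot:binomuppercor} is uniform over the allowed $h$ and $p$. The genuinely sharp point is that in the regime $h\to\infty$, where the bound is meant to be tight, one has $1-\rho=(1+o(1))\,h/\sigma^2$, which is precisely what makes the leading constant come out as $\frac{1}{\sqrt{2\pi}}$ rather than merely some unspecified constant.
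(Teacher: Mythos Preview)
Your proof is correct and follows essentially the same approach as the paper's: both bound the ratio of successive binomial point probabilities above by some $\rho<1$, sum the resulting geometric series to extract a factor $(1-\rho)^{-1}\sim\sigma^2/h$, and then multiply by the pointwise Gaussian bound (Proposition~\ref{MajorBoot:binomupper}/Corollary~\ref{MajorBoot:binomuppercor}) at $\lceil pn+h\rceil$. The only differences are cosmetic --- you are more careful about the rounding $h\mapsto h'$ and treat $h\le 1$ separately, while the paper absorbs these into its $o(1)$ terms; note also that where you write $h'=o(\sigma^{2/3})$ you mean $h'=o((\sigma^2)^{2/3})$.
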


\begin{proof}
This proof follows that of Theorem 1.3 in \cite{bollobas2001random}. For $m\geq pn+h,$ we have 
$$\frac{\mathbb{P}(B(n,p)=m+1)}{\mathbb{P}(B(n,p)=m)}\leq 1-\frac{h+(1-p)}{(1-p)(pn+h+1)}=\lambda.$$
Hence,
$$\mathbb{P}(B(n,p)\geq pn+h)\leq \frac{1}{1-\lambda}\mathbb{P}(B(n,p)=\lceil pn+h\rceil).$$

As $(1-\lambda)^{-1}<\frac{p(1-p)n}{h}(1+\frac{h}{pn})<\frac{p(1-p)n}{h}e^{\frac{h}{pm}},$ we get from Proposition~\ref{MajorBoot:binomupper} that
$$\mathbb{P}(B(n,p)\geq pn+h)<\frac{\sqrt{p(1{-}p)n}}{h\sqrt{2\pi}}\exp\left({-}\frac{h^2}{2p(1{-}p)n}{+}\frac{h}{p(1{-}p)n}{+}\frac{h^3}{p^2n^2}\right)$$
the last two terms in the exponent being $o(1),$ for $h=o(p(1-p)n)^{\frac{2}{3}}$.
\end{proof}

\begin{prop}
\label{MajorBoot:binomcumlower}
Suppose $p(1-p)n=\omega(n)$ and   $$(p(1-p)n)^{\frac{1}{2}}<h=o((p(1-p)n)^{\frac{2}{3}}),$$ then
$$\mathbb{P}(B(n,p)\geq pn+h)>\frac{\sqrt{p(1-p)n}}{h\sqrt{2\pi}}\exp\left(-\frac{h^2}{2p(1-p)n}-\frac{3}{2}-o(1)\right).$$
\end{prop}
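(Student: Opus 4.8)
The plan is to bound the tail $\mathbb{P}(B(n,p)\ge pn+h)$ from below by retaining only a short block of terms just above $pn+h$ and estimating each of those terms with Corollary~\ref{MajorBoot:binomlowercor}; this is the natural lower-bound counterpart to the geometric-ratio argument used for Proposition~\ref{MajorBoot:binomcumupper}. Concretely, I would set $J=\left\lfloor p(1-p)n/h\right\rfloor$ and start from
$$\mathbb{P}(B(n,p)\ge pn+h)\ge\sum_{j=0}^{J}\mathbb{P}\bigl(B(n,p)=pn+h+j\bigr).$$
The block length $J$ is chosen to be just long enough to recover the polynomial prefactor $\sqrt{p(1-p)n}/h$, while still being short enough that the Gaussian exponent deteriorates by only a bounded additive amount.

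First I would check admissibility of the block. Since $h>(p(1-p)n)^{1/2}$ we get $J\le p(1-p)n/h<(p(1-p)n)^{1/2}<h$, so every index $h+j$ with $0\le j\le J$ satisfies $0<h+j<2h=o\bigl((p(1-p)n)^{2/3}\bigr)$; hence Corollary~\ref{MajorBoot:binomlowercor} applies to each term, and since the error terms it produces are increasing in the size of the deviation, they are $o(1)$ uniformly over $0\le j\le J$. This gives
$$\mathbb{P}\bigl(B(n,p)=pn+h+j\bigr)>\frac{1}{\sqrt{2\pi p(1-p)n}}\exp\left(-\frac{(h+j)^2}{2p(1-p)n}-o(1)\right)$$
uniformly in $j$. (Note also that $J\to\infty$, because $h=o\bigl((p(1-p)n)^{2/3}\bigr)$ forces $p(1-p)n/h\to\infty$.)

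Next I would control the exponent. Writing $(h+j)^2=h^2+2hj+j^2$, the cross term contributes $\frac{hj}{p(1-p)n}\le\frac{hJ}{p(1-p)n}\le1$, while $j^2\le J^2\le(p(1-p)n)^2/h^2\le p(1-p)n$ (using $h^2>p(1-p)n$) gives $\frac{j^2}{2p(1-p)n}\le\frac12$. Hence $\frac{(h+j)^2}{2p(1-p)n}\le\frac{h^2}{2p(1-p)n}+\frac32$ for all $j$ in the block, so
$$\mathbb{P}(B(n,p)\ge pn+h)\ge(J+1)\cdot\frac{1}{\sqrt{2\pi p(1-p)n}}\exp\left(-\frac{h^2}{2p(1-p)n}-\frac32-o(1)\right).$$
Finally $J+1>p(1-p)n/h$, so $(J+1)\big/\sqrt{2\pi p(1-p)n}>\sqrt{p(1-p)n}\big/(h\sqrt{2\pi})$, which is precisely the claimed inequality.

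The one genuinely delicate step is the choice of block length: one needs about $p(1-p)n/h$ consecutive terms to regain the prefactor $\sqrt{p(1-p)n}/h$, but including that many terms must not push the deviation outside the range where Corollary~\ref{MajorBoot:binomlowercor} is valid, nor inflate the quadratic $\frac{(h+j)^2}{2p(1-p)n}$ by more than the budgeted $\frac32$. The hypothesis $h>(p(1-p)n)^{1/2}$ is exactly what reconciles these competing demands, since it makes the natural window width $p(1-p)n/h$ automatically smaller than $h$ itself, keeping the whole block of indices within $[h,2h]$; the constant $\frac32$ (rather than something smaller) simply reflects the convenient choice $J=\lfloor p(1-p)n/h\rfloor$ instead of an optimized multiple.
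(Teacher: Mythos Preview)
Your proof is correct and follows essentially the same approach as the paper: both retain a block of roughly $p(1-p)n/h$ consecutive terms starting at $pn+h$, bound each term using Corollary~\ref{MajorBoot:binomlowercor}, and observe that over this window the quadratic exponent deteriorates by at most $3/2$. The only cosmetic difference is that the paper invokes unimodality to replace the whole block by the single smallest term at the right endpoint (so it applies Corollary~\ref{MajorBoot:binomlowercor} once, at $h+p(1-p)n/h$), whereas you apply the corollary termwise and argue uniformity of the $o(1)$ error; the resulting bound $(h+p(1-p)n/h)^2\le h^2+3p(1-p)n$ is identical to your $2hj+j^2\le 3p(1-p)n$.
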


\begin{proof}
Due to the unimodality of the binomial distribution, we have that the probability density function of the binomial distribution is decreasing away from its mean, and so,
$$\mathbb{P}(B(n,p)\geq pn+h)>\frac{p(1-p)n}{h}\mathbb{P}(B(n,p)=pn+h+\frac{p(1-p)n}{h}).$$

\ind We can apply Corollary~\ref{MajorBoot:binomlowercor} as $h+\frac{p(1-p)n}{h}=o((p(1-p)n)^\frac{2}{3})$, and so it follows that 
$$\mathbb{P}(B(n,p)\geq pn+h)>\frac{\sqrt{p(1-p)n}}{h\sqrt{2\pi}}\exp\left(-\frac{(h+\frac{p(1-p)n}{h})^2}{2p(1-p)n}-o(1)\right).$$

\ind This is greater than the stated bound because   $$(h+\frac{p(1-p)n}{h})^2\leq h^2+3p(1-p)n.$$
\end{proof}

\ind We shall also want a weaker but more general bound than Proposition~\ref{MajorBoot:binomcumupper} due to Bernstein in \cite{bernstein1924modification}. 

\begin{lem}
\label{MajorBoot:Bernstein}
Let $X_1,\ldots,X_n$ be independent zero-mean random variables. Suppose that $|X_i|\leq M$, then for all positive $t,$ 
$$\mathbb{P}\left(\sum_{i=1}^nX_i> t\right)\leq\exp\left(-\frac{t^2}{2\sum\mathbb{E}(X_j^2)+\frac{2}{3}Mt}\right).$$

\end{lem}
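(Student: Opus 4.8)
The plan is to prove this by the standard exponential moment (Chernoff) method, so the argument is essentially a self-contained derivation of Bernstein's inequality (one could alternatively just invoke \cite{bernstein1924modification}). We may assume $\sigma^2:=\sum_{j}\mathbb{E}(X_j^2)>0$, since otherwise every $X_i$ is a.s.\ zero and the statement is trivial.

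First I would fix a parameter $\lambda\in(0,3/M)$ and apply Markov's inequality to the non-negative random variable $\exp(\lambda\sum_i X_i)$, using independence, to obtain
\[
\mathbb{P}\Bigl(\sum_{i=1}^n X_i>t\Bigr)\leq e^{-\lambda t}\prod_{i=1}^n\mathbb{E}\bigl(e^{\lambda X_i}\bigr).
\]
The key step is to bound each factor $\mathbb{E}(e^{\lambda X_i})$. Expanding the exponential and using $\mathbb{E}(X_i)=0$ gives $\mathbb{E}(e^{\lambda X_i})=1+\sum_{k\geq 2}\lambda^k\mathbb{E}(X_i^k)/k!$. Since $|X_i|\leq M$ we have $|\mathbb{E}(X_i^k)|\leq M^{k-2}\mathbb{E}(X_i^2)$ for $k\geq 2$, and, combining this with the elementary bound $k!\geq 2\cdot 3^{k-2}$ (a trivial induction), one gets
\[
\mathbb{E}\bigl(e^{\lambda X_i}\bigr)\leq 1+\mathbb{E}(X_i^2)\sum_{k\geq 2}\frac{\lambda^k M^{k-2}}{2\cdot 3^{k-2}}=1+\frac{\lambda^2\mathbb{E}(X_i^2)}{2(1-\lambda M/3)}\leq\exp\!\left(\frac{\lambda^2\mathbb{E}(X_i^2)}{2(1-\lambda M/3)}\right),
\]
where the geometric series converges precisely because $\lambda M<3$, and the last step uses $1+w\leq e^w$.

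Multiplying over $i$ and collecting terms yields
\[
\mathbb{P}\Bigl(\sum_{i=1}^n X_i>t\Bigr)\leq\exp\!\left(-\lambda t+\frac{\lambda^2\sigma^2}{2(1-\lambda M/3)}\right),
\]
and it remains to optimise over $\lambda$. Taking $\lambda=t/(\sigma^2+Mt/3)$, which lies in $(0,3/M)$ since $\sigma^2>0$, we get $1-\lambda M/3=\sigma^2/(\sigma^2+Mt/3)$, and substituting collapses the exponent to $-t^2/\bigl(2(\sigma^2+Mt/3)\bigr)=-t^2/(2\sigma^2+\tfrac{2}{3}Mt)$, which is exactly the claimed bound. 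The only points needing a little care are verifying that this choice of $\lambda$ is admissible and that the final arithmetic simplification is correct; both are routine, and I do not expect any genuine obstacle.
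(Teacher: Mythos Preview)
Your proof is correct: it is the standard Chernoff--Bernstein argument, and all the steps (the moment bound $|\mathbb{E}(X_i^k)|\leq M^{k-2}\mathbb{E}(X_i^2)$, the factorial estimate $k!\geq 2\cdot 3^{k-2}$, the geometric-series summation, and the final substitution of $\lambda$) check out. The paper itself does not give a proof at all---it simply cites Craig \cite{craig1933tchebychef} and moves on---so your self-contained derivation is strictly more informative than what the paper provides, while following exactly the classical route one would find in the cited literature.
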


\begin{proof}
For a proof see \cite{craig1933tchebychef}. 
\end{proof}

We have in this section, so far discussed well-known deviation inequalities for standard binomial distributions. We will now proceed to present some analogous results for sums of binomial distributions with different parameters $ p $, that we have not  been able to find in the litterature.
\begin{prop}
\label{MajorBoot:twobinomlower}
Suppose that $p(1-p)N=\omega(N),$ the inequality   $$2(2p(1-p)N)^{\frac{1}{2}}<h=o((p(1-p)N)^{\frac{2}{3}})$$ holds and   $$hS=o(N((p(1-p)N)^{\frac{1}{2}})).$$ For the independent random variables; $X_1=B(N-S,p),$ with mean $\mu_1$ and variance $\sigma_1^2,$; and $X_2=B(N+S,(1-p))$ with mean $\mu_2$ and variance $\sigma_2^2$, we have
  $$\mathbb{P}(X_1+X_2\geq \mu_1+\mu_2+h)>\frac{\sqrt{2p(1{-}p)N}}{2\pi h}\exp\left(-\frac{h^2}{4p(1{-}p)N}-4-o(1)\right).$$

\end{prop}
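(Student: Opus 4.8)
The plan is to reduce the sum $X_1+X_2$ of two binomials with different parameters to a single well-understood binomial by conditioning on a suitable event, and then invoke the one-variable bounds (Corollary~\ref{MajorBoot:binomlowercor} and Proposition~\ref{MajorBoot:binomcumlower}) already established. The key observation is that $X_1=B(N-S,p)$ and $X_2=B(N+S,1-p)$ have the same "effective" variance parameter $p(1-p)$ per trial, so the total $X_1+X_2$ behaves, after centering, like a $B(2N, \cdot)$-type sum whose variance is essentially $\sigma_1^2+\sigma_2^2 = p(1-p)(N-S)+p(1-p)(N+S)=2p(1-p)N$. The offset $h$ lies in the window $(p(1-p)N)^{1/2}\ll h \ll (p(1-p)N)^{2/3}$, which is exactly the regime covered by Proposition~\ref{MajorBoot:binomcumlower} after rescaling, so the shape of the bound (a Gaussian tail with an extra $2\pi$ and additive constants) is what one expects.

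First I would condition on the value of $X_2$, writing $\mathbb{P}(X_1+X_2\geq \mu_1+\mu_2+h) = \sum_k \mathbb{P}(X_2 = \mu_2 + k)\,\mathbb{P}(X_1 \geq \mu_1 + h - k)$, and restrict the sum to a range of $k$ of length $\Theta(\sqrt{p(1-p)N})$ near $0$ — say $0 \le k \le c\sqrt{p(1-p)N}$ for a small constant $c$ — on which $\mathbb{P}(X_2 = \mu_2+k)$ is, by Corollary~\ref{MajorBoot:binomlowercor} (applied to $X_2 = B(N+S,1-p)$, noting $k$ is well within the $o((p(1-p)N)^{2/3})$ window), at least $\frac{1}{\sqrt{2\pi p(1-p)(N+S)}}\exp(-\frac{k^2}{2p(1-p)(N+S)} - o(1))$. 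The hypothesis $hS = o(N\sqrt{p(1-p)N})$ is what guarantees that replacing $N+S$ by $N$ (and similarly $N-S$ by $N$) in all the exponents costs only $o(1)$: the error in the exponent of the $X_1$-tail is of order $h^2 S / (p(1-p)N^2)$, which is $o(1)$ precisely under this assumption, and the $X_2$ contributions are even smaller. On that same range of $k$, $\mathbb{P}(X_1 \ge \mu_1 + h - k) \ge \mathbb{P}(X_1 \ge \mu_1 + h)$ is bounded below by Proposition~\ref{MajorBoot:binomcumlower} — here one needs $h - $ (the shift) to still exceed $\sqrt{p(1-p)(N-S)}$, which holds since $h > 2(2p(1-p)N)^{1/2}$ with room to spare — giving $\ge \frac{\sqrt{p(1-p)N}}{h\sqrt{2\pi}}\exp(-\frac{h^2}{2p(1-p)N} - \frac32 - o(1))$.

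Then I would sum: $\sum_{0 \le k \le c\sqrt{p(1-p)N}} \mathbb{P}(X_2=\mu_2+k) = \Omega(1)$ (a constant fraction of the mass of $X_2$, since the summands over an interval of width a constant times the standard deviation near the mean add up to a constant), so combining with the uniform lower bound on $\mathbb{P}(X_1 \ge \mu_1+h-k)$ over that range yields $\mathbb{P}(X_1+X_2 \ge \mu_1+\mu_2+h) \ge \Omega(1) \cdot \frac{\sqrt{p(1-p)N}}{h\sqrt{2\pi}}\exp(-\frac{h^2}{2p(1-p)N} - O(1))$. This is already of the right form; to get the stated constant I would instead sum the product $\mathbb{P}(X_2=\mu_2+k)\mathbb{P}(X_1 \ge \mu_1+h-k)$ more carefully, keeping the $k$-dependence: using $\mathbb{P}(X_1 \ge \mu_1 + h - k) \ge \frac{\sqrt{p(1-p)N}}{(h-k)\sqrt{2\pi}}\exp(-\frac{(h-k)^2}{2p(1-p)N} - \frac32 - o(1))$ and the Gaussian form of $\mathbb{P}(X_2=\mu_2+k)$, the sum over $k \in \mathbb{Z}$ of $\exp(-\frac{k^2}{2p(1-p)N} - \frac{(h-k)^2}{2p(1-p)N})$ is, by completing the square ($k^2 + (h-k)^2 = 2(k - h/2)^2 + h^2/2$), equal to $\exp(-\frac{h^2}{4p(1-p)N})$ times $\sum_k \exp(-\frac{(k-h/2)^2}{p(1-p)N})$, and the latter sum is $\sim \sqrt{\pi p(1-p)N}$. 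Carrying the constants through $\frac{1}{\sqrt{2\pi p(1-p)N}}$ from $X_2$, the $\frac{\sqrt{p(1-p)N}}{h\sqrt{2\pi}}$ from $X_1$, and this $\sqrt{\pi p(1-p)N}$ yields a leading factor of order $\frac{\sqrt{p(1-p)N}}{h}$ times $\frac{1}{2\pi}$ (up to the $e^{-3/2}$ and the truncation losses, which are absorbed into the $-4 - o(1)$ in the exponent, since $3/2 < 4$ leaves slack for the constant lost in restricting $k$ to a finite range and for replacing $h-k$ by $h$ in the prefactor).

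\textbf{Main obstacle.} The delicate point is the bookkeeping of constants and error terms when passing from $B(N\pm S, \cdot)$ to $B(N, \cdot)$ in the exponents: one must check that each substitution — $N\pm S \to N$ inside $\frac{h^2}{2p(1-p)(\cdot)}$, the shift of means induced by $S \ne 0$, and the truncation of the $k$-sum to an interval — contributes only $o(1)$ to the exponent (using $hS = o(N\sqrt{p(1-p)N})$ and $h = o((p(1-p)N)^{2/3})$) or at worst a bounded constant that fits inside the $-4$. The $2\pi$ in the denominator (versus $\sqrt{2\pi}$ in the single-binomial bounds) is the signature that two Gaussian convolution integrals are being combined, and getting exactly that constant rather than something larger is what forces the careful square-completion in the last step rather than a cruder "constant fraction of the mass" argument.
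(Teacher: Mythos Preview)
Your conditioning-and-square-completion approach is different from the paper's and can be made to work, but there is one genuine gap in your error analysis that you should fix.

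\textbf{The gap.} You claim that replacing $N\pm S$ by $N$ in the exponents costs $o(1)$ because ``the error in the exponent of the $X_1$-tail is of order $h^2 S/(p(1-p)N^2)$, which is $o(1)$ precisely under this assumption.'' This is false: from $hS=o(N\sqrt{p(1-p)N})$ you only get $\frac{h^2S}{p(1-p)N^2}=o\bigl(\frac{h}{\sqrt{p(1-p)N}}\bigr)$, and $h/\sqrt{p(1-p)N}$ can be as large as $(p(1-p)N)^{1/6}$. So neither the $X_1$ nor the $X_2$ correction is individually $o(1)$ when $k$ is near $h/2$. What actually saves you is a cancellation you have not identified: writing the combined exponent over the common denominator $2p(1-p)(N^2-S^2)$, the $S$-part of the numerator is $\bigl[(h-k)^2-k^2\bigr]S=h(h-2k)S$, not $h^2S$. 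Since the dominant $k$ after completing the square satisfy $|h-2k|=O(\sqrt{p(1-p)N})$, the true error is $O\bigl(\frac{h\sqrt{p(1-p)N}\,S}{p(1-p)N^2}\bigr)=O\bigl(\frac{hS}{N\sqrt{p(1-p)N}}\bigr)=o(1)$, exactly by the hypothesis. (This cancellation is just the rigorous manifestation of your own heuristic that $\sigma_1^2+\sigma_2^2=2p(1-p)N$ is $S$-free.) Once you insert this correction, your square-completion argument goes through and the constants land inside the $-4$ budget as you say.

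\textbf{Comparison with the paper.} The paper does \emph{not} condition on one variable. Instead it covers the region $\{X_1+X_2\ge\mu_1+\mu_2+h\}$ by $2l$ disjoint axis-parallel right triangles (with $l\approx h/\sqrt{2p(1-p)N}$ and leg length $z=2p(1-p)N/h$), counts the $\approx z^2/2$ lattice points in each, and lower-bounds each point probability by Corollary~\ref{MajorBoot:binomlowercor} applied separately to $X_1$ and $X_2$; it never uses the cumulative bound Proposition~\ref{MajorBoot:binomcumlower}. That symmetric two-dimensional decomposition keeps the deviations of $X_1$ and $X_2$ within $O(\sqrt{p(1-p)N})$ of each other throughout, so the $S$-cancellation is automatic in the arithmetic. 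Your one-dimensional conditioning is arguably more natural and reuses Proposition~\ref{MajorBoot:binomcumlower} rather than redoing a Riemann-sum count, but it forces you to spot the cancellation by hand.
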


\begin{proof}
The conditions on $S$ and $h$ imply that $S=o(N)$. Set $z$ and $l$ equal to $\frac{2p(1-p)N}{h}$ and $\left\lfloor\frac{h}{\sqrt{2p(1-p)N}}\right\rfloor$ respectively. We can bound   $$\mathbb{P}(X_1+X_2\geq \mu_1+\mu_2+h)$$ from below by summing over the disjoint regions 
\begin{equation}
\label{MajorBoot:threerelations}
\sum_{i=-l}^{l-1}\mathbb{P}\left(X_1<\mu_1{+}\frac{h}{2}{-}iz, X_2<\mu_2{+}\frac{h}{2}{+}(i{+}1)z, X_1{+}X_2\geq \mu_1{+}\mu_2{+}h\right).
\end{equation}
These regions are disjoint as if $ X_1<\mu_1{+}\frac{h}{2}{-}(i+1)z $ and $ X_2<\mu_2{+}\frac{h}{2}{+}(i{+}1)z $, then $ X_1+X_2<  \mu_1{+}\mu_2{+}h$.
For each $i$ the region specified is an
isosceles right angled triangle with axis-parallel legs of length $ z $, and so there are at least $\lfloor z\rfloor(\lfloor z\rfloor-1)/2$ pairs of integer values $x_1,x_2$, which $X_1,X_2$ can take while still satisfying all three relations in~(\ref{MajorBoot:threerelations}). We have that $h>2lz$, and so if $X_1,X_2$ satisfy all three relations in~(\ref{MajorBoot:threerelations}), then $X_1\geq\mu_1$ and $X_2\geq\mu_2$. As we are only considering the region in which $X_1,X_2$ are larger than their means we can bound the sum in (\ref{MajorBoot:threerelations}) from below by
\begin{equation}
\label{MajorBoot:twobinomlowersum}
\sum_{i=-l}^{l-1}\frac{\lfloor z\rfloor(\lfloor z\rfloor{-}1)}{2}\mathbb{P}\left(X_1=\left\lceil\mu_1+\frac{h}{2}-iz\right\rceil\right)\mathbb{P}\left(X_2=\left\lceil\mu_2+\frac{h}{2}+(i+1)z\right\rceil\right).
\end{equation}

\ind We have that $p(1-p)(N-S)=\omega(N)$ and $h+lz=o(p(1-p)(N-S))^{\frac{2}{3}},$ and so we can apply Corollary~\ref{MajorBoot:binomlowercor} to get that the quantity in (\ref{MajorBoot:twobinomlowersum}) is at least
\begin{align*}&\sum_{i=-1}^{l-1}\frac{\lfloor z\rfloor(\lfloor z\rfloor{-}1)}{4\pi \sigma_1\sigma_2}\cdot \\&\exp\left(-\frac{(\frac{h}{2}{-}iz{+}1)^2(N{+}S){+}(\frac{h}{2}{+}(i{+}1)z{+}1)^2(N{-}S)}{2p(1-p)(N^2-S^2)}{-}o(1)\right).
\end{align*}

\ind Expanding this out, and noticing $\left\lfloor z\right\rfloor=z(1+o(1))$ and   $$(N-S)(N+S)=N^2(1+o(1))$$ we get that the sum in (\ref{MajorBoot:twobinomlowersum}) is at least 
\begin{align}\label{ny}&\sum_{i=-l}^{l-1}\frac{z^2}{4\pi p(1{-}p)N}\cdot \nonumber\\&
\exp\left(-\frac{h^2N{+}2hzN{+}4i^2z^2N{+}(4i{+}2)z^2N{+}o(p(1{-}p)N^2)}{4p(1{-}p)(N^2{-}S^2)}{-}o(1)\right),
\end{align}
where the approximations for $\left\lfloor z\right\rfloor$ and $\sigma_1,\sigma_2$ have been taken care of in the $o(1)$ in the exponential term. We have that $ 4i^{2}+4i+2\leq 6 l^{2} $ and $ l^{2}z^{2}\leq 2p(1-p)N $, and so using the bounds in the statement of the proposition, the sum in (\ref{ny}) is at least
\begin{align*}&\sum_{i=-l}^{l-1}\frac{z^2}{4\pi p(1{-}p)N}
\exp\left(-\frac{h^2N{+}16p(1{-}p)N^2}{4p(1{-}p)(N^2{-}S^2)}{-}o(1)\right)\\
&\lefteqn{\frac{lz^2}{2\pi p(1-p)N}\exp\left(-\frac{h^2N}{4p(1-p)(N^2-S^2)}-4-o(1)\right)}\\
&>\frac{\sqrt{2p(1-p)N}}{2\pi h}\exp\left(-\frac{h^2}{4p(1-p)N}-4-o(1)\right).
\end{align*}

\ind The last inequality following because $l>h/(2\sqrt{2p(1-p)N})$ and $hS=o(N(p(1-p)N)^{\frac{1}{2}})$. 
\end{proof}

\begin{prop}
\label{MajorBoot:twobinomupper}
Suppose that $p(1-p)N=\omega(N)$. Furthermore assume that 
$$2(2p(1-p)N)^{\frac{1}{2}}<h=o((p(1-p)N)^{\frac{2}{3}})\quad \text{and}\quad Sh=o(N(p(1-p)N)^{\frac{1}{2}}).$$ 
\ind Then we have
$$\mathbb{P}(X_1+X_2\geq \mu_1+\mu_2+h)<\frac{\sqrt{2p(1-p)N}}{h}\exp\left(-\frac{h^2}{4p(1-p)N}+3+o(1)\right),$$
for independent random variables $X_1=B(N-S,p)$ with mean $\mu_1$ and variance $\sigma_1^2$, and $X_2=B(N+S,(1-p))$ with mean $\mu_2$ and variance $\sigma_2^2$.
\end{prop}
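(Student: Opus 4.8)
The plan is to mirror the lower-bound argument of Proposition~\ref{MajorBoot:twobinomlower} but in the reverse direction, reducing the two-binomial tail event to a single one-dimensional tail estimate for which Proposition~\ref{MajorBoot:binomcumupper} (the one-variable cumulative upper bound) applies. First I would decouple the two ranges of summation. Conditioning on the value of $X_1+X_2$ is awkward, so instead I would write the event $\{X_1+X_2\geq \mu_1+\mu_2+h\}$ as a union over a slab decomposition: for an integer parameter $j$, consider the events $\{X_1\in[\mu_1+jz,\mu_1+(j+1)z)\}$ where $z=\frac{2p(1-p)N}{h}$ is the same scale used in the lower bound. On each such slab, $X_2$ must exceed $\mu_2+h-(j+1)z$, so
\begin{equation*}
\mathbb{P}(X_1+X_2\geq \mu_1+\mu_2+h)\leq\sum_{j}\mathbb{P}\!\left(X_1\geq \mu_1+jz\right)\mathbb{P}\!\left(X_2\geq \mu_2+h-(j+1)z\right),
\end{equation*}
using independence of $X_1$ and $X_2$. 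The point of choosing the slab width $z$ is that it is both $o(N)$ and $\omega(\sqrt{p(1-p)N})$, so each one-dimensional tail is controlled by Proposition~\ref{MajorBoot:binomcumupper} (after checking the hypotheses $p(1-p)(N\pm S)=\omega(N)$, which follows since $S=o(N)$, and that the relevant deviations are $o((p(1-p)N)^{2/3})$, which follows from $h=o((p(1-p)N)^{2/3})$ and $l\le h/\sqrt{2p(1-p)N}$ so $lz=O(\sqrt{p(1-p)N})=o((p(1-p)N)^{2/3})$).

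Next I would plug in the bound from Proposition~\ref{MajorBoot:binomcumupper} for each factor. Writing $a=jz$ and $b=h-(j+1)z$, the product of the two Gaussian-type factors contributes an exponent of the shape
\begin{equation*}
-\frac{a^2}{2p(1-p)(N-S)}-\frac{b^2}{2p(1-p)(N+S)}+o(1),
\end{equation*}
and since $S=o(N)$ one replaces both denominators by $2p(1-p)N$ at the cost of an error $\exp(O(S h^2/(p(1-p)N^2)))=\exp(o(1))$ using $Sh=o(N(p(1-p)N)^{1/2})$ and $h=o((p(1-p)N)^{2/3})$. Then I would use the elementary inequality $a^2+b^2\geq \frac{1}{2}(a+b)^2\geq\frac{1}{2}(h-2z)^2=\frac{h^2}{2}(1-o(1))$, valid uniformly over the $O(h/z)$ relevant slabs, giving each summand an exponent at most $-\frac{h^2}{4p(1-p)N}+o(1)$ times the extra factor. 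The prefactors $\frac{\sqrt{p(1-p)(N\mp S)}}{a\sqrt{2\pi}}$ and $\frac{\sqrt{p(1-p)(N\pm S)}}{b\sqrt{2\pi}}$ need a little care when $a$ or $b$ is small (near the endpoints $j$ close to $0$ or to $h/z$), but there the corresponding Gaussian factor is $\le 1$ and one simply bounds the other tail trivially by $1$; away from the endpoints $a,b\gtrsim z$ and each prefactor is $O(\sqrt{p(1-p)N}/z)=O(h/\sqrt{p(1-p)N})$. The number of slabs is $O(h/z)=O(h^2/(p(1-p)N))$, and multiplying the slab count by the worst-case summand and collecting the explicit constants (each $o(1)$-exponent contributes at most a bounded multiplicative constant, the two $e^{o(1)}$ corrections, etc.) yields a bound of the form $\frac{C\sqrt{2p(1-p)N}}{h}\exp(-\frac{h^2}{4p(1-p)N}+o(1))$; absorbing the polynomial-in-$h/\sqrt{p(1-p)N}$ factors into the exponential as $O(\log)=o(h^2/p(1-p)N)$-free terms and choosing $C\le e^3$ gives exactly the claimed inequality.

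The main obstacle I expect is bookkeeping the prefactor near the two ends of the slab range, and more importantly making sure the number of slabs (which is only $O(h/z)$, not $O(1)$) does not spoil the constant: since $h/z=h^2/(2p(1-p)N)$ can be as large as $(p(1-p)N)^{1/3}$, multiplying by this many terms introduces a factor that must be shown to be $e^{o(1)}$ relative to $\exp(-h^2/4p(1-p)N)$ — this is fine because $\log(h/z)=o(h^2/p(1-p)N)$, but it is the step where one must be honest. An alternative, cleaner route that avoids slab-counting altogether is to bound $\mathbb{P}(X_1+X_2\ge\mu_1+\mu_2+h)$ directly by a Bernstein/Chernoff estimate (Lemma~\ref{MajorBoot:Bernstein}) for the centered sum $X_1+X_2-\mu_1-\mu_2$, whose variance is $p(1-p)(N-S)+p(1-p)(N+S)=2p(1-p)N$ and whose summands are bounded by $1$; this immediately gives $\exp(-h^2/(4p(1-p)N+\frac{2}{3}h))\le\exp(-h^2/4p(1-p)N\cdot(1-o(1)))$ since $h=o(p(1-p)N)$, and the missing polynomial prefactor $\sqrt{2p(1-p)N}/h$ (which is $\ge 1$ in the regime $h=O(\sqrt{p(1-p)N})$ but $<1$ otherwise) can be recovered by first peeling off a deviation of size $\sqrt{2p(1-p)N}$ exactly as in the proof of Proposition~\ref{MajorBoot:binomcumupper}, i.e.\ via the ratio-of-consecutive-terms trick applied to the log-concave (Corollary~\ref{MajorBoot:logconcor}) distribution of $X_1+X_2$. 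I would present the slab argument as the primary proof for parallelism with Proposition~\ref{MajorBoot:twobinomlower}, and remark that the Bernstein route gives the same conclusion more quickly.
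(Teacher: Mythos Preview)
Your slab decomposition is a reasonable starting point, and indeed the paper also discretises at scale $z=2p(1-p)N/h$, but your bookkeeping has a genuine gap at the step where you ``multiply the slab count by the worst-case summand''. Write $u=h^2/(2p(1-p)N)$; under the hypotheses $u$ ranges from $4$ up to $(p(1-p)N)^{1/3}$. At the central slab $a\approx b\approx h/2$ your product of cumulative bounds has prefactor of order $p(1-p)N/(ab)\asymp 1/u$ and exponent $-h^2/(4p(1-p)N)+O(1)$; the number of slabs is $\asymp u$. Multiplying gives a bound of order $\exp(-h^2/(4p(1-p)N))$ \emph{without} the factor $\sqrt{2p(1-p)N}/h\asymp 1/\sqrt{u}$. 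Your proposed fix, that $\log(h/z)=o(h^2/(p(1-p)N))$, does not recover this: it only shows the overcount is $e^{o(u)}$, yielding $\exp\bigl(-\tfrac{h^2}{4p(1-p)N}(1+o(1))\bigr)$ rather than $\exp\bigl(-\tfrac{h^2}{4p(1-p)N}+O(1)\bigr)$. The missing $1/\sqrt{u}$ is not absorbable into $e^{3}$, and it matters: in the application to $g(y)$ in Section~\ref{lower} this prefactor supplies the crucial $1/\sqrt{\log d}$ without which the first-moment count $\binom{n}{t}g(y)^t$ no longer tends to $0$.

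The fix is not to bound the sum by its worst term but to actually sum: the exponent $-(a^2+b^2)/(2p(1-p)N)$, written with $a=(h-z)/2+kz$, decays like $\exp(-k^2z^2/(p(1-p)N))$, so only $O(\sqrt{u})$ slabs contribute, and the sum recovers the $1/\sqrt{u}$ prefactor. The paper does essentially this, but with a two-dimensional grid: it slices both $X_1$ and $X_2$ into width-$z$ slabs centred at $\mu_i+h/2$, bounds each cell by $\lceil z\rceil^2$ times a point-mass estimate (Corollary~\ref{MajorBoot:binomuppercor}) rather than a cumulative tail, truncates at $|i|,|j|<l=\lfloor u/2\rfloor$ with the far ranges handled separately by $\mathbb{P}(X_i\ge\mu_i+h/2+lz)$, and then evaluates the finite double sum over $\{i+j\ge-1\}$ via the substitution $|i-j|\le\sqrt{2(i^2+j^2)}$ and a counting argument giving $\le 50\sqrt{l}$. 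That $\sqrt{l}\asymp\sqrt{u}$ is exactly the factor your crude count misses.

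Your Bernstein alternative gives the correct exponent with $o(1)$ error (since $h^3/(p(1-p)N)^2=o(1)$), but to recover the $\sqrt{2p(1-p)N}/h$ prefactor via the log-concavity ratio trick you need a point-mass bound $\mathbb{P}(X_1+X_2=k)\lesssim (p(1-p)N)^{-1/2}\exp(-h^2/(4p(1-p)N))$, and that estimate is itself a convolution computation of the same difficulty as the original proposition (it is essentially Proposition~\ref{MajorBoot:twobinomdiffupper}). So this route is circular unless you supply that lemma separately.
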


\begin{proof}
The conditions on $S$ and $h$ imply that $S=o(N)$. Set $z=\frac{2Np(1-p)}{h},$ and $l=\left\lfloor\frac{h^2}{4Np(1-p)}\right\rfloor$. We bound $\mathbb{P}(X_1+X_2\geq \mu_1+\mu_2+h)$ from below by covering the region where this inequality holds by 
\begin{align}
\label{MajorBoot:twobinomuppersum}
&\lefteqn{\mathbb{P}(X_1+X_2\geq \mu_1+\mu_2+h)<}\\
\label{MajorBoot:line1}
&\sum_{i+j\geq -1}^{-l\leq i,j\leq l-1}\left(\mathbb{P}\left(0\leq X_1{-}\mu_1{-}\frac{h}{2}{-}iz< z\right)\mathbb{P}\left(0\leq X_2{-}\mu_2{-}\frac{h}{2}{-}jz<z\right)\right)\\
\label{MajorBoot:line2}
&+\mathbb{P}\left(X_1\geq \mu_1+\frac{h}{2}+lz\right)\\
\label{MajorBoot:line3}
&+\mathbb{P}\left(X_2\geq \mu_2+\frac{h}{2}+lz\right).
\end{align}

\ind We shall bound these three summands separately. 
Again because $h>2lz$ we are only considering the range in which $X_1$ and $X_2$ are greater than their means.
Firstly for each $i,j$ pair there are at most $\lceil z\rceil^2$ points inside the specified region, and so the product inside the sum of (\ref{MajorBoot:line1}) is at most
$$\lceil z\rceil^2\mathbb{P}\left(X_1=\left\lceil\mu_1+\frac{h}{2}+iz\right\rceil\right)\mathbb{P}\left(X_2=\left\lceil\mu_2+\frac{h}{2}+jz\right\rceil\right).$$

\ind  We have that $p(1-p)(N\pm S)=\omega(N)$ and   $$1<h\pm lz=o(p(1-p)(N\pm S))^{\frac{2}{3}},$$ and so we can apply Corollary~\ref{MajorBoot:binomuppercor} to get that the sum in (\ref{MajorBoot:line1}) is at most

\begin{align*}
&\lefteqn{\sum_{i+j\geq -1}^{-l\leq i,j\leq l-1}\frac{\lceil z\rceil^2}{2\pi p(1-p)\sqrt{N^2-S^2}}}\\
&\cdot\exp\left(-\frac{\left(\frac{h}{2}+iz\right)^2(N+S)+\left(\frac{h}{2}+jz\right)^2(N-S)}{2p(1-p)(N^2-S^2)}+o(1)\right).
\end{align*}

\ind This is equal to
\begin{align}\label{MajorBoot:nastysum}
&\lefteqn{\frac{z^2}{2\pi p(1-p)N}\exp\left(-\frac{h^2N}{4p(1-p)(N^2-S^2)}+o(1)\right)\sum_{i+j\geq-1}^{-l\leq i,j<l}}\nonumber \\
&\exp\left(-\frac{h(i+j)zN+hzS(i-j)+z^2N(i^2+j^2)+z^2S(i^2-j^2)}{2p(1-p)(N^2-S^2)}\right).
\end{align}
\ind We can bound the above by noting that $|i-j|\leq\sqrt{2}\sqrt{i^2+j^2}$ and $|i^2-j^2|\leq i^2+j^2$. As we also have that $Np(1-p)/2<z^2l\leq Np(1-p)$, the inner sum appearing in (\ref{MajorBoot:nastysum}) is at most
\begin{align}
\label{MajorBoot:ijsum}
\sum_{i+j\geq-1}^{-l\leq i,j<l}\exp\left(-(i+j)+\sqrt{\frac{i^2+j^2}{4l}}-\frac{i^2+j^2}{4l}+o(1)\right).
\end{align}

\ind A point $(i,j)$ in the plane with integer coordinates and   $$\frac{i^2+j^2}{4l}-\sqrt{\frac{i^2+j^2}{4l}}<t,$$ also satisfies $|i-j|<\sqrt{21tl}$, as if $ |i-j|\geq\sqrt{21tl} $, then $ i^2+j^2\geq \frac{21tl}{2} $, and so $$\frac{i^2+j^2}{4l}-\sqrt{\frac{i^2+j^2}{4l}}\geq\left(\frac{21}{8}-\sqrt{\frac{21}{8}}\right)t.$$ Therefore the number of points $(i,j)$ in the plane with integer coordinates and satisfying both $\frac{i^2+j^2}{4l}-\sqrt{\frac{i^2+j^2}{4l}}<t,$ and $-1\leq i+j<t$ is at most $2(t+1)\sqrt{21lt}$. This allows us crudely bound (\ref{MajorBoot:ijsum}) by
\begin{align*}
2\sqrt{21l}\sum_{t=1}^\infty (t+1)\sqrt{t}\exp\left(-(t-1)\right).
\end{align*}

\ind The latter sum is less than $50\sqrt{l},$ and so the sum in (\ref{MajorBoot:line1}) is bounded above by
\begin{equation}
\label{MajorBoot:bigsumpart1}
\frac{50\sqrt{p(1-p)N}}{h\pi}\exp\left(-\frac{h^2}{4p(1-p)N}+o(1)\right).
\end{equation}

\ind Secondly we bound the probability (\ref{MajorBoot:line2}). As $l>\frac{h^2}{8Np(1-p)}$ we have that
$$\mathbb{P}\left(X_1\geq \mu_1+\frac{h}{2}+lz\right)<\mathbb{P}\left(X_1\geq \mu_1+\frac{3h}{4}\right).$$ 

\ind By Proposition~\ref{MajorBoot:binomcumupper} we get that the quantity in (\ref{MajorBoot:line2}) is at most
\begin{align}
\label{MajorBoot:bigsumpart2}
&\lefteqn{\frac{4\sqrt{p(1-p)(N-S)}}{3h\sqrt{2\pi}}\exp\left(-\frac{9h^2}{32p(1-p)(N-S)}+o(1)\right)}\nonumber\\
&<\frac{2}{3\sqrt{\pi}}\frac{\sqrt{2p(1-p)N}}{h}\exp\left(-\frac{h^2}{4p(1-p)N}+o(1)\right).
\end{align}

Similarly, the probability in (\ref{MajorBoot:line3}) is at most
\begin{equation}
\label{MajorBoot:bigsumpart3}
\frac{2}{3\sqrt{\pi}}\frac{\sqrt{2p(1-p)N}}{h}\exp\left(-\frac{h^2}{4p(1-p)N}+o(1)\right).
\end{equation}

As $\frac{50}{\sqrt{2}\pi}+\frac{4}{3\sqrt{\pi}}<e^3$ we get that the sum of our three bounds, (\ref{MajorBoot:bigsumpart1}), (\ref{MajorBoot:bigsumpart2}), and (\ref{MajorBoot:bigsumpart3}) is at most the stated bound.
\end{proof}

\begin{prop}
\label{MajorBoot:twobinomdiffupper}
Suppose that $p(1-p)N=\omega(N)$, that   $$\omega(N)(p(1-p)N)^{\frac{1}{2}}\leq pS=o((p(1-p)N)^{\frac{2}{3}})$$ and that $T=o(N)$, then, for $ N $ large enough,
\begin{align*}
\mathbb{P}(Z_1=Z_2+pT)&<\frac{S}{2\pi(1-p)N}\exp\left(-\frac{2pS^2}{(1-p)(2N-T)}+o(1)\right)\\
& {} +\frac{3}{\pi pS}\exp\left(-\frac{9pS^2}{8(1-p)N}\right),
\end{align*}
for independent random variables $Z_1=B(N-S,p)$ with mean $\mu_1$ and variance $\sigma_1^2$ and $Z_2=B(N+S-T,p)$ with mean $\mu_2$ and variance $\sigma_2^2$. 
\end{prop}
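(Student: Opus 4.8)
The plan is to reduce the statement to a local (point) estimate for a sum of two binomials with complementary parameters, and then bound that point probability by splitting the relevant convolution into a central block and two tails, in the spirit of the proofs of Propositions~\ref{MajorBoot:twobinomlower} and~\ref{MajorBoot:twobinomupper}.

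\ind First I would rewrite the event. Set $X_1=Z_1\sim B(N-S,p)$ and $X_2=(N+S-T)-Z_2\sim B(N+S-T,1-p)$; these are independent, with means $\mu_1=p(N-S)$ and $\mu_2=(1-p)(N+S-T)$ and variances $\sigma_1^2=p(1-p)(N-S)$ and $\sigma_2^2=p(1-p)(N+S-T)$, so $\sigma_1^2+\sigma_2^2=p(1-p)(2N-T)$. Since $\mu_1+\mu_2=N+S-T-2pS+pT$, the event $\{Z_1=Z_2+pT\}$ equals $\{X_1+X_2=\mu_1+\mu_2+h\}$ with $h=2pS$ (and the probability is $0$ unless $pT\in\mathbb{Z}$). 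The hypotheses force $S=o(N)$, so $N-S$ and $N+S-T$ are $N(1+o(1))$, the quantities $\sigma_1^2,\sigma_2^2,\sigma_1\sigma_2$ are all $p(1-p)N(1+o(1))$, and $h=2pS$ satisfies $h\to\infty$ together with $2(2p(1-p)N)^{1/2}<h=o((p(1-p)N)^{2/3})$.

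\ind Next, write $\mathbb{P}(X_1+X_2=\mu_1+\mu_2+h)=\sum_{v}\mathbb{P}(X_1=\mu_1+v)\,\mathbb{P}(X_2=\mu_2+h-v)$, the sum over all $v$ with $\mu_1+v\in\mathbb{Z}$, and split it into the central block $\tfrac h4\le v\le\tfrac{3h}4$ and the two tails $v<\tfrac h4$ and $v>\tfrac{3h}4$. In the central block both $v$ and $h-v$ lie in $(1,o((p(1-p)N)^{2/3}))$, so Corollary~\ref{MajorBoot:binomuppercor} applies to each factor and bounds the $v$-term by $\tfrac{1}{2\pi\sigma_1\sigma_2}\exp(-\tfrac{v^2}{2\sigma_1^2}-\tfrac{(h-v)^2}{2\sigma_2^2}+o(1))$. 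The quadratic $\tfrac{v^2}{2\sigma_1^2}+\tfrac{(h-v)^2}{2\sigma_2^2}$ has minimum over $\mathbb{R}$ equal to $\tfrac{h^2}{2(\sigma_1^2+\sigma_2^2)}=\tfrac{2pS^2}{(1-p)(2N-T)}$, attained at $v^{*}=h\sigma_1^2/(\sigma_1^2+\sigma_2^2)=\tfrac h2(1+o(1))\in[\tfrac h4,\tfrac{3h}4]$; bounding the block sum by the number of its terms, which is at most $\tfrac h2(1+o(1))$, times the largest term, one obtains the first claimed term, using $\tfrac{h}{2}\cdot\tfrac{1}{2\pi\sigma_1\sigma_2}=\tfrac{pS}{2\pi\sigma_1\sigma_2}=\tfrac{S}{2\pi(1-p)N}(1+o(1))$. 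For the tail $v>\tfrac{3h}4$ I would bound $\mathbb{P}(X_2=\mu_2+h-v)$ by $\sup_j\mathbb{P}(X_2=j)\le\tfrac{1+o(1)}{\sqrt{2\pi\sigma_2^2}}$ (a standard estimate) and sum $\mathbb{P}(X_1=\mu_1+v)$ over $v>\tfrac{3h}4$ to get $\mathbb{P}(X_1\ge\mu_1+\tfrac{3h}4)$, which Proposition~\ref{MajorBoot:binomcumupper} bounds by $\tfrac{\sqrt{\sigma_1^2}}{\sqrt{2\pi}\,(3h/4)}\exp(-\tfrac{9h^2}{32\sigma_1^2}+o(1))$; with $h=2pS$ and $\sigma_1^2=p(1-p)(N-S)\le p(1-p)N$ this tail is at most $\tfrac{3}{2\pi pS}\exp(-\tfrac{9pS^2}{8(1-p)N})$ for $n$ large. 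The symmetric tail $v<\tfrac h4$ gives the same bound with the roles of $X_1$ and $X_2$ exchanged, and adding the two tails gives the second claimed term; summing the three contributions gives the proposition.

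\ind The main obstacle is the bookkeeping: checking that every binomial to which Corollary~\ref{MajorBoot:binomuppercor} or Proposition~\ref{MajorBoot:binomcumupper} is applied meets the hypotheses (the deviations $v$ and $h-v$ must stay in the admissible window, and $p(1-p)(N\pm S)$ must be unbounded), and verifying that replacing $N-S$ and $N+S-T$ by $N$ in the prefactors and in the exponent of the central term only costs a factor $1+o(1)$ --- which follows from $S,T=o(N)$ and $pS=o((p(1-p)N)^{2/3})$. The factor $\tfrac98$ in the tail exponent leaves enough room that the $o(1)$ errors there, and the discrepancy between $N$ and $N\pm S-T$ in those exponents, are absorbed without further care.
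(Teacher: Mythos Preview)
Your proposal is correct and very close in spirit to the paper's proof, but the two diverge in how the tails of the convolution are handled, and it is worth recording the difference.

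Both arguments rewrite $\{Z_1=Z_2+pT\}$ as $\{X_1+X_2=\mu_1+\mu_2+h\}$ with $h=2pS$, parametrize the convolution by the deviation of one summand from its mean, and isolate a central block of width $pS$ (your $[h/4,3h/4]$ is, after the shift $i=v-pS$, exactly the paper's window $|i|\le pS/2$). On that block both of you bound each term by Corollary~\ref{MajorBoot:binomuppercor}, minimise the resulting quadratic to obtain the exponent $2pS^2/((1-p)(2N-T))$, and multiply by the block length $pS$ to get the first term. The paper, however, controls the two tails by computing the exact ratio $\psi(i)=\phi(i{+}1)/\phi(i)$ of successive summands, showing it is monotone and bounding $\psi(\pm pS/2)$ away from $1$ by $S/(3(1-p)N)$; this yields geometric tails summing to $\frac{3(1-p)N}{S}\phi(\pm pS/2)$, and a further application of Corollary~\ref{MajorBoot:binomuppercor} at $i=\pm pS/2$ gives the second term. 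You instead bound one factor by the binomial mode $\sim(2\pi\sigma^2)^{-1/2}$ and sum the other with Proposition~\ref{MajorBoot:binomcumupper} at deviation $3h/4$, which produces the same exponent $9pS^2/(8(1-p)N)$ and a slightly smaller prefactor. Your route is cleaner in that it avoids the explicit rational-function analysis of $\psi$, at the cost of invoking the extra tool Proposition~\ref{MajorBoot:binomcumupper} and the mode estimate; the paper's route is more self-contained, needing only the pointwise Corollary~\ref{MajorBoot:binomuppercor} plus elementary ratio manipulations.

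One bookkeeping remark: in your tail with $X_2$ large the variance is $p(1-p)(N+S-T)$, which can exceed $p(1-p)N$, so the replacement of $N+S-T$ by $N$ in the exponent is not monotone. It is still harmless, because the discrepancy in the exponent is $O\!\left(\frac{pS^2(S-T)}{(1-p)N^2}\right)=O\!\left(\frac{(pS)^3}{p^2(1-p)N^2}\right)=o(1)$ under the hypothesis $pS=o((p(1-p)N)^{2/3})$, and this $e^{o(1)}$ is then absorbed into the generous prefactor $3/(\pi pS)$ --- not, as your last sentence suggests, because of the constant $9/8$ itself.
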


\begin{proof}
Let $\phi(i)$ be the probability that $Z_1=Z_2+pT=pN+i$, then 
$$\phi(i)=\binom{N-S}{pN+i}\binom{N+S-T}{pN-pT+i}p^{p(2N-T)+2i}(1-p)^{(1-p)(2N-T)-2i}.$$

\ind Denote the ratio between successive values of $\phi(i)$ by $\psi(i)$. We obtain
\begin{align*}
\psi(i)&=\frac{\phi(i+1)}{\phi(i)}=\frac{p^2((1-p)N-S-i)((1-p)(N-T)+S-i)}{(1-p)^2(pN+i+1)(p(N-T)+i+1)}.
\end{align*}
Hence, we get 
\begin{align}\label{psikvot}
\psi(i)&=\frac{\left(1-\frac{S+i}{(1-p)N}\right)\left(1+\frac{S-i}{(1-p)(N-T)}\right)}{\left(1+\frac{i+1}{pN}\right)\left(1+\frac{i+1}{p(N-T)}\right)},
\end{align}
and so $\psi$ is a decreasing function of $i$. By noting that $e^{x-x^2}\leq(1+x)\leq e^x$, for $x\geq -\frac{1}{2}$, we can bound $\psi$ for $i=o(p(1-p)N)$ (when $ N $ is large enough).
We apply $ e^{x-x^2}\leq(1+x) $ for the terms in the 
numerator of (\ref{psikvot}) and $ \leq(1+x)\leq e^x$ for the terms in the denominator of (\ref{psikvot}) to get the following lower bound of $ \psi $
\begin{align*}
\exp\left(\frac{pST-(2N-T)(i+1-p)}{p(1-p)N(N-T)}-\left(\frac{S+i}{(1-p)N}\right)^2-\left(\frac{S-i}{(1-p)(N-T)}\right)^2\right).
\end{align*}
and we apply $ \leq(1+x)\leq e^x$ for the terms in the 
numerator of (\ref{psikvot}) and  $ e^{x-x^2}\leq(1+x) $ for the terms in the denominator of (\ref{psikvot}) to get the following upper bound of $ \psi $
\begin{align*}\exp\left(\frac{pST-(2N-T)(i+1-p)}{p(1-p)N(N-T)}+\left(\frac{i+1}{pN}\right)^2+\left(\frac{i+1}{p(N-T)}\right)^2\right).
\end{align*}

\ind Substituting in $i=\pm\frac{pS}{2}$, we get (for $ N $ large enough) that 
\begin{align}\label{boundpsi1}
\psi(\frac{pS}{2})&<\exp\left(-\left(\frac{(2N-3T)S}{2(1-p)N(N-T)}\right)(1+o(1))\right)\nonumber\\
&<\exp\left(-\frac{(2N-3T)S}{4(1-p)N(N-T)}\right)\nonumber\\
&<1-\frac{S}{3(1-p)N}
\end{align}
and 
\begin{align}\label{boundpsi2}
\psi(-\frac{pS}{2})&>\exp\left(\left(\frac{(2N+T)S}{2(1-p)N(N-T)}\right)(1+o(1))\right)\nonumber\\
&>\exp\left(\frac{(2N+t)S}{4(1-p)N(N-T)}\right)\nonumber\\
&>1+\frac{S}{3(1-p)N}.
\end{align}
Therefore $\psi$ is greater than $1$ at $i=pN-\frac{pS}{2}$ and less than $1$ at $i=pN+\frac{pS}{2}$. Consequently (for $ N $ large enough), the maximum value of $\phi$ occurs between these two values.

\ind We have that
$$\phi(i)=\mathbb{P}(Z_1=\mu_1+pS+i)\mathbb{P}(Z_2^\prime=\mu_2^\prime+pS-i),$$
where  $$Z_2^\prime=N+S-T-Z_2=B(N+S-T,(1-p)),$$ with mean $\mu_2^\prime$ and variance $(\sigma_2^\prime)^2$. By Corollary~\ref{MajorBoot:binomuppercor} we get that 
$$\phi(i)<\frac{1}{2\pi \sigma_1\sigma_2^\prime}\exp\left(-\frac{(pS+i)^2(N+S-T)+(pS-i)^2(N-S)}{2p(1-p)(N-S)(N+S-T)}+o(1)\right),$$
for $|i|\leq\frac{pS}{2}$. This is maximized when $i=\frac{pST-2pS^2}{2N-T}$ and there takes the value
\begin{align*}
&\frac{1}{2\pi p(1-p)N}\exp\left(-\frac{pS^2((2N-T)^2-(T-2S)^2)}{2(1-p)(N-S)(N+S-T)(2N-T)}+o(1)\right)\\
&=\frac{1}{2\pi p(1-p)N}\exp\left(-\frac{2pS^2}{(1-p)(2N-T)}+o(1)\right).
\end{align*}

\ind We also obtain the bounds (for $ N $ large enough)
\begin{align*}
\phi(\frac{pS}{2})&<\frac{1}{2p(1-p)\pi N}\exp\left(-\frac{pS^2(10N+8S-9T)}{8(1-p)(N-S)(N+S-T)}+o(1)\right)\\
&<\frac{1}{2p(1-p)\pi N}\exp\left(-\frac{9pS^2}{8(1-p)N}\right)
\end{align*}
and
\begin{align*}
\phi(\frac{-pS}{2})&<\frac{1}{2p(1-p)\pi N}\exp\left(-\frac{pS^2(10N-8S-T)}{8(1-p)(N-S)(N+S-T)}+o(1)\right)\\
&<\frac{1}{2p(1-p)\pi N}\exp\left(-\frac{9pS^2}{8(1-p)N}\right).
\end{align*}

\ind Putting this all together and applying (\ref{boundpsi1}) and  (\ref{boundpsi2}), we obtain (for $ N $ large enough)
\begin{align*}
\mathbb{P}(Z_1=Z_2=pT)&<pS\max_i\phi(i)+\frac{1}{1-\psi(\frac{pS}{2})}\phi(\frac{pS}{2})+\frac{\psi(\frac{-pS}{2})}{\psi(\frac{-pS}{2})-1}\phi(\frac{-pS}{2})\\
&<\frac{S}{2\pi(1-p)N}\exp\left(-\frac{2pS^2}{(1-p)(2N-T)}+o(1)\right)\\
& {} +\frac{3}{p\pi S}\exp\left(-\frac{9pS^2}{8(1-p)N}\right).
\end{align*} 

\end{proof}

\ind We end with some propositions about the number of edges in and between sets in $G(n,p)$.

\begin{prop}
\label{MajorBoot:edgesinlarge}
Suppose that $p(1-p)n=\omega(n)$. If $n$ is large enough, then for all $t>\frac{n}{5},$ we have that with probability at least $1-4^{-t}$ every set in $G(n,p)$ of size $t$ has at most $p\binom{t}{2}+2t\sqrt{p(1-p)t}$ edges.
\end{prop}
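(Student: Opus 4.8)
The plan is to take a union bound over the $\binom nt$ vertex sets of size $t$, estimating each one with Bernstein's inequality (Lemma~\ref{MajorBoot:Bernstein}). Note that the sharp tail bound of Corollary~\ref{MajorBoot:binomcumupper} is unavailable here: the edge count of a size-$t$ set is $B(\binom t2,p)$, and the deviation $2t\sqrt{p(1-p)t}$ we aim for is of order $\sqrt t$ standard deviations, hence larger than $(p(1-p)\binom t2)^{2/3}$ in order of magnitude once $t>n/5$. Bernstein's cruder estimate is exactly what is needed, since $\sqrt t$ standard deviations produces an exponential that (just) beats $\binom nt$ in this range.

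First I would fix a set $S$ with $|S|=t$ and write $e(S)-p\binom t2=\sum_f Y_f$, the sum over the $\binom t2$ pairs inside $S$, where $Y_f=\mathbf 1(f\in E(G))-p$ are independent, mean zero, $|Y_f|\le 1$, with $\mathbb E(Y_f^2)=p(1-p)$. Applying Lemma~\ref{MajorBoot:Bernstein} with $M=1$ and deviation $h=2t\sqrt{p(1-p)t}$ gives
\[
\mathbb P\!\left(e(S)>p\binom t2+2t\sqrt{p(1-p)t}\right)\le\exp\!\left(-\frac{4t^3p(1-p)}{p(1-p)t(t-1)+\tfrac43 t\sqrt{p(1-p)t}}\right).
\]
Since $t>n/5$ and $p(1-p)n=\omega(n)$ we have $p(1-p)t\to\infty$ uniformly over this range, so $t\sqrt{p(1-p)t}=o(p(1-p)t^2)$, the denominator equals $p(1-p)t(t-1)(1+o(1))$, and the exponent is $-4t(1+o(1))$, again uniformly in $t>n/5$.

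Then I would take the union bound. Using $\binom nt\le (en/t)^t\le(5e)^t$ for $t>n/5$, the probability that some size-$t$ set has more than $p\binom t2+2t\sqrt{p(1-p)t}$ edges is at most
\[
(5e)^t\exp\big(-4t(1+o(1))\big)=\exp\big((1+\log 5-4+o(1))\,t\big).
\]
Since $1+\log 5-4<-\log 4$ (equivalently $5e<e^4/4$, i.e.\ $e^3>20$) and the error term is uniform in $t$, once $n$ exceeds a fixed threshold this is at most $4^{-t}$ for every $t>n/5$ simultaneously, which is the claim.

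The only genuinely delicate point is the numerology: the slack in $5e<e^4/4$ is only about $3-\log 20\approx 0.004$, so I would need to make sure every $o(1)$ in the Bernstein exponent is uniform over $t>n/5$ — which it is, since it comes only from $(t-1)/t$ and from $\tfrac1{\sqrt{tp(1-p)}}\le\sqrt5/\sqrt{np(1-p)}\to0$ — so that one threshold $n_0$ works for all admissible $t$ at once. Everything else is the routine translation of "edges inside $S$" into a sum of bounded independent variables.
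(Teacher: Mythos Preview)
Your proof is correct and is essentially identical to the paper's: both apply Bernstein's inequality (Lemma~\ref{MajorBoot:Bernstein}) to $B(\binom t2,p)$ and combine with the union bound $\binom nt\le(5e)^t$, reducing to the numerical check $5e<e^4/4$. The paper handles the $o(1)$ by bounding the denominator by $1.001\,p(1-p)t^2$ rather than writing $(1+o(1))$, but the content and the (tight) margin are the same; your explicit remark on uniformity in $t$ is a welcome addition.
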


\begin{proof}
The expected number of sets of size $t$ with more than   $$p\binom{t}{2}+2t\sqrt{p(1-p)t}$$ edges is
$$\binom{n}{t}\mathbb{P}\left(B\left(\binom{t}{2},p\right)\geq p\binom{t}{2}+2t\sqrt{p(1-p)t}\right).$$

\ind By Lemma~\ref{MajorBoot:Bernstein} and the fact that $\binom{n}{t}\leq (\frac{en}{t})^t$, this expectation is at most
$$(5e)^t\exp\left(-\frac{4p(1-p)t^3}{2p(1-p)\binom{t}{2}+\frac{4t\sqrt{p(1-p)t}}{3}}\right).$$

\ind As $\sqrt{p(1-p)t}=\omega(n)$, we have that if $n$ is large enough, then for all $t>\frac{n}{5}$ we have
$$2p(1-p)\binom{t}{2}+\frac{4t\sqrt{p(1-p)t}}{3}\leq 1.001p(1-p)t^2.$$

\ind Substituting this in we have that the expected number of sets of size $t$ with more than $p\binom{t}{2}+2t\sqrt{p(1-p)t}$ edges is (for $ n $ large enough) at most,
$$\exp\left(t(\log 5+1)-\frac{4p(1-p)t^3}{1.001 p(1-p)t^2}\right)<4^{-t}.$$

\end{proof}









\begin{prop}
\label{MajorBoot:edgesbetweenlarge}
Suppose that $p(1-p)n=\omega(n)$. If $n$ is large enough then for all $t$ in the range $\frac{n}{5}<t\leq\frac{n}{2},$ we have that with probability at least $1-4^{-t}$ every set in $G(n,p)$ of size $t$ has at least $pt(n-t)-3t\sqrt{p(1-p)(n-t)}$ edges between it and its complement.
\end{prop}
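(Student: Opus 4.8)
The plan is to mirror the proof of Proposition~\ref{MajorBoot:edgesinlarge}: bound the expected number of size-$t$ sets that fail the claimed inequality by a union bound, using Bernstein's inequality (Lemma~\ref{MajorBoot:Bernstein}) for the \emph{lower} tail of the relevant binomial, and then check that this expectation is below $4^{-t}$, so that Markov's inequality finishes the job.

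First I would fix a set $S$ with $|S|=t$. The $t(n-t)$ potential edges between $S$ and $[n]\setminus S$ are present independently with probability $p$, so the number $e(S,S^c)$ of such edges is distributed as $B(t(n-t),p)$, with mean $pt(n-t)$ and variance $p(1-p)t(n-t)$. Writing $e(S,S^c)=pt(n-t)-\sum_e X_e$ with centred indicators $X_e$ (so $|X_e|\le 1$ and $\sum_e\mathbb{E}(X_e^2)=p(1-p)t(n-t)$) and applying Lemma~\ref{MajorBoot:Bernstein} to $-X_e$ with deviation $h:=3t\sqrt{p(1-p)(n-t)}$, I obtain
$$\mathbb{P}\Big(e(S,S^c)\le pt(n-t)-3t\sqrt{p(1-p)(n-t)}\Big)\le\exp\!\left(-\frac{9t^2p(1-p)(n-t)}{2p(1-p)t(n-t)+2t\sqrt{p(1-p)(n-t)}}\right).$$
Since $t\le n/2$ gives $n-t\ge n/2$, the hypothesis $p(1-p)n=\omega(n)$ yields $p(1-p)(n-t)\to\infty$, so $\sqrt{p(1-p)(n-t)}=o\big((n-t)p(1-p)\big)$ and the second term of the denominator is negligible next to the first; hence for $n$ large the right-hand side is at most $\exp(-4.4\,t)$.

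Then I would take a union bound over the $\binom{n}{t}\le(en/t)^t\le(5e)^t$ sets of size $t$ (using $t>n/5$ for the last inequality), so that the expected number of size-$t$ sets with fewer than $pt(n-t)-3t\sqrt{p(1-p)(n-t)}$ crossing edges is at most
$$(5e)^t\exp(-4.4\,t)=\exp\big(t(1+\log 5-4.4)\big)<\exp(-t\log 4)=4^{-t},$$
since $1+\log 5<2.61<4.4-\log 4$. Markov's inequality then gives the claim for each such $t$.

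The only points needing care are (i) that Lemma~\ref{MajorBoot:Bernstein} is stated for an upper tail, so one applies it to the negated variables to control the lower tail of $e(S,S^c)$, and (ii) that the additive term $\tfrac23 M h$ in Bernstein's denominator is asymptotically dominated by the variance term — this is precisely where $p(1-p)n=\omega(n)$ together with $n-t\ge n/2$ enters. Both are routine; the rest is identical bookkeeping to Proposition~\ref{MajorBoot:edgesinlarge}.
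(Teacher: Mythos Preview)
Your argument is correct and follows essentially the same route as the paper: a union bound over $\binom{n}{t}\le(5e)^t$ sets combined with Bernstein's inequality for the deviation $3t\sqrt{p(1-p)(n-t)}$, using $p(1-p)(n-t)\to\infty$ to absorb the $\tfrac23Mh$ term. The only cosmetic difference is that the paper converts the lower-tail event into an upper-tail event for $B(t(n-t),1-p)$ before invoking Lemma~\ref{MajorBoot:Bernstein}, whereas you negate the centred indicators; the resulting exponent (the paper uses $-4t$, you use $-4.4t$) and the final comparison with $4^{-t}$ are the same in spirit.
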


\begin{proof}
The expected number of sets $T$ of size $t$ with less than $pt(n-t)-3t\sqrt{p(1-p)(n-t)}$ edges between $T$ and $[n]\setminus T$ is
$$\binom{n}{t}\mathbb{P}\left(B(t(n-t),(1-p))\geq (1-p)t(n-t)+3t\sqrt{p(1-p)(n-t)}\right).$$

\ind By Lemma~\ref{MajorBoot:Bernstein} and the fact that $\binom{n}{t}\leq (\frac{en}{t})^t$, this expectation is at most
$$(5e)^t\exp\left(-\frac{9p(1-p)t^2(n-t)}{2p(1-p)t(n-t)+2t\sqrt{p(1-p)(n-t)}}\right).$$

\ind As $\sqrt{(n-t)p(1-p)}=\omega(n)$, we have that if $n$ is large enough, then for all $t$ in the range $\frac{n}{5}<t\leq\frac{n}{2}$,
$$2p(1-p)t(n-t)+2t\sqrt{p(1-p)(n-t)}\leq\frac{9}{4}p(1-p)t(n-t).$$

\ind Substituting this in we have that the expected number of sets $T$ with a small number of edges between $T$ and $[n]\setminus T$ is (for $ n $ large enough) less than
$$\exp\left(t(\log 5+1)-4t\right)<4^{-t}.$$
\end{proof}

\begin{prop}
\label{MajorBoot:edgesbetweensmall}
Suppose that $p(1-p)n\geq 4\log n$. If $n$ is large enough, then for all $t\leq \frac{n}{5}$ we have that with probability at least $1-n^{-\frac{t}{120}}$, for every set $T$ in $G(n,p)$ of size $t$ there are at least twice as many edges between $T$ and $[n]\setminus T$ as there are in $T$.
\end{prop}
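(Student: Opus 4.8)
The plan is a first--moment (union bound) argument over the $\binom nt$ choices of a $t$-set. Fix $t$ with $1\le t\le n/5$ and a set $T\subset[n]$ with $|T|=t$; write $a=e(T)$ for the number of edges inside $T$ and $b=e(T,[n]\setminus T)$ for the number of edges between $T$ and its complement, so that $a$ and $b$ are independent with $a\sim B\!\left(\binom t2,p\right)$ and $b\sim B(t(n-t),p)$, and put $\bar a=p\binom t2$, $\bar b=pt(n-t)$. The bad event is $B_T=\{b\le 2a-1\}$, and since $\binom nt\le(en/t)^t$ it is enough to show
$$-\log\mathbb P(B_T)\ \ge\ t\log\tfrac{en}{t}+\tfrac{1}{120}\,t\log n .$$
Two elementary consequences of the hypotheses will be used throughout: first, $t\le n/5$ forces $\bar a=\tfrac12 pt(t-1)\le\tfrac18\,pt(n-t)=\tfrac18\bar b$, so the internal edges are, in expectation, a negligible fraction of the crossing ones; second, $pn\ge p(1-p)n\ge4\log n$ gives $\bar b=pt(n-t)\ge\tfrac45 ptn\ge\tfrac{16}{5}\,t\log n$, the ``budget'' against which the union bound is paid.

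To bound $\mathbb P(B_T)$ I would condition on $a$ (which is independent of $b$), using $\mathbb P(a=k)\le\binom{\binom t2}{k}p^k\le\bar a^{\,k}/k!$ and the Chernoff lower-tail bound $\mathbb P(b\le m)\le e^{-\bar b}(e\bar b/m)^{m}$, valid for $1\le m\le\bar b$:
$$\mathbb P(B_T)=\sum_{k\ge 1}\mathbb P(a=k)\,\mathbb P(b\le 2k-1)\ \le\ \sum_{1\le k\le(\bar b+1)/2}\frac{\bar a^{\,k}}{k!}\,e^{-\bar b}\Big(\frac{e\bar b}{2k-1}\Big)^{2k-1}+\sum_{k>(\bar b+1)/2}\frac{\bar a^{\,k}}{k!}\, .$$
The decisive feature is that the $k=0$ term is absent, because $b\ge0$. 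By the first fact above the second sum is a deep Poisson tail (it begins past $4\bar a$), hence at most $2(e/4)^{\bar b/2}$, which is negligible next to the first sum for every admissible $\rho:=\bar a/\bar b=(t-1)/2(n-t)\in(0,\tfrac18]$. In the first sum, Stirling's inequality $k!\ge(k/e)^k$ bounds the $k$-th term by $e^{-\bar b}\big(e^{3}\bar a\bar b^{2}/4k^{3}\big)^{k}$ up to a factor $e^{o(\bar b)}$ (the Stirling $\sqrt k$-corrections together with the at most $\bar b$ terms of the sum cost only $e^{o(\bar b)}$); this is maximised at $k=(\bar a\bar b^{2}/4)^{1/3}$, where it equals $\exp\!\big(3(\bar a\bar b^{2}/4)^{1/3}\big)=\exp\!\big(3(\rho/4)^{1/3}\bar b\big)$. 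Hence
$$\mathbb P(B_T)\ \le\ \exp\!\Big(-\bar b\,\big(1-3(\rho/4)^{1/3}\big)+o(\bar b)\Big),$$
with genuinely positive exponent since $\rho\le\tfrac18<\tfrac4{27}$.

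Closing the union bound then reduces, after dividing the last display by $t$ and using $\bar b\ge\tfrac{16}{5}t\log n$ and $\log(en/t)\le 1+\log n$, to checking
$$\tfrac{16}{5}\big(1-3(\rho/4)^{1/3}\big)\ \ge\ \frac{\log(en/t)}{\log n}+\tfrac1{120}+o(1)\qquad(1\le t\le n/5),$$
which holds for $n$ large: when $t=o(n)$ the right side tends to $1+\tfrac1{120}$ while $\rho\to0$ sends the left side to $\tfrac{16}{5}$; when $t=\Theta(n)$ the right side is $O(1/\log n)+\tfrac1{120}$ while the left side is at least $\tfrac{16}{5}\big(1-3\cdot32^{-1/3}\big)>\tfrac{16}{5}\cdot\tfrac1{20}$; the intermediate range follows by comparing the concave decreasing left side with the decreasing right side.

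The step I expect to be the real obstacle is obtaining this uniformly over the \emph{whole} range $1\le t\le n/5$. At the small end (already $t=1,2$) one has $\binom nt=n^{(1+o(1))t}$, so one genuinely needs $\mathbb P(B_T)\le n^{-(1+o(1))t}$; this is available only because $\bar a\to0$ makes $B_T$ essentially impossible, which is precisely why a crude split such as $B_T\subseteq\{b<\tfrac12\bar b\}\cup\{a>\tfrac14\bar b\}$ is useless here (the summand $\{b<\tfrac12\bar b\}$ alone has probability $\approx n^{-2t/5}$, far too large after multiplying by $\binom nt$) and why one must keep the factorial weights $\bar a^{\,k}/k!$ and let $k$ run from $1$. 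At the large end ($t$ near $n/5$) the internal edges are no longer negligible ($\rho\approx\tfrac18$), the forced deviation is only a constant factor, the effective exponent $1-3(\rho/4)^{1/3}\approx0.055$ is small, and the constants in the hypothesis $p(1-p)n\ge4\log n$ have to be spent with care — which is presumably where the otherwise mysterious $\tfrac1{120}$ originates. Making the $o(\bar b)$ terms explicit, the Poisson-tail estimate, and the final comparison of two functions of $t$ are all routine.
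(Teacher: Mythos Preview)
Your overall plan---a union bound over the $\binom nt$ sets together with a tail estimate on $\{b\le 2a-1\}$---is the paper's plan too. The difference is in how the tail is bounded. The paper does not condition on $a$: it writes
\[
2a-b-(2\bar a-\bar b)=\sum_{j\le\binom t2}2(Y_j-p)\;+\;\sum_{i\le t(n-t)}\bigl(-(X_i-p)\bigr)
\]
as a single sum of independent centred variables, each bounded in absolute value by $2$, with total variance $p(1-p)t(n+t-2)$, and applies Bernstein's inequality once to get
\[
\mathbb P(B_T)\le\exp\!\Bigl(-\frac{(pt(n-2t+1))^2}{2p(1-p)t(n+t-2)+\tfrac43\,pt(n-2t+1)}\Bigr).
\]
A two-case split ($t<n/24$ versus $n/24\le t\le n/5$) then closes the union bound in a few lines; the exponent simplifies to at least $\tfrac{121}{480}\,ptn\ge\tfrac{121}{120}\,t\log n$ in the first case, which is exactly where the $\tfrac1{120}$ comes from. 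Your conditioning-and-optimisation route can be pushed through, but it is considerably longer and the constants are more delicate than the one-shot Bernstein argument.

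There is also a genuine gap in your write-up. Your bound $2(e/4)^{\bar b/2}=\exp(-0.19\,\bar b)$ on the second sum is \emph{not} ``negligible next to the first sum for every admissible $\rho$'': when $\rho\to 0$ your first-sum bound tends to $e^{-\bar b}$, which is far smaller than $e^{-0.19\bar b}$. Using only $\bar b\ge\tfrac{16}5 t\log n$, the second-sum bound yields at best $\mathbb P(B_T)\le n^{-0.62t}$, and for bounded $t$ this does not beat $\binom nt\sim n^t$; so as written the union bound fails to close precisely at the small-$t$ end you yourself flag as the obstacle. The repair is immediate---either note that the genuine range of $a$ is $a\le\binom t2$, so the ``second sum'' is empty whenever $\binom t2<\bar b/2$ (equivalently $t-1<p(n-t)$, which holds for all small $t$), or keep the factor $\bar a=\rho\bar b$ instead of the crude $\bar a\le\bar b/8$ to obtain $(2e\rho)^{\bar b/2}$ in place of $(e/4)^{\bar b/2}$---but the claim as stated is false.
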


\begin{proof}
The expected number of sets $T$ of size $t$ such that there are less than twice as many edges between $T$ and $[n]\setminus T$ as there are in $T$ is
$$\binom{n}{t}\mathbb{P}\left(B\left(t(n-t),p\right)< 2B\left(\binom{t}{2},p\right)\right),$$
for independent random variables $ B\left(t(n-t),p\right) $ and $ B\left(\binom{t}{2},p\right) $.

\ind We can rewrite this as,
$$\binom{n}{t}\mathbb{P}\left(2B\left(\binom{t}{2},p\right)-pt(t{-}1)-B(t(n{-}t),p)+pt(n{-}t)>pt(n{-}2t{+}1)\right).$$

\ind By Lemma~\ref{MajorBoot:Bernstein}, this is at most
\begin{equation}
\label{MajorBoot:edgessmalleqn}
\binom{n}{t}\exp\left(-\frac{(pt(n-2t+1))^2}{2p(1-p)t(n+t-2)+\frac{4pt(n-2t-1)}{3}}\right).
\end{equation}

\ind For $t<\frac{n}{24}$, using the inequality $\binom{n}{t}\leq n^t$ we have that the quantity in (\ref{MajorBoot:edgessmalleqn}) is (for $ n $ large enough) less than
$$n^t\exp\left(-\frac{pt(\frac{11n}{12})^2}{\frac{10n}{3}}\right)<
n^t\exp\left(-\frac{4t \log n}{n}\cdot\frac{121 n}{480}\right)=
n^{-\frac{t}{120}}.$$

\ind For $\frac{n}{24}\leq t\leq \frac{n}{5}$, using the inequality $\binom{n}{t}\leq\left(\frac{en}{t}\right)^t$ we have that the quantity in (\ref{MajorBoot:edgessmalleqn}) is (for $ n $ large enough) less than,
$$\left(\frac{en}{t}\right)^t\exp\left(-\frac{pt(\frac{3n}{5})^2}{\frac{10n}{3}}\right)<\left(\frac{24e}{n^{\frac{2}{5}}}\right)^t<n^{-\frac{t}{120}}.$$

\end{proof}

\begin{cor}
\label{MajorBoot:smallpmediumset}
Suppose that $pn\geq\log n$. If $n$ is large enough, then for all $t$ satisfying $n^{\frac{24}{25}}\leq t\leq\frac{n}{5}$, we have that with probability at least $1-n^{\frac{-t}{120}}$, for every set $T$ in $G(n,p)$ of size $t$, there are at least twice as many edges between $T$ and $[n]\setminus T$ than there are in $T$.
\end{cor}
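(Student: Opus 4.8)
The plan is to reuse the argument of Proposition~\ref{MajorBoot:edgesbetweensmall} essentially verbatim, trading its density hypothesis $p(1-p)n\ge 4\log n$ for the weaker $pn\ge\log n$ and compensating by the restriction $t\ge n^{24/25}$, which keeps the entropy factor $\binom{n}{t}$ only quasi-polynomially large in $t$. Concretely, for a fixed $t$ in the stated range, the expected number of sets $T$ of size $t$ for which there are fewer than twice as many edges between $T$ and $[n]\setminus T$ as inside $T$ is
$$\binom{n}{t}\,\mathbb{P}\!\left(2B\!\left(\tbinom{t}{2},p\right)-pt(t-1)-B(t(n-t),p)+pt(n-t)>pt(n-2t+1)\right),$$
for independent binomials, and by Lemma~\ref{MajorBoot:Bernstein} applied with $M=2$ this is at most the quantity in~(\ref{MajorBoot:edgessmalleqn}), namely
$$\binom{n}{t}\exp\!\left(-\frac{\big(pt(n-2t+1)\big)^2}{2p(1-p)t(n+t-2)+\tfrac{4}{3}pt(n-2t-1)}\right).$$

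The two estimates to carry out are then completely routine. For the exponent, $t\le n/5$ gives $n-2t+1\ge 3n/5$, so the numerator is at least $\tfrac{9}{25}p^2t^2n^2$; while $1-p\le 1$, $n+t-2\le \tfrac{6n}{5}$ and $n-2t-1\le n$ bound the denominator by $\tfrac{12}{5}ptn+\tfrac{4}{3}ptn=\tfrac{56}{15}ptn$. Hence the exponent is at least $\tfrac{27}{280}ptn\ge \tfrac{27}{280}\,t\log n$, the last step using $pn\ge\log n$. For the binomial coefficient, $t\ge n^{24/25}$ gives $\binom{n}{t}\le(en/t)^t\le\big(en^{1/25}\big)^t=\exp\!\big(t(1+\tfrac{1}{25}\log n)\big)$.

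Putting these together, the expected number of such ``bad'' sets of size $t$ is at most
$$\exp\!\left(t\Big(1+\tfrac{1}{25}\log n-\tfrac{27}{280}\log n\Big)\right),$$
and since $\tfrac{1}{25}-\tfrac{27}{280}=-\tfrac{237}{4200}<-\tfrac{35}{4200}=-\tfrac{1}{120}$, this is less than $n^{-t/120}$ for $n$ large enough (so that the additive $1$ is absorbed). Markov's inequality then gives that $G(n,p)$ contains no bad set of size $t$ with probability at least $1-n^{-t/120}$, which is the claim. There is no genuine obstacle here beyond the bookkeeping of constants: one just needs the decay rate $\tfrac{27}{280}\log n$ (produced by $t\le n/5$ in the numerator) to dominate the sum of the entropy rate $\tfrac{1}{25}\log n$ (produced by $t\ge n^{24/25}$) and the target rate $\tfrac{1}{120}\log n$, and the displayed numbers show this holds with comfortable room to spare.
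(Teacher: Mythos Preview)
Your proof is correct and follows essentially the same approach as the paper's: both reuse the expectation bound~(\ref{MajorBoot:edgessmalleqn}) from Proposition~\ref{MajorBoot:edgesbetweensmall}, bound $\binom{n}{t}$ by $(en/t)^t\le (en^{1/25})^t$ using $t\ge n^{24/25}$, and use $pn\ge\log n$ to get a decay rate in the exponent that beats the entropy rate $\tfrac{1}{25}\log n$ plus the target $\tfrac{1}{120}\log n$. The only difference is bookkeeping of constants (the paper bounds the denominator by $\tfrac{10}{3}ptn$ to obtain exponent $\tfrac{27}{250}t\log n$ and hence $(e/n^{17/250})^t$, while you use $\tfrac{56}{15}ptn$ to get $\tfrac{27}{280}t\log n$), and both leave ample room.
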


\begin{proof}
By the exact same reasoning as in Proposition~\ref{MajorBoot:edgesbetweensmall} the expected number of sets $T$ of size $t$ with less than twice as many edges between $T$ and $[n]\setminus T$ than there are in $T$ is (for $ n $ large enough) at most
$$\left(\frac{en}{t}\right)^t\exp\left(-\frac{pt\left(\frac{3n}{5}\right)^2}{\frac{10n}{3}}\right)<\left(\frac{e}{n^{\frac{17}{250}}}\right)^t<n^{-\frac{t}{120}}.$$
\end{proof}

\begin{prop}
\label{MajorBoot:mindegree}
For every fixed $\epsilon>0$ and $p\geq \frac{(1+\epsilon)\log n}{n}$, with high probability, the minimal degree of $G(n,p)$ is greater than $8$.
\end{prop}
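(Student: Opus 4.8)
The plan is a straightforward first-moment (union-bound) argument. The degree of any fixed vertex $v$ in $G(n,p)$ has distribution $B(n-1,p)$, so the probability that $G(n,p)$ has a vertex of degree at most $8$ is at most $n\,\mathbb{P}(B(n-1,p)\le 8)$, and it suffices to show this quantity tends to $0$ under the hypothesis $p\ge (1+\epsilon)\log n/n$.

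To that end I would bound each of the nine terms $\mathbb{P}(B(n-1,p)=k)$, for $0\le k\le 8$, by combining $\binom{n-1}{k}\le (n-1)^k/k!$ with $(1-p)^{n-1-k}\le e^{-p(n-1-k)}\le e^{9}e^{-pn}$ (the last step valid since $k\le 8$ and $p\le 1$). Writing $x=pn$ and noting that $(n-1)p\to\infty$, so that $((n-1)p)^k/k!\le x^{8}$ for every $0\le k\le 8$ once $n$ is large, summation over $k$ yields $\mathbb{P}(B(n-1,p)\le 8)\le 9e^{9}x^{8}e^{-x}$. Since $x\mapsto x^{8}e^{-x}$ is decreasing on $[8,\infty)$ and $x=pn\ge (1+\epsilon)\log n\ge 8$ for large $n$, one gets $x^{8}e^{-x}\le ((1+\epsilon)\log n)^{8}n^{-(1+\epsilon)}$, hence $n\,\mathbb{P}(B(n-1,p)\le 8)\le 9e^{9}((1+\epsilon)\log n)^{8}n^{-\epsilon}\to 0$, as required.

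The one point to be careful about is that the estimate must be uniform over all admissible $p$, i.e.\ over the whole range $(1+\epsilon)\log n/n\le p<1$, not merely at the threshold value; this is exactly what the monotonicity of $x^{8}e^{-x}$ on $[8,\infty)$ buys us, reducing the problem to the worst case $x=(1+\epsilon)\log n$. (When $p$ is bounded away from the threshold, or tends to $1$, the same bound is only stronger, and for $p=1$ the graph is complete and the claim is trivial.) The remaining manipulations are routine, so I do not anticipate any genuine obstacle.
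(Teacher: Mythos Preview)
Your argument is correct and follows essentially the same first-moment (union-bound) strategy as the paper: bound $n\,\mathbb{P}(B(n-1,p)\le 8)$, crudely estimate the nine binomial point probabilities, and then use monotonicity in $p$ (equivalently in $x=pn$) to reduce to the threshold value $p=(1+\epsilon)\log n/n$. The only cosmetic differences are that the paper bounds the tail by $9\,\mathbb{P}(B(n-1,p)=8)$ via a ratio argument and checks the maximum of $p^{8}(1-p)^{n-9}$ directly, whereas you bound each term by $x^{8}e^{-x}$ and invoke the monotonicity of $x\mapsto x^{8}e^{-x}$ on $[8,\infty)$; both routes yield the same $(\log n)^{8}n^{-\epsilon}$-type estimate.
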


\begin{proof}
The expected number of vertices with degree at most $8$ is bounded by
\begin{align}
\label{MajorBoot:mindegbound}
&n\mathbb{P}(B(n-1,p)\leq 8)=n\sum_{i=0}^{8}\binom{n-1}{i}p^i(1-p)^{n-1-i}\nonumber\\
&\leq n\left(\binom{n-1}{8}p^8(1-p)^{n-9}\left(1+\frac{9(1-p)}{p(n-9)}+\left(\frac{9(1-p)}{p(n-9)}\right)^2+\ldots\right)\right)\nonumber\\
&\leq\frac{9n^{9}}{8!}p^8(1-p)^{n-9}.
\end{align}

\ind These inequalities follow as  $\max_{i\leq 8}\mathbb{P}(B(n-1,p)=i)$ occurs (for $ n $ large enough) when $i=8,$ and so $\mathbb{P}(B(n-1,p)\leq 8)\leq9\mathbb{P}(B(n-1,p)=8)$. The last line of (\ref{MajorBoot:mindegbound}) is maximised over $0\leq p\leq 1$ when $\frac{p}{8}=\frac{1-p}{n-9}$, that is when $p=\frac{8}{n-1}$. So for $p$ in our range, (\ref{MajorBoot:mindegbound}) is maximised when $p=\frac{(1+\epsilon)\log n}{n}$. Therefore (for $ n $ large enough)
\begin{align*}
n\mathbb{P}(B(n-1,p)\leq 8)&\leq \frac{9n^9(1+\epsilon)^8(\log n)^8}{8!n^8}e^{-\frac{(n-9)(1+\epsilon)\log n}{n}}\\
&\leq \frac{(\log n)^8}{n^{\frac{\epsilon}{2}}}.
\end{align*} 

\end{proof}

\begin{prop}
\label{MajorBoot:edgesinsmall}
Suppose that $(1+\epsilon)\log n\leq pn\leq 5\log n$. If $n$ is large enough, then for all $t$ satisfying $t\leq n^{\frac{29}{30}}$, we have that with probability at least $1-n^{-\frac{t}{120}}$, every set in $G(n,p)$ of size $t$ has at most $2t$ edges.
\end{prop}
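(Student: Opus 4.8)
\ind The plan is a routine first-moment (union bound) argument, and it uses only the upper bound $pn\le 5\log n$; the lower bound $(1+\epsilon)\log n\le pn$ plays no role in this statement (it is included to match the regime in which the proposition is applied in Lemma~\ref{MajorBoot:nolargenasty}). Fix $t\le n^{29/30}$. Since the number of edges inside a fixed $t$-subset of $[n]$ is distributed as $B\bigl(\binom{t}{2},p\bigr)$, the expected number of $t$-sets spanning more than $2t$ edges is
\[
X_t=\binom{n}{t}\,\mathbb{P}\!\left(B\!\left(\binom{t}{2},p\right)\ge 2t+1\right),
\]
and by Markov's inequality it suffices to prove $X_t<n^{-t/120}$ for $n$ large.

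\ind For the binomial tail I would deliberately avoid Bernstein's inequality (Lemma~\ref{MajorBoot:Bernstein}): here the mean $p\binom{t}{2}$ is far smaller than $2t$, so we are deep in a large-deviation regime, and a crude union bound over the possibly-present edges is both simpler and sharper. The event $\{B(\binom{t}{2},p)\ge 2t\}$ is contained in the union, over all $2t$-subsets $A$ of the $\binom{t}{2}$ potential edges, of the event that every pair in $A$ is present, so
\[
\mathbb{P}\!\left(B\!\left(\binom{t}{2},p\right)\ge 2t+1\right)\le\binom{\binom{t}{2}}{2t}p^{2t}\le\left(\frac{e\binom{t}{2}}{2t}\right)^{2t}p^{2t}<\left(\frac{etp}{4}\right)^{2t},
\]
using $\binom{t}{2}<t^{2}/2$. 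Combining with $\binom{n}{t}\le(en/t)^{t}$ gives $X_t<\bigl(e^{3}ntp^{2}/16\bigr)^{t}$.

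\ind Now I would feed in the hypothesis $pn\le 5\log n$, so that $np^{2}\le 25(\log n)^{2}/n$ and hence $ntp^{2}\le 25\,t(\log n)^{2}/n$; since $t\le n^{29/30}$ this is at most $25\,n^{-1/30}(\log n)^{2}$, i.e.\ $e^{3}ntp^{2}/16\le n^{-1/30+o(1)}$. Raising to the $t$-th power, $X_t\le n^{(-1/30+o(1))t}$, and for $n$ large enough the $o(1)$ term is below $\tfrac{1}{30}-\tfrac{1}{120}=\tfrac{1}{40}$, which yields $X_t<n^{-t/120}$ as required.

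\ind I do not expect a genuine obstacle, but two points need a little care. First, one must use the edge-union bound rather than a concentration inequality: for small $t$ a Bernstein-type bound $\exp(-\Theta(t))$ cannot be absorbed by $\binom{n}{t}$, whereas the $p^{2t}\approx(\log n/n)^{2t}$ decay comfortably beats $(en/t)^{t}$ even after using $\binom{n}{t}\le(en/t)^{t}$ (the cruder $\binom{n}{t}\le n^{t}$ is too lossy). Second, one must bookkeep the constants so that the exponent $1/30$ genuinely dominates the target $1/120$ once $n$ is large; any $t=o\!\bigl(n/(\log n)^{2}\bigr)$ would suffice for a qualitative version, and $n^{29/30}$ is simply a convenient choice inside that range.
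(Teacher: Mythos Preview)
Your argument is correct and essentially the same as the paper's: both use a first-moment/union bound with $\binom{n}{t}\le(en/t)^t$ and the leading-term estimate $\binom{\binom{t}{2}}{2t}p^{2t}$ for the binomial tail, arriving at the same $(C\,t(\log n)^2/n)^t\le n^{-t/30+o(1)}$ bound. The only cosmetic difference is that the paper keeps the $(1-p)^{\binom{t}{2}-2t}$ factor and bounds the full tail by twice its first term via the ratio $pt<\tfrac12$, whereas you drop that factor via the edge-union bound; the resulting estimates differ by a harmless constant.
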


\begin{proof}
The expected number of sets $T$ in $G(n,p)$ of size $t$ with at least $2t$ edges is
\begin{equation}
\label{MajorBoot:smallpedgebound}
\binom{n}{t}\mathbb{P}\left(B\left(\binom{t}{2},p\right)\geq 2t\right)=\binom{n}{t}\sum_{i=2t}^n\binom{\binom{t}{2}}{i}p^i(1-p)^{\binom{t}{2}-i}.
\end{equation}

\ind By carefully bounding the summands in (\ref{MajorBoot:smallpedgebound}) for $i=2t$ and $i=2t+1,$ we shall get a good bound on the total sum. We have that
$$\binom{\binom{t}{2}}{2t}p^{2t}(1-p)^{\binom{t}{2}-2t}<\left(\frac{ep(t-1)}{4}\right)^{2t}<\left(\frac{5et\log n}{4n}\right)^{2t}.$$

\ind We also get
$$\frac{\binom{\binom{t}{2}}{2t+1}p^{2t+1}(1-p)^{\binom{t}{2}-2t-1}}{\binom{\binom{t}{2}}{2t}p^{2t}(1-p)^{\binom{t}{2}-2t}}=\frac{p(\binom{t}{2}-2t)}{(1-p)(2t+1)}\leq pt<\frac{1}{2}.$$

\ind Because the ratio between consecutive terms in the sum in (\ref{MajorBoot:smallpedgebound}) decreases as $ i $ increases, we have from above that the total sum is at most twice the first term, therefore 
\begin{align*}
\binom{n}{t}\mathbb{P}\left(B\left(\binom{t}{2},p\right)\geq 2t\right)&\leq\binom{n}{t}2\left(\frac{5et\log n}{4n}\right)^{2t}\\
&\leq 2\frac{e^{3t}25^t(\log n)^{2t}t^t}{16^tn^t}\\
&\leq \left(\frac{C(\log n)^2}{n^{\frac{1}{30}}}\right)^t,
\end{align*}
and so the expected number of set $T$ in $G(n,p)$ of size $t$ with at least $2t$ edges is (for $ n $ large enough) at most $n^{-\frac{t}{120}}.$  
\end{proof}

\textbf{Acknowledgement.} We would like to express our gratitude to B. Bollob\'as and R. Morris who introduced us to the problem.

\bibliography{MajorBootbib}
\bibliographystyle{amsplain}

\end{document}